\documentclass[11pt]{amsart}
\usepackage{palatino}
\usepackage{amsfonts}
\usepackage{amssymb}
\usepackage{amscd}

\setlength{\parindent}{.4 in}
\setlength{\textwidth}{5.8 in}
\setlength{\topmargin} {-.3 in}
\setlength{\evensidemargin}{0 in}
\theoremstyle{plain}

\newtheorem{theorem}{Theorem}[section]
\newtheorem{proposition}[theorem]{Proposition}
\newtheorem{corollary}[theorem]{Corollary}
\newtheorem{lemma}[theorem]{Lemma}
\theoremstyle{definition}
\newtheorem{definition}[theorem]{Definition}
\newtheorem{remark}[theorem]{Remark}
\newtheorem{example}[theorem]{Example}

\newtheorem{conjecture}[theorem]{Conjecture}
\newtheorem{conjecture/question}[theorem]{Conjecture/Question}
\newtheorem{question}[theorem]{Question}
\newtheorem{remark/definition}[theorem]{Remark/Definition}
\newtheorem{terminology/notation}[theorem]{Terminology/Notation}
\newtheorem{problem}[theorem]{Problem}
\setlength{\oddsidemargin}{0 in}
\setlength{\footskip}{.3 in}
\setlength{\headheight}{.3 in}
\setlength{\textheight}{8.5 in}
\setlength{\parskip}{.1 in}

\newcommand{\marginlabel}[1]%
  {\mbox{}\marginpar{\raggedleft\hspace{0pt}\bfseries\sf#1}}

\def\rmapdown#1{\Big\downarrow
   \rlap{$\vcenter{\hbox{$\scriptstyle#1$}}$ }}

\def\GG{{\textbf G}}

\def\PP{{\textbf P}}
\def\OO{\mathcal{O}}

\def\cB{\mathcal{B}}
\def\cA{\mathcal{A}}
\def\F{\mathcal{F}}
\def\P{\mathcal{P}}

\def\E{\mathcal{E}}
\def\G{\mathcal{G}}
\def\K{\mathcal{K}}
\def\L{\mathcal{L}}
\def\I{\mathcal{I}}

\def\cM{\mathcal{M}}
\def\cR{\mathcal{R}}
\def\rr{\overline{\mathcal{R}}}
\def\cZ{\mathcal{Z}}
\def\cU{\mathcal{U}}
\def\cC{\mathcal{C}}
\def\H{\mathcal{H}}
\def\Pic0{{\rm Pic}^0(X)}

\def\mm{\overline{\mathcal{M}}}
\def\KK{\overline{\mathcal{K}}}
\def\zz{\overline{\mathcal{Z}}}
\def\rem{\overline{\textbf{M}}}

\def\pem{\widetilde{\textbf{M}}}
\def\tem{\textbf{{M}}^p}
\def\ttem{\overline{\textbf{M}}^p}

\pagestyle{myheadings}
\theoremstyle{remark}

\begin{document}

\title{Birational aspects of the geometry of $\mm_g$}

\author[G. Farkas]{Gavril Farkas}
\address{Humboldt Universit\"at zu Berlin, Institut f\"ur Mathematik,
10099 Berlin} \email{{\tt farkas@math.hu-berlin.de}}
\thanks{Research  partially supported by an Alfred P. Sloan Fellowship and the NSF Grant DMS-0500747. Work on this paper was started during a
stay at the Mittag-Leffler Institute in Djursholm in 2007.
}

\maketitle

\section{Introduction}
The study of the moduli space $\cM_g$ begins of course with Riemann,
who in 1857 was the first to consider a space whose points correspond to isomorphism
classes of smooth curves of genus $g$. By viewing curves as branched covers of $\PP^1$,
Riemann correctly computed the number of \emph{moduli}, that is he showed that
$$\mbox{dim}(\cM_g)=3g-3$$ for all $g\geq 2$. Riemann is also responsible for the term
moduli, meaning essential parameters for varieties of certain kind: "\emph{... es h\"angt also eine Klasse von
Systemen gleichverzweigter $2p+1$ fach zusammenhangender Funktionen und die zu ihr
geh\"orende Klassen algebraischer Gleichungen von $3p-3$ stetig ver\"anderlichen Gr\"ossen ab,
welche die Moduln dieser Klasse werden sollen}".  The best modern way of
reproving Riemann's result is via Kodaira-Spencer deformation
theory.
The first rigorous construction of $\cM_g$ was carried out by Mumford in
1965, in the book \cite{GIT}. By adapting Grothendieck's "functorial ideology", Mumford,
used Geometric Invariant Theory and developed a purely algebraic approach to study $\cM_g$. He
indicated that one has to study the \emph{coarse moduli scheme} that
is as close as any scheme can be to the \emph{moduli stack} of smooth
curves: Although the coarse moduli scheme exists over
$\mbox{Spec}(\mathbb Z)$, one has to pass to an algebraically closed
field $k$ to get a bijection between $\mbox{Hom}(\mbox{Spec}(k),
\cM_g)$ and isomorphism classes of smooth curves of genus $g$
defined over $k$.

Despite the fact that the rigorous construction of $\cM_g$ was achieved so late, various geometric properties of the space $\cM_g$, whose existence was somehow taken for granted,  have been established.
Hurwitz \cite{Hu} following earlier work of Clebsch, proved in 1891 that $\cM_g$ is irreducible by using Riemann's existence theorem and showing that the
space parameterizing branched covers of $\PP^1$ having fixed degree
and genus is connected. In 1915,  Severi \cite{S} used plane models of curves to prove that when $g\leq 10$ the space $\cM_g$ is unirational.
For $g\leq 10$ (and only in this range), it is possible to
realize a general curve $[C]\in \cM_g$ as a nodal plane curve $\Gamma \subset \PP^2$
 having minimal degree $d=[(2g+8)/3]$, such that the nodes of
$\Gamma$ are general points in $\PP^2$. In the same paper Severi
 conjectures that $\cM_g$ is unirational (or even rational!) for all $g$. This would correspond
to being able to write down the general curve of genus $g$ explicitly, in a family depending
on $3g-3$ \emph{free} parameters. Severi himself and later B. Segre made several attempts to prove the conjecture for $g\geq 11$
using curves of minimal degree in $\PP^r$ with $r\geq 3$, cf. \cite{Seg}, \cite{God}.

Severi's Conjecture seemed plausible and was widely believed until the 1980s:
In \cite{M3} Mumford declares "How rational is the moduli space of curves" to be one of the main problems of
present day mathematics. In "Curves and their Jacobians" \cite{M2} Mumford elaborates: "\emph{Whether more
$\cM_g$'s $g\geq 11$ are unirational or not is a very interesting problem, but one which
looks very hard too, especially if $g$ is quite large}". Probably thinking by analogy with the well-understood case of moduli  of elliptic curves (with level structure), Oort formulates in his 1981 survey \cite{O} a principle that naturally defined moduli spaces should be unirational: "\emph{... generally speaking it seems that coarse moduli spaces tend to be close to rational
varieties while high up in the tower of fine moduli spaces, these varieties possibly are of general
type}".

It came as a major surprise
when in 1982 Harris and Mumford \cite{HM} showed that Severi's Conjecture is maximally wrong in the sense that $\cM_g$ itself rather than a higher level cover is almost always a variety of general type!
\begin{theorem}\label{hme}
For $g\geq 24$, the moduli space of stable curves $\mm_g$ is a variety of general
type.
\end{theorem}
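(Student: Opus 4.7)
The plan is to show that the canonical class $K_{\mm_g}$ is big; combined with the verification (done separately by Harris--Mumford for $g\ge 4$) that the quotient singularities of $\mm_g$ impose no adjoint conditions on pluricanonical forms, this implies $\mm_g$ is of general type. The first task is to pin down $K_{\mm_g}$ explicitly. By Harer together with Arbarello--Cornalba, $\mathrm{Pic}(\mm_g)\otimes\QQ$ is freely generated by the Hodge class $\lambda$ and the boundary classes $\delta_0,\ldots,\delta_{\lfloor g/2\rfloor}$; a Grothendieck--Riemann--Roch computation applied to the universal curve, corrected along $\Delta_1$ for the generic elliptic involution, yields Mumford's formula
$$K_{\mm_g}=13\lambda-2\delta_0-3\delta_1-2\delta_2-\cdots-2\delta_{\lfloor g/2\rfloor},$$
so that the slope of $K_{\mm_g}$ is $s(K_{\mm_g})=13/2$ in the standard slope terminology.

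The geometric heart of the argument is to construct an effective divisor on $\mm_g$ of strictly smaller slope. For odd $g=2k-1$ the natural candidate is the closure $\mm^1_{g,k}\subset\mm_g$ of the Brill--Noether locus of smooth curves admitting a pencil $g^1_k$; it is indeed a divisor since $\rho(g,1,k)=-1$. I would compute $[\mm^1_{g,k}]$ by the test-curve method: choose a handful of well-designed one-parameter families $B\to\mm_g$ (a Lefschetz pencil of curves of the appropriate genus on a rational surface, plus families sweeping out generic points of $\Delta_0$ and of each $\Delta_i$), pair them with $\mm^1_{g,k}$, and enumerate the finitely many limiting pencils on the nodal fibres using the Harris--Mumford theory of admissible covers. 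The resulting class is
$$[\mm^1_{g,k}]=c_{g,k}\left((g+3)\lambda-\frac{g+1}{6}\delta_0-\sum_{i=1}^{\lfloor g/2\rfloor} i(g-i)\,\delta_i\right)$$
for an explicit positive constant $c_{g,k}$, and hence $s([\mm^1_{g,k}])=6+12/(g+1)$, which is strictly less than $13/2$ precisely when $g\ge 24$.

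Granted this class formula, the conclusion is a matter of linear algebra in the Picard group. One seeks a decomposition
$$K_{\mm_g}=\alpha\,[\mm^1_{g,k}]+\beta\,H+\sum_{i=0}^{\lfloor g/2\rfloor}\gamma_i\,\delta_i$$
with $\alpha,\gamma_i\ge 0$ and $\beta>0$, where $H$ is an ample class on $\mm_g$ (any class $a\lambda-\delta$ with $a>11$ works, by Cornalba--Harris). Matching coefficients of $\lambda$ and $\delta_0$ reduces the existence of such a decomposition to an elementary inequality in $g$ that has a positive solution exactly when $s([\mm^1_{g,k}])<s(K_{\mm_g})$, that is when $g\ge 24$; the remaining coefficients $\gamma_i$ are then automatically nonnegative because $i(g-i)$ grows rapidly with $i$. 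Thus $K_{\mm_g}$ is a sum of an ample and an effective class, hence big. For even $g$ with $g+1$ composite, one replaces $\mm^1_{g,k}$ by the Brill--Noether divisor of curves carrying a $g^r_d$ of Brill--Noether number $-1$; the slope formula $6+12/(g+1)$ persists by Eisenbud--Harris, and the argument goes through unchanged.

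The main obstacle is undoubtedly the explicit computation of $[\mm^1_{g,k}]$. This requires (i) a geometrically meaningful compactification, over the whole of $\mm_g$, of the universal Hurwitz space parameterizing degree-$k$ covers $C\to\PP^1$ by smooth genus-$g$ curves, furnished by the Harris--Mumford space of admissible covers, and (ii) delicate enumerations on each test family translating the intersection number $[\mm^1_{g,k}]\cdot B$ into a count of admissible pencils on the nodal fibres of $B$. The coefficient of $\delta_0$ is the most subtle: one must count carefully the ways a $g^1_k$ on the partial normalization of a one-nodal curve can be reglued across the two preimages of the node, which is the combinatorial origin of the characteristic factor $(g+1)/6$ and ultimately of the threshold $g\ge 24$.
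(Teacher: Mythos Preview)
Your outline is the classical Harris--Mumford/Eisenbud--Harris argument and is correct as far as it goes, but there is a genuine gap: you only treat even $g$ when $g+1$ is composite. When $g+1$ is prime (for instance $g=28,30,36,40,\ldots$) there is \emph{no} Brill--Noether divisor with $\rho=-1$, since the equation $\rho(g,r,d)=-1$ forces the factorization $g+1=(r+1)(g-d+r)$. For these genera one must pass to a different effective divisor: Eisenbud--Harris use the Gieseker--Petri locus $\mathcal{GP}^r_{g,d}$ of curves carrying a $\mathfrak g^r_d$ with $\rho=0$ for which the Petri map fails to be injective. Its slope equals $6+12/(g+1)$ plus a small explicit positive correction term that still keeps it below $13/2$ for $g\geq 24$ (see Theorem~\ref{gp} for the exact formula). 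Without this, your argument does not cover all $g\geq 24$.

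By contrast, the paper's own proof (Section~5) for odd $g=2i+3$ replaces the Hurwitz divisor entirely by the Koszul locus $\cZ_{g,i}=\{[C]:K_{i,2}(C,K_C)\neq 0\}$, realized as the degeneracy locus of an explicit morphism $\gamma_{i,2}:\H_{i,2}\to\G_{i,2}$ between vector bundles built from the Hodge bundle over a partial compactification of $\textbf{M}_g$. The class is then read off from intersections with the two standard test curves $C^0\subset\Delta_0$ and $C^1\subset\Delta_1$, yielding the same slope $6+12/(g+1)$ as the Brill--Noether divisor. The trade-off is that the paper avoids admissible covers and the enumerative geometry of the Hurwitz stack altogether, but must invoke Voisin's proof of the generic Green Conjecture to know that $\cZ_{g,i}$ is an actual divisor rather than all of $\cM_g$. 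For even genus the paper, like Eisenbud--Harris, falls back on the Gieseker--Petri divisor. Your route is the historically original one; the paper's is shorter modulo Voisin's deep input.
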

An easy consequence of Theorem \ref{hme} is the following negative result:
\begin{corollary}
For $g\geq 24$, if $[C]\in \cM_g$ is a general curve and $S$ is a surface containing $C$ on which
$C$ moves in a non-trivial linear system, then $S$ is birational to $C\times \PP^1$. A general curve of genus $g\geq 24$ does not occur in any non-trivial linear system on any non-ruled surface.
\end{corollary}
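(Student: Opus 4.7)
My plan is to extract a one-parameter family of curves from the hypothesis and then feed it to Theorem \ref{hme}.

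First I would produce the family. Given that $C$ moves in a non-trivial linear system on $S$, I can pick a pencil $\Lambda \subset |C|_S$ through $[C]$, giving a rational map $S \dashrightarrow \PP^1$. After resolving the base locus by a sequence of blowups, I obtain a birational model $\sigma : \tilde S \to S$ together with a morphism $\pi : \tilde S \to \PP^1$ whose general fibre is smooth of arithmetic genus $g$. Because $C$ is smooth, its proper transform is again isomorphic to $C$, and it appears as one of the fibres of $\pi$. So the smooth fibres of $\pi$ define a morphism $\varphi : U \to \cM_g$ from some Zariski-open $U \subset \PP^1$, extending to a rational map $\PP^1 \dashrightarrow \mm_g$ passing through $[C]$.

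Next I would exploit Theorem \ref{hme}. For $g\geq 24$ the space $\mm_g$ is of general type, hence not uniruled, so rational curves in $\mm_g$ sweep out only a proper (countable union of) subvariety. If $[C]$ is chosen general — more precisely, outside this locus of rational curves — then any rational map $\PP^1 \dashrightarrow \mm_g$ whose image contains $[C]$ must be constant. Applied to our $\varphi$, this forces $\pi : \tilde S \to \PP^1$ to be \emph{isotrivial}: every smooth fibre is abstractly isomorphic to $C$.

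To finish, I would upgrade isotriviality to a birational trivialization. The automorphism group $\mathrm{Aut}(C)$ of a general curve of genus $g \geq 3$ is trivial (a Zariski-open condition on $\cM_g$), so the isotrivial family $\pi|_U$ is a fibre bundle with trivial structure group, i.e.\ $\pi^{-1}(U) \cong C \times U$. Therefore $\tilde S$ is birational to $C\times \PP^1$, and, via $\sigma$, so is $S$. The second statement is then immediate: $C\times \PP^1$ is ruled, so any surface containing a general $C$ in a non-trivial linear system is ruled.

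The main obstacle, and the only place where real content enters, is the passage from "$\mm_g$ is of general type" to "a general point of $\mm_g$ lies on no rational curve". This uses the standard fact that varieties of general type are not uniruled (and, at the desired level of generality, that the locus swept by rational curves is a proper, at worst countable, union of subvarieties); the remaining steps — resolving the pencil, detecting isotriviality, trivializing an isotrivial family with trivial monodromy — are formal.
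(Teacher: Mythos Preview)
Your argument is correct and is precisely the standard deduction the paper has in mind; the paper itself gives no proof, merely labeling the corollary an ``easy consequence'' of Theorem~\ref{hme}. The chain you outline --- pencil $\Rightarrow$ rational curve in $\mm_g$ through $[C]$ $\Rightarrow$ isotriviality (since a variety of general type is not uniruled) $\Rightarrow$ birational triviality (since $\mathrm{Aut}(C)=\{1\}$ for general $[C]$) --- is exactly the intended one, with the only caveat that ``general'' here should be read as ``very general'' to avoid the countable union of rationally-swept loci.
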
  The proof of Theorem \ref{hme} uses
in an essential way the Deligne-Mumford compactification $\mm_g$ by means of stable curves. The key idea is to reduce the problem of producing pluricanonical
forms on $\mm_g$ to a divisor class calculation on $\mm_g$. For instance, in the case $g=2k-1$, Harris and Mumford consider the Hurwitz divisor
$$\cM_{g, k}^1:=\{[C]\in \cM_g: \exists \ \  C\stackrel{k:1}\rightarrow \PP^1\}.$$
By computing the class of the closure $\mm_{g, k}^1$ of $\cM_{g, k}^1$ inside $\mm_g$, it follows that for $g=2k-1\geq 25$, the canonical class $K_{\mm_g}$ lies in the cone spanned inside $\mathrm{Pic}(\mm_g)_{\mathbb Q}$ by $[\mm_{g, k}^1]$, the Hodge class $\lambda\in \mbox{Pic}(\mm_g)$ and the irreducible components of the boundary $\mm_g-\cM_g$.  Since the class $\lambda$ is big, that is, high multiples of $\lambda$ have the maximal number of sections, it follows that high multiples of $K_{\mm_g}$ will also have the maximum number of sections, that is, $\mm_g$ is of general type. The main technical achievement of \cite{HM} is the calculation of the class $[\mm_{g, k}^1]$ via the theory of \emph{admissible coverings}. The case of even genus was initially settled in \cite{H1} for $g=2k-2\geq 40$ and later
 greatly simplified and improved by Eisenbud and Harris \cite{EH3} via the theory of \emph{limit linear series}. In this survey, apart from reviewing the work of Harris, Mumford and Eisenbud, we present a different  proof of
Theorem \ref{hme} by replacing the divisor $\cM_{g, k}^1$ by a Koszul divisor on $\mm_g$ in the spirit of \cite{F3}.
It turns out that modulo Voisin's proof \cite{V2} of the generic Green Conjecture on syzygies of canonical curves, one obtains a very short proof of the Harris-Mumford Theorem \ref{hme}, which does not resort to enumerative calculation on Hurwitz stacks of admissible coverings or to limit linear series.

After \cite{HM} there has been a great deal of work trying to describe the geometry of $\cM_g$ in the intermediate
cases $11\leq g\leq 23$.
Extending Severi's result to genera $g\geq 11$ requires subtle
ideas and the use of powerful modern techniques, even though the idea
of the proof is simple enough.  Sernesi \cite{Se1} was
the first to go past the classical analysis of Severi by proving
that $\cM_{12}$ is unirational. A few years later,
M. C. Chang and Z. Ran proved that $\cM_{11}$ and $\cM_{13}$ are
also unirational (cf. \cite{CR1}). In the process, they gave another
proof for Sernesi's theorem for $\cM_{12}$. The case $g=14$ remained
open for a long time, until Verra, using liaison techniques as well as Mukai's work on models of canonical curves of genus at most $9$, proved that $\cM_{14}$ is
unirational.  Verra's approach gives a much simpler proof of the unirationality of $\cM_g$ in the cases $g=11, 12, 13$ as well. We shall explain his main ideas following \cite{Ve}.

Chang and Ran showed that $\kappa(\mm_g)=-\infty$ for $g=15, 16$, cf. \cite{CR2}, \cite{CR3}.
This was recently improved by Bruno and Verra \cite{BV} who proved that $\cM_{15}$ is rationally connected. Precisely,  they proved that a general curve $[C]\in \cM_{15}$ embedded via a linear series $C\stackrel{|L|}\longrightarrow \PP^6$, where $L\in W^6_{19}(C)$, lies on a smooth complete intersection surface $S\subset \PP^6$ of type $(2, 2, 2, 2)$, in such a way that $\mbox{dim } |\OO_S(C)|=1$. This last statement follows via a standard exact sequence argument because such a surface $S$ is canonical.

Turning to genus $16$, it is proved in \cite{CR3} that $K_{\mm_{16}}$ is not a pseudo-effective class. It follows from \cite{BDPP}, that this actually implies that $\mm_{16}$ is uniruled.\footnote{More generally, it follows that $\mm_g$ is uniruled whenever one can show that $K_{\mm_g}$ is not a pseudo-effective class. I am grateful to J. McKernan for pointing this out to me.} The question whether
$\mm_{15}$ or $\mm_{16}$ are actually unirational remains open and seems difficult. Note that the above mentioned argument from \cite{BV} actually implies that through a general point of $\mm_{15}$ there passes a rational surface.
\begin{question}
What is the Kodaira dimension of $\mm_g$ for $17\leq g\leq 21$?
\end{question}
A partial result for $\mm_{23}$ was obtained in \cite{F1} where the inequality $\kappa(\mm_{23})\geq 2$ is proved.
Section 7 of this paper is devoted to the proof of the following result:
\begin{theorem}
The moduli space $\mm_{22}$ is of general type.
\end{theorem}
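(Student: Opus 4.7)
The strategy is to realise the canonical class
$$K_{\mm_{22}} = 13\lambda - 2\delta_0 - 3\delta_1 - \sum_{i=2}^{11} 2\delta_i$$
as a non-negative rational combination of $\lambda$, an effective divisor class $[\overline{\cZ}]$, and the boundary classes $\delta_i$. Since $\lambda$ is big on $\mm_g$, this would force $K_{\mm_{22}}$ to be big and hence $\mm_{22}$ to be of general type. A routine linear-algebra argument reduces everything to producing a single effective divisor $\overline{\cZ}$ with class $a\lambda - b_0\delta_0 - \sum_{i \geq 1} b_i \delta_i$ satisfying the slope inequality $a/b_0 < 13/2$, together with mild coefficient bounds $b_i/b_0 \geq 2/13$ and $b_1/b_0 \geq 3/13$. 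For $g = 22$ the Brill--Noether divisors have slope $6 + 12/(g+1) = 150/23 > 13/2$ and hence do not suffice; instead, following \cite{F3}, I would construct a Koszul (syzygy) divisor.

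The divisor $\overline{\cZ}$ is cut out by an unexpected syzygy condition imposed on a Brill--Noether linear series of the expected dimension. For $g = 22$ one naturally starts from pairs $(C, L)$ with $L$ in a Brill--Noether locus of $C$ having $\rho = 0$, for instance $L \in W^1_{12}(C)$ or its Serre dual in $W^{10}_{30}(C)$; the corresponding Hurwitz-type stack is a generically finite cover of $\cM_{22}$. Over this stack one builds two vector bundles whose fibres over $(C, L)$ are the source and target of the Koszul differential computing $K_{p, q}(C, L)$, and tunes $(p, q)$ so that the two bundles have the same rank. The degeneracy locus of the resulting morphism is a virtual divisor whose pushforward to $\cM_{22}$, closed up inside $\mm_{22}$, yields $\overline{\cZ}$. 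Its class is then extracted by expressing the Chern classes of the two Koszul bundles via tautological classes on the universal Picard scheme and evaluating against standard test curves sweeping out the interior and each boundary divisor.

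There are two main obstacles. First, one must show that $\overline{\cZ}$ is a genuine divisor rather than all of $\mm_{22}$, which requires exhibiting at least one smooth genus-$22$ curve on which the chosen Koszul condition fails. This is where Voisin's proof \cite{V2} of the generic Green conjecture provides the essential input: by specialising to a curve whose chosen linear series is related, via Lazarsfeld-type duality between $L$ and $K_C \otimes L^{-1}$, to a generic canonical embedding, one transfers the required Koszul vanishing from \cite{V2} to an open subset of $\cM_{22}$. Second, and more serious, is the slope computation itself: the coefficients $(a, b_0, b_i)$ must be computed explicitly and the inequality $a/b_0 < 13/2$ verified. Because $g = 22$ sits uncomfortably close to the Harris--Mumford threshold and the numerical slack above $13/2$ is thin, the Eagon--Northcott style pushforward on the Hurwitz stack must be carried out sharply. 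In my view this class computation is the true heart of the proof.
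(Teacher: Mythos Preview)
Your overall strategy is correct, and it is the one the paper follows: exhibit an effective divisor of slope strictly below $13/2$ and then invoke the bigness of $\lambda$. The gap is in the specific Koszul divisor you propose. You suggest working over the $\rho=0$ stack with $L\in W^1_{12}(C)$ (equivalently $K_C\otimes L^{\vee}\in W^{10}_{30}(C)$). The paper computes exactly this class in Section~6.1: the resulting divisor $\overline{\cZ}_{22,2}$ has slope
\[
s(\overline{\cZ}_{22,2})=\frac{1665}{256}=6.5032\ldots\;>\;\frac{13}{2},
\]
so the $\rho=0$ syzygy divisor \emph{fails} for $g=22$, by a hair. This is precisely why $g=22$ is delicate and was not covered by \cite{F2}, \cite{F3}.

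What the paper actually does (Section~7) is move to a Brill--Noether locus with $\rho=1$: it takes $L\in W^6_{25}(C)$, so that $\sigma:\mathfrak{G}^6_{25}\to\cM_{22}$ is a curve fibration rather than a finite cover, and imposes the condition that $\mathrm{Sym}^2 H^0(C,L)\to H^0(C,L^{\otimes 2})$ fail to be injective. The degeneracy locus now has expected codimension $2$ in $\mathfrak{G}^6_{25}$, and its pushforward $\overline{\mathfrak{D}}_{22}$ is the divisor one needs. The class is computed via test curves $C^0,C^1$ and Porteous/Harris--Tu formulas on $C\times W^2_{17}(C)$, yielding slope $17121/2636=6.49506\ldots<13/2$. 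The higher $b_j$ are then handled by \cite{FP} Corollary~1.2. Note also that the transversality input here is not Voisin's theorem but a Maximal Rank statement for all $L\in W^6_{25}(C)$ on a general curve, deferred to \cite{F4}; the irreducibility trick you allude to for the $\rho=0$ Hurwitz stack does not apply when $\rho=1$.
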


Similar questions about the birational type of other moduli spaces have been studied. Logan \cite{Log} has proved that for all $4\leq g\leq 22$ there exists an explicitly known integer $f(g)$ such that $\mm_{g, n}$ is of general type for $n\geq f(g)$. The bounds on the function $f(g)$ have been significantly improved in \cite{F3}. The moduli space $\cA_g$ of
principally polarized abelian varieties of dimension $g$ is known to be of general type for $g\geq 7$ due to results
of Freitag \cite{Fr}, Mumford \cite{M5} and Tai \cite{T} (For a comprehensive recent review of developments on the global geometry of $\cA_g$, see \cite{Gru}). Freitag was the first to go beyond the classical picture and show that for $g\geq 17, \ g\equiv 1 \mbox{ mod }8$, the space $\cA_g$ rather than one of its covers corresponding to "moduli with level structure"
is of general type. Freitag's work seems to have been essential in making Mumford realize that Severi's Conjecture
might be fundamentally false, see the discussion in \cite{HM} pg. 24. We mention that using e.g. the moduli space of Prym varieties, one can show that $\cA_g$ is known to be unirational for $g\leq 5$, cf. \cite{Don}, \cite{Ve2}. The remaining question is certainly difficult and probably requires new ideas:
\begin{question}
What is the Kodaira dimension of $\cA_6$?
\end{question}

Tai also discovered an important criterion (now called the \emph{Reid-Shepherd-Barron- Tai} criterion) for canonical forms on the smooth locus of spaces with finite quotient singularities to extend to any resolution of singularities. He then showed that $\cA_g$ satisfies the Reid-Shepherd-Barron-Tai criterion. A similar analysis of singularities (which is needed whenever one shows that a coarse moduli space is of general type), in the case of
$\mm_g$, has been carried out in \cite{HM} Theorem 1.

A very interesting moduli space (also in light of Section 6 of this paper and the connection with the Slope Conjecture), is the moduli space $\mathcal{F}_g$ of polarized $K3$ surfaces $[S, h]$ of degree $h^2=2g-2$. On $\mathcal{F}_g$ one considers the $\PP^g$-bundle
$$\mathcal{P}_g:=\{\bigl([S, h], C\bigr): [S, h]\in \mathcal{F}_g, C\in |h|\}$$ together with the projections $p_1:\mathcal{P}_g\rightarrow \mathcal{F}_g$ and $p_2:\mathcal{P}_g --> \cM_g$. The image $\mathcal{K}_g:=p_2(\mathcal{P}_g)$ is the locus of curves that can be abstractly embedded in a $K3$ surface. For $g\geq 13$ the map $p_2$ is generically finite (in fact, generically injective cf. \cite{CLM}), hence $\mbox{dim}(\mathcal{K}_g)=19+g$. This locus appears as an obstruction for an effective divisor on $\mm_g$ to have small slope, cf. Proposition \ref{nef}.
The geometry of $\mathcal{F}_g$ has been studied
in low genus by Mukai and in general, using automorphic form techniques, initially by Kondo \cite{K} and more recently, to great effect, by Gritsenko, Hulek and Sankaran \cite{GHS}. Using Borcherds's construction of automorphic forms on locally symmetric domains of type IV, they proved that (any suitable compactification of) $\mathcal{F}_g$ is of general type for $g>62$ as well as for $g=47, 51, 55, 58, 59, 61$. The largest $g$ for which $\mathcal{F}_g$ is known to be unirational is equal to $20$, cf. \cite{M4}.

\begin{problem}
 Prove  purely algebro-geometrically that $\mathcal{F}_g$ is of general type for $g$ sufficiently large. Achieve this by computing the class of a geometric (Noether-Lefschetz, Koszul) divisor on $\F_g$ and comparing this calculation against the canonical class.
\end{problem}

More generally, it is natural to ask whether the time is ripe for a systematic study of the birational invariants of the Alexeev-Koll\'ar-Shepherd-Barron moduli spaces of higher dimensional  varieties (see \cite{AP}, \cite{H} for a few beautiful, yet isolated examples when the geometry of such spaces has been completely worked out).

We end this discussion by describing the birational geometry of the moduli space $\cR_g$ classifying pairs $[C, \eta]$ where $[C]\in \cM_g$ and $\eta\in \mbox{Pic}^0(C)[2]$ is a point of order $2$ in its Jacobian. This moduli space provides an interesting
correspondence between $\cM_g$ and $\cA_{g-1}$ via the natural projection $\pi:\cR_g\rightarrow \cM_g$ and the \emph{Prym map} $$\mathfrak{Pr}_g: \cR_g\rightarrow \cA_{g-1}.$$ For $g\leq 6$ the Prym map is dominant, thus a study of the birational invariants of $\cR_g$ gives detailed information about $\cA_{g-1}$ as well. For $g\geq 7$ the Prym map $\mathfrak{Pr}_g$ is generically injective (though never injective) and
we view $\cR_g$ as a desingularization of the moduli space of Prym varieties $\mathfrak{Pr}_g(\cR_g)\subset \cA_{g-1}$. There is a good compactification $\rr_g$ of $\cR_g$, by taking $\rr_g$ to be the coarse moduli space associated to the moduli stack of stable
maps $\rem_g(\mathcal{B}\mathbb Z_2)$. Note that the Galois covering $\pi$ extends to a finite ramified covering $\pi:\rr_g\rightarrow \mm_g$. We have the following result \cite{FL}:
\begin{theorem}\label{prijm}
The compact moduli space of Pryms $\rr_g$ is of general type for $g>13$ and $g\neq 15$. The Kodaira dimension of $\rr_{15}$ is at least $1$.
\end{theorem}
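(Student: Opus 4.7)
The plan is to follow the Harris-Mumford-Eisenbud paradigm, lifted from $\mm_g$ to the finite cover $\rr_g$: compute $K_{\rr_g}$ in a suitable basis of Picard classes and then exhibit an effective divisor $\cD$ such that $K_{\rr_g}$ is a positive rational combination of $[\cD]$, $\lambda$, and boundary classes.

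First I would write down $K_{\rr_g}$ via Riemann-Hurwitz applied to $\pi:\rr_g\rightarrow\mm_g$, which is finite of degree $2^{2g}-1$ and \'etale over $\cM_g$. For this I need a clean enumeration of the boundary divisors of $\rr_g$ and the ramification behaviour of $\pi$ along each of them. Above $\delta_0$ there are two \'etale components (corresponding to $\eta$ restricting trivially to one or the other branch at the node) plus a ``Wirtinger'' component ramified over the node; above each $\delta_i$ with $i\geq 1$ there are three components according to whether $\eta$ restricts non-trivially to the genus $i$ piece, the genus $g-i$ piece, or to both. Tracking multiplicities yields $K_{\rr_g}$ as an explicit combination of $\lambda$ and these boundary divisors.

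The second and central step is to produce the effective divisor $\cD$. In the spirit of the Koszul-divisor proof of Theorem \ref{hme} advertised in this survey, the right candidate is a syzygy locus attached to the Prym-canonical embedding $C\hookrightarrow \PP^{g-2}$ determined by $K_C\otimes \eta$: one singles out a Green-type Koszul condition that generically holds on a Prym-canonical curve and whose failure locus is divisorial on $\rr_g$. The class of this virtual degeneracy locus is computed by extending the Koszul construction to a partial compactification, restricting to test curves (pencils through a node, pencils on $K3$ surfaces, and curves inside the boundary components listed above), and applying Porteous' formula. Comparing slopes against $K_{\rr_g}$, the resulting inequality is strict when $g\geq 14$, $g\neq 15$, and reduces to an equality for $g=15$, yielding bigness in the first range and $\kappa(\rr_{15})\geq 1$ in the second.

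Lastly, one needs the Reid-Shepherd-Barron-Tai criterion for $\rr_g$, so that pluricanonical forms on the smooth locus extend to a desingularization. Away from the $\pi$-ramification this reduces to the criterion verified for $\mm_g$ in \cite{HM}, and at the new boundary loci the local quotient structure is explicit enough to be checked directly. The main obstacle is the second step: one must pin down the correct Koszul condition on Prym-canonical curves, extend it across the Wirtinger and \'etale boundary divisors (where the $2$-torsion element of the smoothing degenerates to a line bundle on the normalization of a nodal curve), and carry out the test-curve enumeration with enough precision to separate $g=15$ from $g=14$ and $g\geq 16$. It is precisely that delicate slope comparison, rather than any abstract cone argument, that forces the statement in the form given.
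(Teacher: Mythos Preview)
Your overall strategy---compute $K_{\rr_g}$ from the ramification of $\pi:\rr_g\rightarrow\mm_g$, produce an effective divisor coming from syzygies of the Prym-canonical system $|K_C\otimes\eta|$, and compare slopes---is the paradigm actually used in \cite{FL}, to which the paper defers for the proof. The paper itself does not supply a detailed argument, but it does single out one essential ingredient, and this is precisely where your proposal has a genuine gap.

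You write that the extension of pluricanonical forms ``reduces to the criterion verified for $\mm_g$ in \cite{HM}, and at the new boundary loci the local quotient structure is explicit enough to be checked directly.'' The paper states explicitly that this is \emph{not} the case: the Reid--Shepherd-Barron--Tai criterion \emph{fails} on $\rr_g$. There is a codimension~$2$ locus of non-canonical singularities (arising from automorphisms of Prym curves), so the naive route you describe does not work. What \cite{FL} shows instead is that, despite this failure, for $g\geq 4$ every pluricanonical form on $\rr_{g,\mathrm{reg}}$ nonetheless extends to any resolution; this requires a delicate local analysis at the bad locus, not merely a verification of the RSBT inequality. Your plan as written would break down at exactly this step.

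A smaller point: your description of the boundary over $\delta_0$ confuses the components. The ``Wirtinger'' component $\delta_0''$ is \emph{unramified} for $\pi$; the ramified component $\delta_0^{\mathrm{ram}}$ is a third, distinct piece (where the Prym structure degenerates to a square root supported at the node). Getting these straight is necessary both for the Riemann--Hurwitz computation of $K_{\rr_g}$ and for the test-curve calculations you propose.
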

Thus there are genera (e.g. $g=14$) for which $\mm_g$ is unirational but $\rr_g$ is of general type. Note that $\rr_g$ is unirational
for $g\leq 7$ and it appears to be difficult to extend the range of $g$ for which $\rr_g$ is unirational much further.
An essential ingredient in the proof of Theorem \ref{prijm} is the analysis of the singularities of $\rr_g$. Kodaira-Spencer
theory shows that singularities of $\rr_g$ correspond to automorphisms of Prym curves. A delicate local
analysis shows that, even though the Reid-Shepherd-Barron-Tai criterion does not hold everywhere on $\rr_g$ (precisely, there is a codimension $2$ locus of non-canonical singularities),
for $g\geq 4$ every pluricanonical form defined on the smooth part of $\rr_g$  extends to any desingularization. Equivalently, for any resolution of singularities
$\epsilon: \widehat{R}_g\rightarrow \rr_g$ and $l\geq 0$, there is an isomorphism of groups
$$\epsilon^*: H^0(\rr_{g, reg}, K_{\rr_g}^{\otimes l})\stackrel{\cong}\longrightarrow H^0(\widehat{R}_g, K_{\widehat{R}_g}^{\otimes l}).$$

Since $\rr_g=\mm_g(\mathcal{B} \mathbb Z_2)$, it makes sense to raise the following more general question:
\begin{problem}
For a finite group $G$, study the birational invariants (Kodaira dimension and singularities, Picard groups, cones of ample and effective divisors) of the moduli spaces of twisted stable maps
$\mm_g(\mathcal{B} G)$.
\end{problem}

We close by outlining the structure of the paper. In Section 2 we describe various attempts to prove that $\cM_g$ is unirational, starting with Severi's classical proof when $g\leq 10$ and concluding with Verra's recent work on $\cM_g$ for $g\leq 14$. While our presentation follows  \cite{Ve}, several arguments have been streamlined, sometimes with the help of Macaulay 2.
In Section 3 we present the structure of the Picard group of $\cM_g$ while in Section 4 we recall Harris and Mumford's spectacular application of the Grothendieck-Riemann-Roch theorem \cite{HM} in order to compute the canonical class $K_{\mm_g}$ and then discuss Pandharipande's recent lower bound on the slope of $\mm_g$. In Section 5 we present a much shorter proof of the Harris-Mumford Theorem \ref{hme} using syzygies of canonical
curves. Relying somewhat on Mukai's earlier work, we highlight the importance of the locus $\K_g\subset \cM_g$ of curves lying on $K3$ surfaces in order to construct effective divisors on $\mm_g$ having small slope and produce a criterion which each divisor of small slope must satisfy (Section 6). We then explain how to construct and compute the class of certain effective divisors on $\mm_g$ defined in terms of Koszul cohomology of line bundles on curves (cf. \cite{F2}, \cite{F3}). In Section 7 we prove that
$\mm_{22}$ is of general type.

\section{How rational is $\cM_g$?}

As a matter of terminology, if $\textbf{M}$ is a Deligne-Mumford stack, we denote
by $\cM$ its coarse moduli space. This is contrary to the convention set in \cite{ACV}
but for moduli spaces of curves it makes sense from a traditionalist point of view. Throughout the paper we denote by $\rem_g:\mbox{Sch}\rightarrow \mbox{Sets}$ the contravariant functor (stack) of stable curves of genus $g$,  which associates to every
scheme $S$ the set $\rem_g(S)$ of isomorphism classes
of relative stable curves $f:X\rightarrow S$ of genus $g$.

The functor $\rem_g$ is not representable, for this would imply that each iso-trivial family of stable
 curves is actually trivial. This, of course, is not the case. To remedy this problem one looks for a compromise solution by retaining the requirement that the
moduli space of curves be a scheme, but relaxing the condition that it represent $\rem_g$. The result is the
\emph{coarse moduli space of curves} $\mm_g$ which is an irreducible projective variety of dimension $3g-3$ with finite quotient singularities, cf. \cite{DM}, \cite{GIT}, \cite{M2}. For a family of stable curves $[f:X\rightarrow S]\in \rem_g(S)$
we shall denote by $m_f:S\rightarrow \mm_g$ the associated moduli map.

\subsection{Brill-Noether theory}\hfill

We recall a few basic facts from Brill-Noether theory, cf. \cite{ACGH}. For a smooth curve $C$ of genus
$g$ and for integers $d, r\geq 0$, one considers the cycle inside the Jacobian
$$W^r_d(C):=\{L\in \mbox{Pic}^d(C): h^0(C, L)\geq r+1\}.$$
The variety of linear series of type $\mathfrak g^r_d$ is defined as
$$G^r_d(C):=\{(L, V): L\in W^r_d(C), V\in \GG(r+1,
H^0(L))\}.$$ There is an obvious forgetful map $c:G^r_d(C)\rightarrow
W^r_d(C)$ given by $c(L, V):=L$.

We fix a point $l=(L, V)\in G^r_d(C)$, and describe the tangent
space $T_l(G^r_d(C))$. One has  the standard
identification $T_L(\mbox{Pic}^d(C))=H^1(C, \OO_C)=H^0(C, K_C)^{\vee}$ and
we denote by
$$\mu_0(L, V):V\otimes H^0(C, K_C\otimes L^{\vee})\rightarrow
H^0(C, K_C)$$ the \emph{Petri map} given by multiplication of sections.
The deformations of $[L]\in \mbox{Pic}^d(C)$ preserving the
space of sections $V$ correspond precisely to those elements $\phi \in H^0(C,
K_C)^{\vee}$ for which $\phi_{| \mathrm{Im }\ \mu_0(L, V)}=0$.
One obtains an exact sequence
$$0\longrightarrow \mbox{Hom}(V,
H^0(C, L)/V) \longrightarrow T_l(G^r_d(C))\longrightarrow \mbox{Ker }
\mu_0^{\vee}\longrightarrow 0.$$  It follows that $G^r_d(C)$ is smooth
and of dimension $$\rho(g, r, d):=g-(r+1)(g-d+r)$$ at the point $l$ if and only if
$\mu_0(L, V)$ is injective.

The Gieseker-Petri Theorem states that if $[C]\in \cM_g$ is general, then the
Petri map
$$\mu_0(L):H^0(C, L)\otimes H^0(C, K_C\otimes L^{\vee})\rightarrow
H^0(C, K_C)$$ is injective for \emph{every} $L\in \mathrm{Pic}^d(C)$. In particular
it implies that both $W^r_d(C)$ and $G^r_d(C)$ are irreducible
varieties of dimension $\rho(g, r, d)$. The variety $G^r_d(C)$ is smooth while  $\mbox{Sing } W^r_d(C)=W^{r+1}_d(C)$. Furthermore,
$W^r_d(C)=\emptyset$ if $\rho(g, r, d)<0$.

The first rigorous proof of Petri's theorem is due to Gieseker. The original proof has
been greatly simplified by Eisenbud and Harris, cf. \cite{EH4}, using degeneration to curves of compact type and the theory of limit
linear series. A very different proof, in which the degeneration argument is replaced by Hodge theory and the geometry of curves on $K3$ surfaces, has been found by Lazarsfeld \cite{La1}.

If $[C, p]\in \cM_{g, 1}$ and $l=(L, V)\in G^r_d(C)$, we define the \emph{vanishing sequence}
of $l$ at $p$
$$a^l(p):0\leq a_0^l(p)<\ldots <a_r^l(p)\leq d$$
by ordering the set $\{\mbox{ord}_p(\sigma)\}_{\sigma \in V}$. The \emph{ramification sequence}
of $l$ and $p$
$$\alpha^l(p): 0\leq \alpha_0^l(p)\leq \ldots  \leq \alpha_r(p)\leq d-r$$
is obtained from the vanishing sequence by setting $\alpha_i^l(p):=a_i^l(p)-i$ for $i=0\ldots r$.

The theory of degenerations of linear series (in the case of curves of compact type)
has been beautifully developed by Eisenbud and Harris \cite{EH1}. The major successes of the theory
include a simple proof of the Brill-Noether-Petri theorem  cf. \cite{EH4} and especially its essential use in the work
on the Kodaira dimension of $\mm_g$ cf. \cite{EH3}.
\begin{definition}
If $X$ is a stable
curve whose dual graph is a tree, a \emph{limit linear series} $\mathfrak
g^r_d$ on $X$, consists of a collection of linear series
$$l=\bigl\{l_Y=\bigl(L_Y, V_Y\subset H^0(L_Y)\bigr)\in G^r_d(Y): Y \mbox{ component of } X\bigr\}$$
satisfying the following compatibility condition: If $p\in Y\cap Z$ is a node
lying on two irreducible components $Y$ and $Z$ of $X$, then
$$a_i^{l_Y}(p)+a_{r-i}^{l_Z}(p)\geq d, \ \mbox{ for } i=0\ldots r.$$
\end{definition}
Limit linear series behave well in families: If $\textbf{M}_g^*\subset \rem_g$ denotes the open substack of
tree-like curves, then there exists an algebraic stack $\sigma:\widetilde{\mathfrak G}^r_d\rightarrow \textbf{M}_g^*$
classifying limit linear series. Each irreducible component of $\widetilde{\mathfrak G}^r_d$ has dimension at least
$3g-3+\rho(g, r, d)$. In particular if $l\in \overline{G}^r_d(C)=\sigma^{-1}(C)$ is a limit $\mathfrak g^r_d$ on a curve
$[C]\in \cM_g^*$ belonging to a component of $\overline{G}^r_d(C)$ of the expected dimension $\rho(g, r, d)$, then $l$ can be smoothed
to curves in an open set of $\cM_g$ (cf. \cite{EH1}).

\subsection{Severi's proof of the unirationality of $\cM_g$ when
$g\leq 10$}\hfill \vskip 2pt

We outline Severi's classical argument \cite{S} showing that $\cM_g$ is unirational for small genus (for a beautiful modern presentation see \cite{AC1}). The idea is very simple: One tries to represent the general curve $[C]\in \cM_g$ as a nodal plane curve $\Gamma \subset \PP^2$ of minimal degree $d$ such that $\rho(g, 2, d)\geq 0$ and then show that the nodes are in general position. Since the varieties of plane curves with \emph{fixed} nodes are linear spaces, hence rational varieties, this implies that
$\cM_g$ is unirational.

We fix  $d\geq (2g+8)/3$ and set $\delta:={d-1\choose 2}-g$. We consider the \emph{Severi variety}
$$U_{d, g}:=\{[\Gamma \hookrightarrow \PP^2]: \mathrm{deg}(\Gamma)=d,\ \Gamma \mbox{ is a nodal irreducible plane curve}, \ p_g(\Gamma)=g\}.$$ It is well-known that $U_{d, g}$ is an irreducible variety of dimension
$$\mathrm{dim }\ U_{d, g}=\mathrm{dim} \ \cM_g+ \rho(g, 2, d)+\mathrm{dim }\ PGL(3)=3d+g-1.$$
Furthermore, there exists a global desingularization map $\nu_{d, g}: U_{d, g}-->  \cM_g$ which associates
to each plane curve the class of its normalization. The Brill-Noether theorem guarantees that $\nu_{d, g}$ is surjective.
(Indeed, since $\rho(g, 2, d)\geq 0$ one has that $G^2_d(C)\neq \emptyset$ and it is straightforward to prove that a general
$\mathfrak g^2_d$ corresponds to a nodal model of a general curve $[C]\in \cM_g$, see for instance \cite{EH1}).

One defines the incidence correspondence between curves and their nodes
$$\Sigma:=\{\bigl([\Gamma \hookrightarrow \PP^2], p_1+\cdots +p_{\delta}\bigr)\in U_{d, g}\times \mathrm{Sym}^{\delta}(\PP^2): \{p_1, \ldots, p_{\delta}\}=\mathrm{Sing}(\Gamma)\},$$
together with the projection $\pi_2: \Sigma \rightarrow \mathrm{Sym}^{\delta}(\PP^2)$.
The fibres of $\pi_2$ being linear spaces, in order to conclude that $\Sigma$ is rational (and hence $\cM_g$  unirational), it suffices to
prove that $\pi_2$ is dominant.
A necessary condition for $\pi_2$ to be dominant is that
$$\mathrm{dim} \ \Sigma=3d+g-1\geq 2\delta.$$
This, together with the condition $\rho(g, 2, d)\geq 0$, implies that $g\leq 10$. We have the following result  \cite{S}, \cite{AC1} Theorem 3.2:
\begin{theorem}
We fix non-negative integers $g, d, \delta$ satisfying the inequalities
$$
\delta={d-1\choose 2}-g, \ \mbox{  }  \rho(g, 2, d)\geq 0 \mbox{ and  }  \ 3d+g-1\geq 2\delta.
$$
If $p_1, \ldots, p_{\delta}\in \PP^2$ are general points and $(n, \delta)\neq (6, 9)$, then there
exists an irreducible plane curve $\Gamma \subset \PP^2$ of degree $d$ having nodes at $p_1, \ldots, p_{\delta}$ and no other singularity.  It follows that $\cM_g$ is unirational for $g\leq 10$.
\end{theorem}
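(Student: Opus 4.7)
The plan is to establish that the projection $\pi_2: \Sigma \to \mathrm{Sym}^{\delta}(\PP^2)$ is dominant, for once this is known unirationality of $\cM_g$ is automatic: the fibre of $\pi_2$ over a general tuple $(p_1, \ldots, p_{\delta})$ is (an open subset of) the linear system of degree $d$ plane curves acquiring a double point at each $p_i$, hence a linear space. Combining the rationality of these fibres with the obvious rationality of $\mathrm{Sym}^{\delta}(\PP^2)$ makes $\Sigma$ unirational, and composing the forgetful map $\Sigma \to U_{d, g}$ with $\nu_{d, g}: U_{d, g} \dashrightarrow \cM_g$ transfers unirationality to $\cM_g$.

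To prove dominance of $\pi_2$ I would compute its differential. At a point $\bigl([\Gamma], p_1 + \cdots + p_{\delta}\bigr) \in \Sigma$ the tangent space to the fibre is naturally identified with $H^0\bigl(\PP^2, \I_Z(d)\bigr)$, where $Z = 2p_1 + \cdots + 2p_{\delta}$ is the scheme of double points supported at the nodes; a standard tangent space calculation then shows that surjectivity of $d\pi_2$ at such a point is equivalent to the vanishing $h^1\bigl(\PP^2, \I_Z(d)\bigr) = 0$, that is, to $Z$ imposing $3\delta$ independent conditions on $|dH|$. A direct manipulation using $\delta = \binom{d-1}{2} - g$ shows that the numerical hypothesis $3d + g - 1 \geq 2\delta$ is equivalent to $\binom{d+2}{2} - 1 \geq 3\delta$, so such a vanishing is at least numerically consistent.

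The heart of the argument, and the step I expect to be hardest, is the production of one nodal curve $\Gamma$ of degree $d$ whose $\delta$ nodes impose independent conditions on plane curves of degree $d$; once such a $\Gamma$ exists, semicontinuity forces the analogous vanishing for a general configuration $(p_1, \ldots, p_{\delta})$, and a Bertini argument in the resulting linear system $|\I_Z(d)|$ produces an irreducible member with nodes at precisely the prescribed points. I would construct such a $\Gamma$ by specialization and smoothing: start from a reducible curve, for instance a union of $d$ general lines with its $\binom{d}{2}$ nodes, and smooth away the excess singularities while keeping $\delta$ of them at prescribed general locations, using the deformation theory of nodal plane curves (each node can be independently smoothed or preserved) to verify that independence of the nodal conditions propagates through the smoothing. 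Alternatively one can induct on $d$ by splitting off a line and applying the inductive hypothesis to the residual degree $d-1$ curve.

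Finally, the exclusion $(d, \delta) \ne (6, 9)$ must be traced to the classical Cayley--Bacharach obstruction: nine general points of $\PP^2$ lie on a unique smooth cubic $E$, and the only plane sextic with double points at those nine points is $2E$, which is reducible; equivalently, $h^1(\I_Z(6)) \ne 0$ in that case and the general fibre of $\pi_2$ contains no irreducible curve. Outside this single exception the smoothing construction produces the required irreducible nodal $\Gamma$, dominance of $\pi_2$ follows, and $\cM_g$ is unirational for $g \leq 10$.
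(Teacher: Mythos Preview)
Your overall strategy coincides with the outline the paper gives immediately before the theorem: reduce unirationality of $\cM_g$ to dominance of $\pi_2:\Sigma\to\mathrm{Sym}^{\delta}(\PP^2)$, whose fibres are open subsets of linear systems. The paper does not actually prove the theorem but refers to \cite{S} and \cite{AC1} Theorem 3.2, so your proposal is already more detailed than what appears here. Your identification of surjectivity of $d\pi_2$ at a point $[\Gamma]\in U_{d,g}$ with the vanishing $h^1(\PP^2,\I_Z(d))=0$ for $Z=\sum 2p_i$ is correct, as is the numerical check $3d+g-1\geq 2\delta\Leftrightarrow \binom{d+2}{2}-1\geq 3\delta$, and the degeneration/induction approach you sketch is indeed the method of \cite{AC1}.

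Your explanation of the exception $(d,\delta)=(6,9)$ contains a factual error. For nine \emph{general} points the scheme $Z$ of double points \emph{does} impose independent conditions on sextics: $h^0(\OO_{\PP^2}(6))=28$ and $h^0(\I_Z(6))=1$, the unique member being the non-reduced curve $2E$, so $h^1(\I_Z(6))=0$. The obstruction is not a failure of independence but that the only curve in $|\I_Z(6)|$ lies outside $U_{6,1}$. Conversely, at any actual point $[\Gamma]\in U_{6,1}$ the nine nodes of $\Gamma$ admit a cubic $E'$ through them, so both $\Gamma$ and $2E'$ lie in $|\I_{Z_\Gamma}(6)|$, giving $h^0(\I_{Z_\Gamma}(6))\geq 2$ and hence $h^1(\I_{Z_\Gamma}(6))\neq 0$; this is why your submersivity criterion cannot be verified anywhere on $U_{6,1}$ and $\pi_2$ fails to dominate.

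One smaller remark: once you exhibit a single $[\Gamma]\in U_{d,g}$ with $h^1(\I_{Z_\Gamma}(d))=0$, submersivity of $\pi_2$ at that point already yields dominance from the component of $\Sigma$ containing $[\Gamma]$. The detour through semicontinuity for general $Z$ followed by a Bertini argument in $|\I_Z(d)|$ is unnecessary, and the Bertini step would in any case require extra work to guarantee that the general member is irreducible with nodes (rather than worse singularities) exactly at the $p_i$.
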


\begin{remark} As explained Severi's argument cannot be extended to any $\cM_g$ for $g\geq 11$. In a similar direction, a classical result of B. Segre \cite{Seg} shows that if $S$ is any algebraic surface and $\Xi\subset S\times V$ is any algebraic system of \emph{smooth} genus $g$ curves contained in $S$, then whenever $g>6$, the moduli map $m_{\Xi}: V-->\cM_g$ cannot be dominant, that is, no algebraic system of smooth curves of genus $g>6$ with general moduli can lie on any given surface.
\end{remark}

\subsection{Verra's proof of the unirationality of $\cM_g$ for
$11\leq g\leq 14$} $\hfill$

We fix an integer $g\geq 11$ and aim to prove the unirationality of
$\cM_g$ by showing that a suitable component of a Hilbert scheme
of curves
$$\mbox{Hilb}_C:=\{C\subset \PP^r: p_a(C)=g, \mbox{deg}(C)=d\},$$
where $\rho(g, r, d)\geq 0$ is unirational. The component
$\mbox{Hilb}_C$ must have the property that the forgetful rational
map $$m_C: \mbox{Hilb}_C-->\cM_g$$ is dominant (in particular, the general point of $\mbox{Hilb}_C$  corresponds
to a smooth curve $C\subset \PP^r$). To prove that
$\mbox{Hilb}_C$ is unirational we shall use an incidence
correspondence which relates $\mbox{Hilb}_C$ to another Hilbert
scheme of curves $\mbox{Hilb}_D$ parameterizing  curves $D\subset
\PP^r$ such that $$\mbox{deg}(D)=d', \  g(D)=g'\ \mbox{ and  } H^1(D,
\OO_D(1))=0$$ (thus $r=d'-g'$). If $[D\hookrightarrow \PP^r]\in \mbox{Hilb}_D$ is a smooth
curve with $H^1(D, \OO_D(1))=0$, then trivially
$H^1(D, N_{D/\PP^r})=0$, which implies that $\mbox{Hilb}_D$ is smooth at the point $[D\hookrightarrow \PP^r]$ and of
dimension $$h^0(D, N_{D/\PP^r})=\chi(D, N_{D/\PP^r})=(r+1)d'-(r-3)(g'-1)$$
(see e.g. \cite{Se2}).
Moreover, there exists an open subvariety  $\cU_D\subset \mbox{Hilb}_D$ parameterizing smooth non-special curves $D\subset
\PP^r$  such that (i) the moduli map $m_D:\cU_D\rightarrow \cM_{g'}$
is dominant, and (ii) the restriction maps $\mu_f:\mbox{Sym}^f H^0(D,
\OO_D(1))\rightarrow H^0(D, \OO_D(f))$ are of maximal rank for all
integers $f$.

The correspondence between $\mbox{Hilb}_C$ and $\cU_D$ is
given by liaison with respect to hypersurfaces of a fixed degree $f$,
that is, via the variety
$$\Sigma:=\{(D, V): [D\hookrightarrow \PP^r]\in \cU_D, \ \mbox{ } V \in \GG \bigl(r-1, H^0(\PP^r,
\mathcal{I}_{D/\PP^r}(f))\bigr)\}.$$ One has a projection map
$u:\Sigma\rightarrow \cU_D$ given by $u(D, V):=[D]$, and a residuation
map  $$\mathrm{res}:\Sigma\rightarrow \mbox{Hilb}_C, \ \mbox{
} \mathrm{ res}(D, V):=[C\hookrightarrow \PP^r],$$ where $C\subset \PP^r$ is the scheme residual
to $D$ in the base locus of the linear system $|V|$. The morphism
$u:\Sigma\rightarrow \cU_D$ has the structure of a Grassmann bundle
corresponding to the vector bundle $\F$ over $\cU_D$ with fibres
$\F(D)=H^0(\PP^r, \I_{D/\PP^r}(f))$, thus clearly $\Sigma$ is unirational
provided that $\mbox{Hilb}_D$ is unirational. Since $\cU_D$ parameterizes
non-special curves, proving its unirationality is equivalent to showing that
the universal Picard variety $\mathfrak{Pic}_{g'}^{d'}\rightarrow
\cM_{g'}$ is unirational.

In order to show that $m_C:\mbox{Hilb}_C-->\cM_g$ is dominant (and thus, that the general curve $[C]\in
 \cM_g$ is linked to a curve $[D\hookrightarrow \PP^r]\in \cU_D$), it suffices to
exhibit a single nodal complete intersection $$
C\cup D=X_1 \cap \ldots \cap X_{r-1}$$  with  $X_i\in |\OO_{\PP^r}(f)|$, such that both
$C$ and $D$ are smooth and the Petri map
$$\mu_0(C):
H^0(C, \OO_C(1))\otimes H^0(C, K_C\otimes \OO_C(-1))\rightarrow
H^0(C, K_C)$$ is injective. Indeed, it is well-known (see e.g. \cite{Se2}) that via Kodaira-Spencer
theory, the
differential $dm_{[C]}:T_{[C]}(\mbox{Hilb}_C)\rightarrow
T_{[C]}(\textbf{M}_g)$ is given by the coboundary map $H^0(C,
N_{C/\PP^r})\rightarrow H^1(C, T_C)$ obtained by taking cohomology
in
 the exact sequence which defines the normal bundle of $C$:
\begin{equation}\label{normal}
0\longrightarrow T_C\longrightarrow T_{\PP^r}\otimes \OO_C
\longrightarrow N_{C/\PP^r}\longrightarrow 0.
\end{equation}

On the other hand, one has the pull-back of the Euler sequence from
$\PP^r$
\begin{equation}\label{euler}
0\longrightarrow \OO_C\longrightarrow H^0(C, \OO_C(1))^{\vee}\otimes
\OO_C(1)\longrightarrow T_{\PP^r}\otimes \OO_C\longrightarrow 0,
\end{equation}
and after taking cohomology we identify $H^1(T_{\PP^r}\otimes
\OO_C)$  with the dual of the Petri map $\mu_0(C)$. Thus if
$\mu_0(C)$ is injective, then $m_C$ is a dominant map around $[C\hookrightarrow \PP^r]$.

The numerical invariants of $C$ and $D$ are related by well-known
formulas for linked subschemes of $\PP^r$, see \cite{Fu} Example 9.1.12: Suppose
$C\cup D=X_1\cap \ldots \cap X_{r-1}$ is a nodal complete intersection with $X_i\in
|\OO_{\PP^r}(f)|$ for $1\leq i\leq r-1$. Then one has that
\begin{equation}\label{link1}
\mbox{deg}(C)+\mbox{deg}(D)=f^{r-1},
\end{equation}
\begin{equation}\label{link2}
2(g(C)-g(D))=\bigl((r-1)f-r-1\bigr)(\mbox{deg}(C)-\mbox{deg}(D)),
\mbox{ and }
\end{equation}
\begin{equation}\label{link3}
\# (C\cap D)=\mbox{deg}(C)\cdot \bigl((r-1)f-r-1\bigr)+2-2g(C).
\end{equation}

We shall prove that if we choose
\begin{equation}\label{link4}
f=\frac{r+2}{r-2}\in \mathbb Z,
\end{equation}
the condition that $\OO_D(1)$ be non-special is equivalent to
 $h^0(\PP^r, \mathcal{I}_{C/\PP^r}(f))=r-1$. Furthermore, under the same assumption,
$\mu_0(C)$ is injective if and only if $\I_{D/\PP^r}(f)$ is
globally generated.

To summarize, we have reduced the problem of showing that $\cM_g$ is
unirational to showing (1) that the universal Picard variety
$\mathfrak{Pic}^{d'}_{g'}$ is unirational and (2) that one can find a non-special curve
$D\subset \PP^{d'-g'}$ whose ideal is cut out by hypersurfaces of degree $f$.
This program can be carried out provided one can solve the equations (\ref{link1}), (\ref{link2}),
(\ref{link3}) and (\ref{link4}) while keeping $\rho(g, r, d)\geq 0$.
To prove (1) Verra
relies on Mukai's work on the geometry of Fano $3$-folds and
on the existence of models of canonical curves of genus $g\leq 9$ as
linear sections of certain rational homogeneous varieties. We first
explain Mukai's work on existence of models of canonical curves of
genus $g\leq 9$. The standard references for this part are \cite{Mu1}, \cite{Mu2},
\cite{Mu3}:

\begin{theorem}\label{mukai}
We fix integers $g\leq 9, r\geq 3$ and $d\geq g+3$. Then the
universal Picard variety $\mathfrak Pic_g^d$ is unirational.
Moreover, if $\H_{d, g, r}$ denotes the unique component of the
Hilbert scheme of curves whose generic point corresponds to a smooth
curve $D\subset \PP^r$ with $\mathrm{deg}(D)=d$, \ $g(D)=d$ and
$H^1(D, \OO_D(1))=0$, then $\H_{d, g, r}$ is unirational as well.
\end{theorem}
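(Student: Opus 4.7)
The plan is to upgrade Mukai's realization of general canonical curves of genus $g\leq 9$ as linear sections of certain rational homogeneous varieties to a rational parametrization of the universal Picard variety, via a relative Abel--Jacobi construction, and then pass to the Hilbert scheme by a standard frame bundle argument. For each $3\leq g\leq 9$, Mukai exhibits a rational homogeneous variety $V_g\subset \PP^{N_g}$ whose general linear section by a suitable linear subspace $\Lambda$ is a canonical curve of genus $g$, and such that every general $[C]\in \cM_g$ arises this way.  The relevant $V_g$ are complete intersections for $g\leq 5$, the Grassmannian $\GG(2,5)$ (cut further by a quadric) for $g=6$, the spinor tenfold $\mathrm{OG}(5,10)$ for $g=7$, $\GG(2,6)$ for $g=8$, and the symplectic Grassmannian $\mathrm{LG}(3,6)$ for $g=9$.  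In each case the parameter space $\G$ of admissible data is itself a Grassmannian (possibly bundled over a linear system), hence rational, and the induced moduli map $\G\to \cM_g$ is dominant; this is already Mukai's unirationality statement for $\cM_g$ in the range $g\leq 9$.

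Next, the universal curve $\cC\to \G$ is rational: the map $(p,\Lambda)\mapsto p$ exhibits $\cC$ as a Grassmann bundle over the rational variety $V_g$, with fibre over $p\in V_g$ the locus of admissible $\Lambda$'s passing through $p$. Iterating this bundle structure, the $d$-fold fibre product $\cC^{\times_\G d}$ is rational, and hence its $\mathfrak{S}_d$-quotient $\mathrm{Sym}^d(\cC/\G)$ is unirational.  Since $d\geq g+3>g$, every line bundle $L\in \mathrm{Pic}^d(C)$ on a smooth curve of genus $g$ satisfies $h^0(L)\geq 1$ by Riemann--Roch and hence equals $\OO_C(D)$ for some effective divisor $D$ of degree $d$.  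Therefore the relative Abel--Jacobi map
\[
\mathrm{Sym}^d(\cC/\G)\longrightarrow \mathfrak{Pic}_g^d,\qquad (C,\,p_1+\cdots+p_d)\longmapsto \bigl(C,\,\OO_C(p_1+\cdots+p_d)\bigr),
\]
is dominant, which gives the unirationality of $\mathfrak{Pic}_g^d$.

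To pass from $\mathfrak{Pic}_g^d$ to $\H_{d,g,r}$, introduce the auxiliary space $\widetilde{\H}_{d,g,r}\to \mathfrak{Pic}_g^d$ parametrizing triples $(C,L,\{s_0,\ldots,s_r\})$ with $\{s_0,\ldots,s_r\}$ a basis of $H^0(C,L)$ (defined over the open locus where $L$ is very ample and $H^1(C,L)=0$).  This is a $GL(r+1)$-torsor, and since $GL(r+1)$ is a special group in the sense of Serre it is Zariski-locally trivial, so $\widetilde{\H}_{d,g,r}$ is unirational.  The induced map $\widetilde{\H}_{d,g,r}\to \H_{d,g,r}$, sending a basis to the associated projective embedding $C\hookrightarrow \PP^r$, is dominant with $\mathbb{G}_m$-fibres, which transfers unirationality to $\H_{d,g,r}$. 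The only genuinely hard input in the whole argument is Mukai's structure theorem, which is precisely what enforces the bound $g\leq 9$; all subsequent steps are formal consequences of Abel--Jacobi theory and the splitness of $GL$-torsors.
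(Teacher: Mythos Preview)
Your argument has a genuine gap at the step claiming that the $d$-fold fibre product $\cC^{\times_{\G} d}$ is rational by ``iterating the bundle structure''. The Grassmann bundle description $\cC^{\times_{\G} k}\to V_g^k$, with fibre over $(p_1,\dots,p_k)$ the Grassmannian of admissible $\Lambda$'s through all the $p_i$, only works while $k$ general points of $V_g$ still lie on some common linear section. In Mukai's construction for $g=7,8,9$ the section is cut by a $\PP^{g-1}$, so this holds precisely for $k\le g$; already for $k=g$ the fibre is a single point. Once $k>g$ the projection $\cC^{\times_{\G} k}\to V_g^k$ is no longer dominant, and the only other natural projection $\cC^{\times_{\G} k}\to \cC^{\times_{\G}(k-1)}$ has as fibre the curve $C_\Lambda$ itself, of genus $g$, so no rational tower is available. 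Since the hypothesis is $d\ge g+3>g$, this is exactly the range where your iteration fails, and the rationality (or even unirationality) of $\cC^{\times_{\G} d}$ is left unproved.

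The paper sidesteps this by using only $g$ points and replacing the naive Abel--Jacobi map by a weighted one. Fixing nonzero integers $n_1,\dots,n_g$ with $n_1+\cdots+n_g=d-(2g-2)$, one defines the rational map
\[
V_g^g\dashrightarrow \mathfrak{Pic}_g^d,\qquad (x_1,\dots,x_g)\longmapsto \bigl(C_x,\ K_{C_x}\otimes\OO_{C_x}(n_1x_1+\cdots+n_gx_g)\bigr),
\]
where $C_x:=V_g\cap\langle x_1,\dots,x_g\rangle$. The $g$ points simultaneously determine the curve (via their linear span) \emph{and}, since $\dim\mathrm{Pic}^d(C)=g$, suffice to sweep out all of $\mathrm{Pic}^d(C)$ under the weighted sum map $C^g\to\mathrm{Pic}^d(C)$. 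As $V_g^g$ is rational, this yields the unirationality of $\mathfrak{Pic}_g^d$ without ever needing more than $g$ points on the curve. Your frame-bundle argument for the passage to $\H_{d,g,r}$ is correct and amounts to the paper's remark that $\H_{d,g,r}\to\mathfrak{Pic}_g^d$ is dominant with rational fibres.
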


The theorem is easily proved for $g\leq 6$ because, in this case, the
general canonical curve of genus $g$ is a complete intersection in
some projective space: For instance,
the canonical model of the general curve $[C]\in \cM_4$ is a $(2, 3)$ complete intersection in $\PP^4$ while
the canonical model of the general curve $[C]\in \cM_5$ is a $(2, 2, 2)$ complete intersection in $\PP^4$.
In the cases $g=7, 8, 9$,  Mukai
has found a  rational homogeneous space $\GG_g\subset \PP^{\mathrm{dim}(\GG_g)+g-2}$
such that $K_{\GG_g}=\OO_{\GG_g}(-\mathrm{dim}(\GG_g)+2)$, with the property that the general canonical curve
of genus $g$ appears as a curve section of $\GG_g$.

For $g=8$, we consider a vector space $V\cong \mathbb C^6$ and we introduce the Grassmannian of lines
$\GG_8:=\GG(2, V)$ together with the Pl\"ucker embedding
$\GG(2, V) \hookrightarrow \PP(\wedge^2 V)$. Then $K_{\GG_8}=\OO_{\GG_8}(-6)$. If $H\in
\GG(8, \wedge^2 V)$ is a general $7$-dimensional projective subspace and $C_H:=\GG_8 \cap \PP(H)\hookrightarrow \PP(H)$,
then by adjunction $K_C=\OO_C(1)$. In other words, a transversal codimension $7$
linear section of $\GG_8$ is a canonical curve of genus 8. Mukai
showed that any curve
$[C]\in \cM_8$ such that $W^1_4(C)=\emptyset$, can be recovered in
this way (cf. \cite{Mu1}).

The case $g=9$ is described in \cite{Mu3}: One takes $\GG_9:=\textbf{SpG}(3, 6)\subset \PP^{13}$
to be the \emph{symplectic Grassmannian}, that is, the Grassmannian
of Lagrangian subspaces of a $6$-dimensional symplectic vector
space $V$. Then $\mbox{dim}(\GG_9)=6$ and $K_{\GG_9}=\OO_{\GG_9}(-4)$. Codimension $5$ linear sections $\GG_9\cap H_1\cap \ldots \cap H_5\subset \PP^8$ are canonical curves of
genus $9$. A genus $9$ curve is a transversal
section of $\GG_9$ if and only if $W^1_5(C)=\emptyset$. In particular a general $[C]\in \cM_9$ is obtained through
this construction. Finally, we mention Mukai's construction for $g=7$, cf. \cite{Mu2}: For a vector space $V\cong \mathbb C^{10}$, the subset
of the Grassmannian $\GG(5, V)$ consisting of totally isotropic quotient spaces has two connected components, one of which is the $10$-dimensional
spinor variety $\GG_7\subset \PP^{15}$.

\noindent \emph{Proof of Theorem \ref{mukai}.}
It is enough to deal with the cases $g=7, 8,  9$. For each
integer $d\geq g+3$, we fix non-zero integers $n_1, \ldots , n_g$
such that
$$2g-2+n_1+\cdots+n_g=d$$ and note that for every $[C]\in \cM_g$,
the map $C^g\rightarrow \mbox{Pic}^d(C)$ sending $$(x_1, \ldots,
x_g)\mapsto K_C\otimes \OO_C(n_1 x_1+\cdots +n_g\ x_g)$$ is
surjective.  Then the  rational map
$\phi:\GG_g^g-->\mathfrak{Pic}_g^d$  defined by
$$\phi(x_1, \ldots, x_g):=\bigr(C_x=\GG_g \cap \PP(\langle x_1, \ldots, x_g\rangle ),
\ K_{C_x}\otimes \OO_{C_x}(n_1\ x_1+\cdots+n_g\
x_g)\bigr),$$ is dominant. Thus $\mathfrak{Pic}_g^d$ is unirational.

To establish the unirationality of $\H_{d, g, r}$ when $3\leq r\leq
d-g$, we consider the dominant map $f:\H_{d, g, r}\rightarrow
\mathfrak{Pic}_{g}^d$ given by $f([C\subset \PP^r]):=[C, \OO_C(1)]$.
The fibres of $f$ are obviously rational varieties. It follows that
$\H_{d, g, r}$ is unirational too.
\hfill $\Box$

Now we explain Verra's work cf. \cite{Ve}, focusing on the cases
$g=11, 14$. Several (admittedly beautiful) arguments of classical geometric nature have been straightened
or replaced by Macaulay 2 calculations in the spirit of \cite{F2}, Theorems 2.7, 2.10 or \cite{ST}.

\begin{theorem}\label{verra}
The moduli space of curves $\mm_g$ is unirational for $11\leq
g\leq 14$.
\end{theorem}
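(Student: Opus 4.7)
The plan is to apply the liaison strategy just outlined to each $g \in \{11,12,13,14\}$ separately. For each $g$ I first pin down integers $(r,f,d,g',d')$ satisfying $f=(r+2)/(r-2)\in\ZZ$, the linkage equations (\ref{link1})--(\ref{link3}), the Brill-Noether inequality $\rho(g,r,d)\geq 0$, and the Mukai bound $g'\leq 9$. Natural choices are $(r,f,d,g',d')=(4,3,14,9,13)$ for $g=11$, $(3,5,13,9,12)$ for $g=12$, $(6,2,18,8,14)$ for $g=14$, and a parallel choice for $g=13$; in each case one verifies immediately that $d'-g'=r$ (so $\OO_D(1)$ is non-special and linearly normal), that $d+d'=f^{r-1}$, and that $\rho(g,r,d)\geq 0$.

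With such parameters fixed, the strategy has three formal steps that are essentially automatic from the earlier discussion. First, since $g'\leq 9$, Theorem \ref{mukai} gives the unirationality of $\mathfrak{Pic}^{d'}_{g'}$ and of the Hilbert component $\H_{d',g',r}$, hence of the open subvariety $\cU_D\subset \H_{d',g',r}$ cut out by the generic-Petri and maximal-rank open conditions. Second, the projection $u:\Sigma\to\cU_D$ is a Grassmann bundle (with fibres $\GG(r-1,H^0(\I_{D/\PP^r}(f)))$ over the $\cU_D$-point $[D]$), so $\Sigma$ is itself unirational. Third, the residuation map $\mathrm{res}:\Sigma\to\mathrm{Hilb}_C$ therefore exhibits $\mathrm{Hilb}_C$ as the image of a unirational variety, and composing with the moduli map $m_C:\mathrm{Hilb}_C\dashrightarrow\cM_g$ would then yield the unirationality of $\cM_g$, provided $m_C$ is dominant.

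The entire substance is thus concentrated in proving this last dominance. Via the normal-bundle sequence (\ref{normal}) combined with the pulled-back Euler sequence (\ref{euler}), the obstruction to $dm_C$ being surjective at a smooth point $[C\hookrightarrow\PP^r]$ is identified with the kernel of the Petri map $\mu_0(C):H^0(C,\OO_C(1))\otimes H^0(C,K_C\otimes \OO_C(-1))\to H^0(C,K_C)$, so it suffices to show injectivity of $\mu_0(C)$ at one point. Under the choice $f=(r+2)/(r-2)$, this injectivity is, as announced, equivalent to $\I_{D/\PP^r}(f)$ being globally generated. Consequently the whole theorem reduces to producing, in each of the four genera, one single smooth non-special $D\subset\PP^r$ of the prescribed degree and genus such that $\I_{D/\PP^r}(f)$ is globally generated and a generic choice of $V\in\GG(r-1,H^0(\I_{D/\PP^r}(f)))$ cuts out a nodal complete intersection $C\cup D$ whose residual component $C$ is smooth.

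The main obstacle is producing this explicit pair $(D,V)$. I would construct $D$ starting from a Mukai linear section: for $g=14$, take a transverse codimension-$5$ section of $\GG(2,6)\subset\PP^{14}$ (a canonical genus-$8$ curve) and re-embed it into $\PP^6$ by a generic non-canonical line bundle of degree $14$; for $g=11$, analogously start with a genus-$9$ linear section of the symplectic Grassmannian $\textbf{SpG}(3,6)\subset\PP^{13}$ and re-embed it into $\PP^4$ by a generic line bundle of degree $13$; the cases $g=12,13$ are handled by exactly the same procedure with the corresponding Mukai models. Global generation of $\I_D(f)$, transversality of the complete intersection $C\cup D=X_1\cap\cdots\cap X_{r-1}$, and smoothness of $C$ are each Zariski-open conditions on the parameter space, so it suffices to exhibit one instance where they hold simultaneously; in the spirit of \cite{F2}, Theorems 2.7 and 2.10, and \cite{ST}, I would carry out the verification in Macaulay 2 over a finite field on a random concrete $D$, and then appeal to semicontinuity to propagate the conclusion to an open subset of the universal parameter space. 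Dominance of $m_C$ follows, and the theorem is proved.
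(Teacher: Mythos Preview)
Your proposal follows the paper's (i.e.\ Verra's) liaison strategy faithfully, and your parameters for $g=11,12,14$ are correct and agree with the paper's. Two minor differences in execution: the paper reformulates injectivity of $\mu_0(C)$ as maximal rank of the multiplication map $\nu_D(f):H^0(\I_{D/\PP^r}(f))\otimes H^0(\OO_D(1))\to H^0(\I_{D/\PP^r}(f+1))$ rather than as global generation of $\I_{D/\PP^r}(f)$, and for the explicit verification it builds $D$ not by re-embedding a Mukai section but as a curve in a concrete linear system on $\mbox{Bl}_{11}(\PP^2)\hookrightarrow\PP^6$, then reads off the Betti table in Macaulay2. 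Your Mukai-section construction is valid in principle but harder to implement than the paper's rational-surface model.

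There is, however, a genuine gap at $g=13$. You assert that ``a parallel choice'' of $(r,f,d,g',d')$ exists satisfying $f=(r+2)/(r-2)\in\ZZ$, $d'-g'=r$, $d+d'=f^{r-1}$, and the linkage relation (\ref{link2}); but a direct check of the three admissible pairs $(r,f)\in\{(3,5),(4,3),(6,2)\}$ shows that no integer solution exists when $g=13$. For instance with $(r,f)=(4,3)$ the equations $d+d'=27$, $d'-g'=4$, $2(13-g')=4(d-d')$ force $3g'=25$; the cases $(3,5)$ and $(6,2)$ fail similarly. So $g=13$ does not fit the canonical-surface pattern you invoke, and in \cite{Ve} this case is handled with a separate argument. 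Your outline does not account for this, and ``a parallel choice'' is not available.
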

\noindent \emph{Proof for $g=11, 14$.}
We place ourselves in the situation when $f=(r+2)/(r-2)\in \mathbb Z$.
The relevance of this condition is that a surface complete intersection
of type $(f, \ldots, f)$ in $\PP^r$ is a canonical surface in $\PP^r$.
We consider a nodal complete intersection $$C\cup D=X_1\cap
\ldots \cap X_{r-1}$$ with $X_i\in |\OO_{\PP^r}(f)|$, with $C$ and $D$
being smooth curves and with $g(C)=g$. Assuming that $\I_{C\cup D/\PP^r}(f)$ is
globally generated (this will be the case in all the situations we
consider),  then $C\cup D$ lies on a smooth complete
intersection of $r-2$ hypersurfaces of degree $f$, say  $S:=X_1\cap
\ldots \cap X_{r-2}$. Thus $S$ is a surface with $K_S=\OO_S(1)$ and
moreover $h^0(\PP^r, \I_{S/\PP^r}(f))=r-2$ (use the Koszul resolution of
$\I_{S/\PP^r}$). From the exact sequence
$$0\longrightarrow \I_{C/\PP^r}\longrightarrow
\I_{S/\PP^r}\longrightarrow \OO_S(-C)\longrightarrow 0,$$ we find
that
$h^0(\PP^r, \I_{C/\PP^r}(f))=h^0(S, \OO_S(C))+h^0(\PP^r, \I_{S/\PP^r}(f))=h^0(\OO_S(D))+
r-2$ \  (Note that $C+D\in |\OO_S(f)|$). Moreover, from the exact
sequence
$$0\longrightarrow \OO_S(1)\otimes \OO_S(-D)\longrightarrow
\OO_S(1)\longrightarrow \OO_D(1)\longrightarrow 0,$$ using also Serre
duality, we obtain that
$$h^0(S, \OO_S(D))=h^2(S, \OO_S(H-D))-h^2(S, \OO_S(H))=1+h^1(D, \OO_D(1)).$$
Therefore $\OO_D(1)$ is non-special if and only if
\begin{equation}\label{nrquad}
h^0(\PP^r, \I_{C/\PP^r}(f))=r-1.
\end{equation}
Assume now that $r=d'-g'$ and that $g(D)=g'$, $\mbox{deg}(D)=d'$
 $g(C)=g$ and  $\mbox{deg}(C)=d$, where these invariants are related
by the formulas (\ref{link1})-(\ref{link3}). Using a simple argument
involving diagram chasing, we claim that the Petri map
$$\mu_0(C): H^0(C, \OO_C(1))\otimes H^0(C, K_C(-1))\rightarrow H^0(C, K_C)$$
is of maximal rank if and only if the multiplication map
\begin{equation}\label{formf}
\nu_D(f): H^0(\PP^r, \I_{D/\PP^r}(f))\otimes H^0(D, \OO_D(1))\rightarrow
H^0(\PP^r, \I_{D/\PP^r}(f+1))
\end{equation}
is of maximal rank (see  \cite{Ve}, Lemma 4.4). Indeed, since $K_C(-1)=\OO_C(C)$,
we find that
$$\mathrm{Ker}\bigl(\mu_0(C)\bigr)=\mathrm{Ker}\{\mu_S: H^0(S, \OO_S(C))\otimes H^0(S, \OO_S(C+D))\rightarrow H^0(S, \OO_S(D+2C))\}.$$

Next we note that $\I_{D/S}(f)=\OO_S(C)$ and then the claim follows by applying the Snake Lemma to the
diagram obtained by taking cohomology in the sequence
$$0\longrightarrow H^0(\OO_{\PP^r}(1))\otimes \I_{S/\PP^r}(f)\longrightarrow H^0(\OO_{\PP^r}(1))\otimes \I_{D/\PP^r}(f)\longrightarrow H^0(\OO_{\PP^r}(1))\otimes \OO_S(C)\longrightarrow 0.$$

Thus to prove that the moduli map $m_C:\mathrm{Hilb}_C-->\cM_g$ is dominant, it suffices to exhibit a smooth curve $[D]\in
\mbox{Hilb}_D$ such that the map
$\nu_D(f)$ is injective.

Having explained this general strategy, we start with the case
$g=14$ and suppose that $[C]\in \cM_{14}$ is a curve satisfying Petri's
theorem, hence $\mbox{dim } W^1_8(C)=\rho(14, 1, 8)=0$.  For each pencil $A\in W^1_8$ we have that $L:=K_C\otimes A^{\vee}\in
W_{18}^6(C)$ and when $[C]\in \cM_{14}$ is sufficiently general, each such linear series gives rise to
 an embedding
$C\stackrel{|L|}\hookrightarrow \PP^6$. By Riemann-Roch we obtain
that
$$\mbox{dim }\mbox{Ker}\{ \mbox{Sym}^2 H^0(C, L)\rightarrow H^0(C,
L^{\otimes 2})\}={8\choose 2}-\bigl(2\ \mbox{deg}(C)+1-g(C)\bigr)=5,$$ that is
$C$ lies on precisely  $5$ independent quadrics $Q_1, \ldots, Q_5\in |\OO_{\PP^6}(2)|$.
Writing
$$Q_1\cap \ldots \cap Q_5=C\cup D,
$$
we find that
$g(D)=8$ and $\mbox{deg}(D)=14$. In particular, we  also have
that $H^1(D, \OO_D(1))=0$. Thus we have reduced the problem of
showing that $\cM_{14}$ is unirational to two questions:

\noindent (1)  $\mathfrak{Pic}_8^{14}$ is unirational. This  has
already been achieved (cf. Theorem \ref{mukai}).

\noindent (2) If $D\subset \PP^6$ is a
general smooth curve with $\mbox{deg}(D)=14$ and $g(D)=8$, then the
map $$\nu_D(2): H^0(\PP^6, \I_{D/\PP^6}(2))\otimes H^0(D, \OO_D(1))\rightarrow
H^0(\PP^6, \I_{D/\PP^6}(3))$$ is an isomorphism. This is proved using
liaison and a few classical arguments (cf. \cite{Ve}, Propositions 5.5-5.16). We shall present a slightly more
direct proof using Macaulay2.

 When $g=11$, we choose $d=14$
and $r=4$, hence $f=3$. We find that if
$[C]\in \cM_{11}$ is general then $\mbox{dim } W^4_{14}(C)=\rho(11, 4, 14)=6$ and  $h^1(C, L)=1$ for every $L\in
W^4_{14}(C)$. Moreover, for a general linear series $L\in W^4_{11}(C)$,
$$\mbox{dim }\mbox{Ker}\{ \mbox{Sym}^3 H^0(C, L)\rightarrow H^0(C,
L^{\otimes 3})\}=3,$$
(in particular condition (\ref{nrquad}) is satisfied). Hence there are
hypersurfaces $X_1, X_2, X_3\in |\OO_{\PP^4}(3)|$ such that $X_1\cap
X_2 \cap X_3=C\cup D$. Moreover, $g(D)=9$ and $\mbox{deg}(D)=13$,
and the unirationality of $\mm_{11}$ has been reduced to showing that:

\noindent (1) $\mathfrak{Pic}_9^{13}$ is unirational. This again follows from
Theorem \ref{mukai}.

\noindent (2) If $D\subset \PP^4$ is a general smooth curve with $\mbox{deg}(D)=13$ and $g(D)=9$, then the map
$$\nu_D(3): H^0(\PP^4, \I_{D/\PP^4}(3))\otimes H^0(\OO_D(1))\rightarrow
H^0(\PP^4, \I_{D/\PP^4}(4))$$
is injective.
\hfill
$\Box$

We complete the proof of Theorem \ref{verra}, and we focus on the case $g=14$. A similar argument deals with the case $g=11$:

\begin{theorem} If $D\stackrel{|L|}\hookrightarrow \PP^6$ is the embedding corresponding to a
general curve $[D, L]\in \mathfrak{Pic}_{8}^{14}$, then the multiplication map
$$H^0(\PP^6, \I_{D/\PP^6}(2))\otimes H^0(\PP^6, \OO_{\PP^6}(1))\rightarrow H^0(\PP^6, \I_{D/\PP^6}(3))$$
is an isomorphism.
\end{theorem}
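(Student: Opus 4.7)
The strategy is to verify that both sides of $\nu_D(2)$ have the same dimension, reduce via semicontinuity to exhibiting a single smooth curve on which the map is an isomorphism, and produce such an example with an explicit Macaulay 2 computation using Mukai's model of canonical genus $8$ curves. For the dimension count: a generic $L \in \mathrm{Pic}^{14}(D)$ is non-special with $h^0(D,L) = 7$ (being distinct from $K_D$), and $\deg L^{\otimes f} \geq 28 > 2g-2$ for $f\geq 2$, so Riemann--Roch gives $h^0(D, L^{\otimes 2}) = 21$ and $h^0(D, L^{\otimes 3}) = 35$. Using condition (ii) of the definition of $\cU_D$, which asserts that the restriction maps $H^0(\PP^6, \OO(f)) \to H^0(D, L^{\otimes f})$ are of maximal rank, one obtains $h^0(\PP^6, \I_{D/\PP^6}(2)) = 28 - 21 = 7$ and $h^0(\PP^6, \I_{D/\PP^6}(3)) = 84 - 35 = 49$. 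Thus both the source $H^0(\I_{D/\PP^6}(2)) \otimes H^0(\OO(1))$ and the target $H^0(\I_{D/\PP^6}(3))$ have dimension $49$, and the map is an isomorphism if and only if it has maximal rank.

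The locus in $\mathfrak{Pic}_8^{14}$ where $\nu_D(2)$ fails to have maximal rank is closed, so by irreducibility of $\mathfrak{Pic}_8^{14}$ (Theorem \ref{mukai}) it suffices to exhibit one $[D, L]$ on which $\nu_D(2)$ is an isomorphism. To construct such a point, I use Mukai's realization of a general canonical curve of genus $8$ as a transversal codimension $7$ linear section $C = \GG(2, \mathbb C^6) \cap \PP(H) \subset \PP^7$, where $H \subset \wedge^2 \mathbb C^6$ is an $8$-dimensional subspace; a general such $C$ satisfies $W^1_4(C) = \emptyset$ since $\rho(8,1,4) < 0$. Pick two general points $p_1, p_2 \in C$ and set $L := K_C \otimes \OO_C(p_1 - p_2)$, which is non-special of degree $14$ and produces an embedding $C \stackrel{|L|}{\hookrightarrow} \PP^6$. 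Working over $\mathbb F_p$ for a small prime $p$ to keep the calculation feasible, randomize the inputs, verify smoothness of $C \subset \PP^7$ and of its image $D \subset \PP^6$, confirm the numerics $h^0(\I_{D/\PP^6}(2)) = 7$ and $h^0(\I_{D/\PP^6}(3)) = 49$, and test whether the $49 \times 49$ matrix of $\nu_D(2)$ has full rank. A single successful computation yields the theorem in characteristic $p$, and spreading the example out to a flat family over $\mathrm{Spec}\,\ZZ_{(p)}$ together with semicontinuity of rank lifts the conclusion to characteristic zero.

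The main obstacle is the computational verification itself. The random choice of $H$ may fail to cut $\GG(2, \mathbb C^6)$ in a smooth curve, the points $p_1, p_2$ may land on a special divisor, or the resulting matrix may be singular for sporadic reasons; the script must be automated, run with multiple randomizations, and its outputs checked against the expected cohomological dimensions before one trusts the full-rank verdict. Conceptually there is no serious obstruction — the argument closely follows the computational strategy of \cite{F2}, Theorems 2.7 and 2.10, and of \cite{ST}, which the paper explicitly invokes as precedent — but producing an example that is simultaneously smooth, non-degenerate, and of full rank is where all the actual work lies.
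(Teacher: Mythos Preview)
Your proposal is correct in its overall architecture---the dimension count, the reduction by semicontinuity and irreducibility to a single explicit example, and the computational verification---and this matches the paper's logic. Where you diverge is in the construction of the example curve and in what the computation actually checks.

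The paper does not use Mukai's Grassmannian model. Instead it builds the example on a rational surface: blow up $\PP^2$ at eleven suitably random points to obtain $S \stackrel{|H|}{\hookrightarrow} \PP^6$, and take $D$ in an explicit linear system on $S$. This buys two things your approach lacks. First, the randomization is done in $\PP^2$ via the Hilbert--Burch theorem (as in \cite{ST}), which is computationally more robust than randomizing an $8$-plane in $\PP(\wedge^2 \mathbb{C}^6)$ and checking transversality with $\GG(2,6)$. Second, and more interestingly, the surface gives a conceptual bypass of the $49 \times 49$ matrix: once Macaulay2 confirms that $S$ satisfies property $(N_1)$, the injectivity of $\nu_D(2)$ reduces to the injectivity of
\[
\nu_S: H^0(S, \OO_S(H)) \otimes H^0(S, \OO_S(2H-D)) \rightarrow H^0(S, \OO_S(3H-D)),
\]
and since $h^0(\OO_S(2H-D)) = 2$, the base-point-free pencil trick identifies $\mathrm{Ker}(\nu_S)$ with $H^0(\OO_S(D-H))$, which vanishes because $D-H$ is visibly non-effective. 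So the paper's route trades your direct full-rank check for a smaller Betti-table computation plus an elementary geometric argument; yours is more direct but leans harder on the machine.
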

\begin{proof}
We consider $11$ general points in $\PP^2$ denoted by $p_1, \ldots, p_5$ and $q_1,\ldots, q_6$ respectively, and define the
linear system
$$H\equiv 6h-2(E_{p_1}+\cdots + E_{p_5})-(E_{q_1}+\cdots + E_{q_6})$$
 on the blow-up $S=\mbox{Bl}_{11}(\PP^2)$. Here $h$ denotes the
pullback of the line class from $\PP^2$. Using the program
Macaulay2 it is easy to check that $S\stackrel{|H|}\hookrightarrow
\PP^6$ is an embedding and the graded Betti diagram of $S$ is the following:
\[
\begin{matrix}
 1 & -  & -  & - & -  \\
 - & 5 & - & - &- \\
 - & -  & 15  & 16& 15
\end{matrix}
\]
Thus $S$ satisfies property $(N_1)$.  To carry out this calculation we
chose the $11$ points in $\PP^2$ randomly using the Hilbert-Burch
theorem so that they satisfy the Minimal Resolution Conjecture
(see \cite{ST} for details on how to pick random points in
$\PP^2$ using Macaulay). Next we consider a curve $D\subset S$ in
the linear system
 \begin{equation} \label{c16}
D\equiv 10h-3(E_{p_1}+E_{p_2})-4\sum_{i=3}^5 E_{p_i}-E_{q_1}-E_{q_2}-
2\sum_{j=3}^6 E_{q_j}. \end{equation} By using Macaulay2, we
pick $D$ randomly in its linear system and then check that $D$ is
smooth, $g(D)=8$ and $\mbox{deg}(D)=14$. We can compute directly the Betti diagram
of $D$:
\[
\begin{matrix}
 1 & -  & -  & - & -& \\
 - & 7 & - & - &- &\\
 - & -  & 35  & 56& 35
\end{matrix}
\]
Hence $K_{1, 1}(D, \OO_D(1))=0$, which shows that $\nu_2(D)$ is an isomorphism. This last part
 also follows directly: Since $S$ is cut out by quadrics, to
conclude that $D$ is also cut out by quadrics, it suffices to show that the
map
$$\nu_S: H^0(S, \OO_S(H))\otimes H^0(S, \OO_S(2H-D))\rightarrow H^0(S, \OO_S(3H-D))$$ is
surjective (or equivalently injective). Since $h^0(S,\OO_S(2H-D))=2$,
from the base point free pencil trick we get that
$\mbox{Ker}(\nu(S))=H^0(S, \OO_S(D-H))=0$, because $D-H$ is clearly not effective for a general choice of the $11$
points in $\PP^2$.
\end{proof}

We end this section, by pointing out that already existing results in \cite{CR3}, coupled with recent advances in higher dimensional birational geometry, imply the following:
\begin{theorem}\label{16}(Chang-Ran)
The moduli space $\mm_{16}$ is a uniruled variety.
\end{theorem}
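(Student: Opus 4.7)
The plan is to combine an intersection-theoretic input from Chang-Ran \cite{CR3} with two formal principles: the \cite{BDPP} characterization of uniruledness, and the Harris-Mumford extension theorem for pluricanonical forms on $\mm_g$. First I would invoke what is actually proved in \cite{CR3}, namely the existence of a covering family of irreducible curves $\{B_t\}_{t\in T}$ sweeping out $\mm_{16}$ and satisfying $K_{\mm_{16}}\cdot B_t<0$, which is equivalent to the assertion that $K_{\mm_{16}}$ is not a pseudo-effective class. The construction of this family is Brill-Noether theoretic: for a general $[C]\in\cM_{16}$ one exhibits a positive-dimensional family of projective models $C\hookrightarrow\PP^r$ of a suitable numerical type, and then degenerations within this family sweep out rational curves through $[C]$ in moduli whose intersection with $K_{\mm_{16}}$ can be bounded against the boundary divisors.

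Next I would invoke the theorem of Boucksom-Demailly-P\u{a}un-Peternell \cite{BDPP}: a smooth complex projective variety $X$ is uniruled if and only if $K_X$ is not pseudo-effective. Since $\mm_{16}$ is only a coarse moduli space with finite quotient singularities, BDPP has to be applied to a resolution $\pi:\widehat{M}_{16}\rightarrow\mm_{16}$. Here the Harris-Mumford singularity analysis \cite{HM} intervenes in its standard role: for $g\geq 4$ every pluricanonical form defined on the smooth locus of $\mm_g$ extends across the singularities, which is equivalent to the relation $K_{\widehat{M}_{16}}=\pi^*K_{\mm_{16}}+E$ with $E$ an effective $\pi$-exceptional $\QQ$-divisor. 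Consequently $K_{\widehat{M}_{16}}$ is pseudo-effective if and only if $K_{\mm_{16}}$ is, so the Chang-Ran result forces $K_{\widehat{M}_{16}}$ to fail to be pseudo-effective, BDPP renders $\widehat{M}_{16}$ uniruled, and this property descends to $\mm_{16}$ via the surjection $\pi$.

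The hard part is entirely the first step: the numerical calculation underlying \cite{CR3}. One must identify the right Brill-Noether locus on $\cM_{16}$ whose associated covering family has slope strictly smaller than the slope of $K_{\mm_{16}}$, which requires a delicate intersection-theoretic analysis against all the boundary components $\delta_0,\delta_1,\ldots,\delta_8$ of $\mm_{16}$, together with a verification that the resulting family actually covers a dense open subset of $\mm_{16}$ rather than being trapped in a proper subvariety. Granted this calculation, the passage through BDPP and the Harris-Mumford singularity theorem is purely formal and uses no feature specific to genus $16$; as the footnote to the introduction remarks, the very same argument delivers uniruledness of $\mm_g$ in every genus for which one can show that $K_{\mm_g}$ is not pseudo-effective.
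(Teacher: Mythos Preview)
Your overall architecture matches the paper's: cite \cite{CR3} to conclude that $K_{\mm_{16}}$ is not pseudo-effective, pass to a smooth model, and invoke \cite{BDPP}. Two points deserve correction, however.

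First, you mischaracterize what \cite{CR3} actually establishes. Chang and Ran do \emph{not} produce a covering family of curves sweeping out $\mm_{16}$. What they construct is a \emph{finite} collection of curves $F_1,\ldots,F_n\subset\mm_{16}$, each lying on a divisor $D_i$ on which $F_i$ is nef with respect to $\mathbb Q$-Cartier divisors, satisfying $F_i\cdot\sum_j D_j>0$ and $F_i\cdot K_{\mm_{16}}<0$ for every $i$. This combinatorial criterion forces $K_{\mm_{16}}$ out of the pseudo-effective cone, but it is a genuinely different mechanism from a moving family, and your speculative Brill-Noether description of the construction is not what happens. (Incidentally, asserting that the existence of a $K$-negative covering family is ``equivalent'' to non-pseudo-effectivity of $K$ already presupposes the BDPP theorem you later invoke.)

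Second, the detour through the Harris-Mumford extension theorem is unnecessary. The implication you need is that $K_{\mm_{16}}$ not pseudo-effective forces $K_{\widehat{M}_{16}}$ not pseudo-effective. This holds simply because $\pi_*K_{\widehat{M}_{16}}=K_{\mm_{16}}$ and pushforward preserves pseudo-effectivity; no control on the discrepancy divisor $E$ is needed. The effectiveness of $E$ furnished by \cite{HM} is relevant only for the \emph{reverse} implication, which plays no role here. The paper accordingly just notes that pseudo-effectiveness of the canonical class is a birational property and moves on.
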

\begin{proof} Chang and Ran proved in \cite{CR3} that $\kappa(\mm_{16})=-\infty$, by exhibiting an explicit collection of curves $\{F_i\}_{i=1}^n\subset \mm_{16}$,
with the property that each $F_i$ lies on a divisor $D_i\subset \mm_{16}$ such that $F_i$ is nef as a curve on $D_i$ with respect to $\mathbb Q$-Cartier divisors, and moreover $$F_i\cdot \sum_{j=1}^n D_j>0\ \mbox{ for }\ i=1, \ldots, n.$$
By explicit calculation they noted that $F_i\cdot K_{\mm_{16}}<0$ for  $i=1, \ldots, n$. This clearly implies that $K_{\mm_{16}}$ is not pseudo-effective. Since pseudo-effectiveness of the canonical bundle is a birational property, the canonical bundle of any smooth model of $\mm_{16}$ will lie outside the pseudo-effective cone as well. One can apply the the main result of \cite{BDPP} to conclude that $\mm_{16}$ is uniruled.
\end{proof}

\section{The Picard group of the moduli stack $\rem_g$}

For a stable curve $[C]\in \mm_g$ one can consider its \emph{dual
graph} with  vertices corresponding to the irreducible
components of $C$ and edges corresponding to nodes joining two
components. By specifying the dual graph, one obtains the topological stratification of $\mm_{g}$,
where the codimension $a$ strata correspond to the irreducible
components of the closure of the locus of curves $[C]$ having precisely $a$ nodes. The closure of the codimension $1$ strata
are precisely the boundary divisors of $\mm_{g}$: For $1\leq i\leq [g/2]$  we denote by $\Delta_{i}\subset
\mm_g$ the closure of the locus of stable curves $[C_1\cup C_2]$, where $C_1$ and $C_2$ are smooth curves of genera $i$ and $g-i$ respectively. Similarly, $\Delta_0\subset \mm_{g}$ is the closure of the locus of irreducible $1$-nodal stable curves. We have the decomposition
$$\mm_g=\cM_g\cup \Delta_0\cup \ldots \cup \Delta_{[g/2]}.$$

Next we describe the Picard group of the moduli stack
$\rem_g$. The difference between the Picard group of
the stack $\rem_g$ and that of the coarse moduli space $\mm_g$, while subtle, is not
tremendously important in describing the birational geometry of $\mm_g$. Remarkably, one can define $\mbox{Pic}(\rem_g)$
without knowing exactly what a stack itself is! This approach at least
respects the historical truth: In 1965
Mumford \cite{M1} introduced the notion of a sheaf on the functor (stack)
$\rem_g$. One had to wait until 1969 for the definition of a
Deligne-Mumford stack, cf. \cite{DM}.

\begin{definition}\label{picst}
A sheaf
$\mathcal{L}$ on the stack $\rem_g$ is an assignment of a sheaf $\mathcal{L}(f)$ on $S$ for every family $[f:X\rightarrow S] \in
 \rem_g(S)$, such that for any morphism of schemes
 $\phi:T\rightarrow S$, if $p_2:X_T:=X\times_S T\rightarrow T$ denotes
 the family obtained by pulling-back $f$, then there is an isomorphism of sheaves
 over $T$ denoted by
 $$\mathcal{L}(\phi, f): \mathcal{L}(p_2)\rightarrow
 \phi^*(\mathcal{L}(f)).$$ These isomorphisms should commute
 with composition of morphisms between the bases of the families. Precisely,
  if $\chi: W\rightarrow T$ is another morphism and
 $$\sigma_2: X_W:=X_T\times_T W\rightarrow W\in \rem_g(W),$$ then $\mathcal{L}(\phi \chi, f)=\chi^*\mathcal
{L}(\phi, f)\circ \mathcal{L}(\chi, p_2)$.
If $\L$ and $\E$ are sheaves on $\rem_g$, we define their tensor product by setting
$$(\L\otimes \E)(f):=\L(f)\otimes \E(f)$$ for each $[f:X\rightarrow S]\in \rem_g(S)$.

A sheaf $\L$ on $\rem_g$ is a \emph{line bundle} if $\L(f)\in \mbox{Pic}(S)$
 for every $[f:X\rightarrow S]\in \rem_g(S)$. We denote by $\mbox{Pic}(\rem_g)$ the group
 of isomorphism classes of line bundles on $\rem_g$. 
 
 Similarly, for $i\geq 0$, one defines a codimension $i$ \emph{cycle class}  $\gamma \in A^i(\rem_g)$, to be a collection of assignments $\gamma(f)\in A^i(S)$
 for all $[f:X\rightarrow S]\in \rem_g(S)$, satisfying an obvious compatibility condition like in Defintion \ref{picst}
\end{definition}
\begin{example} For each $n\geq 1$ the \emph{ Hodge classes} $\lambda_1^{(n)}\in \mbox{Pic}(\rem_g)$ are defined by taking $\lambda_1^{(n)}(f):=c_1(\mathbb E_n(f))$, where the assignment $$\mm_g(S)\ni [f:X\rightarrow S] \mapsto \mathbb E_n(f):= f_*(\omega_f^{\otimes n}),$$ gives rise to a vector bundle $\mathbb E_n$ on $\rem_g$ for each $n\geq 1$. Clearly $\mbox{rank}(\mathbb E_1)=g$ and $\mbox{rank}(\mathbb E_n)=(2n-1)(g-1)$ for $n\geq 2$. One usually writes $\mathbb E:=\mathbb E_1$.
Similarly, one can define the higher Hodge classes $\lambda_i^{(n)}\in A^i(\rem_g)$, by taking $$\lambda_i^{(n)}(f):=c_i(\mathbb E_n(f))\in A^i(S).$$
 It is customary to write that $\lambda_i:=\lambda_i^{(1)}$ and sometimes, $\lambda:=\lambda_1$.
\end{example}

There is an obvious group homomorphism $\rho:\mbox{Pic}(\mm_g)\rightarrow \mbox{Pic}(\rem_g)$
defined by $\rho(\mathcal{L})(f):=m_f^*(\mathcal{L})$ for every $\mathcal{L}\in \mbox{Pic}
(\mm_g)$ and $[f:X\rightarrow S]\in \rem_g(S)$.

To get to grips with the group $\mbox{Pic}(\rem_g)$ one can also use the GIT realization of the moduli space
and consider for each $\nu\geq 3$ the Hilbert scheme
$\bf{Hilb}_{g, \nu}$ of $\nu$-canonical stable embedded curves $C\subset \PP^{(2\nu-1)(g-1)-1}$. One has an isomorphism of varieties cf. \cite{GIT}, \cite{M2}
$$\mm_g\cong {\bf{Hilb}_{g, \nu}}//PGL\bigl((2\nu-1)(g-1)\bigr).$$
Using this  we can define an isomorphism of groups
$$\beta: \mbox{Pic}(\rem_g)\rightarrow
\mbox{Pic}({\bf{Hilb}_{g, \nu}})^{PGL\bigl((2\nu-1)(g-1)\bigr)}.$$
If $\sigma: \mathcal{C}_{g, \nu}\rightarrow {\bf{Hilb}_{g, \nu}}$ denotes the universal $\nu$-canonically embedded curve, where we have that
$\mathcal{C}_{g, \nu}\subset {\bf{Hilb}_{g, \nu}}\times \PP^{(2\nu-1)(g-1)-1}$,
we set $\beta(\mathcal{L}):=\mathcal{L}(\sigma)\in \mathrm{Pic}({\bf{Hilb}_{g, \nu}}).$

To define $\beta^{-1}$ we start with a line bundle $L\in \mathrm{Pic}({\bf{Hilb}}_{g, \nu})$ together with a
fixed lifting of the $PGL((2\nu-1)(g-1))$-action on ${\bf{Hilb}_{g, \nu}}$ to $L$. For a family of stable curves $f:X\rightarrow S$, we choose
a local trivialization of the projective bundle $\PP\bigl(f_*(\omega_f^{\otimes \nu})\bigr)$, that is, we fix isomorphisms over $S_{\alpha}$
$$\PP\Bigl((f_{\alpha})_*\bigl(\omega_{f_{\alpha}}^{\otimes \nu}\bigr)\Bigr)\cong \PP^{(2\nu-1)(g-1)-1}\times S_{\alpha},$$
where $\{S_{\alpha}\}_{\alpha}$ is a cover of $S$ and $f_{\alpha}=f_{| f^{-1}(S_{\alpha})}:X_{\alpha}\rightarrow S_{\alpha}$. Since the Hilbert
scheme is a fine moduli space, these trivializations induce morphisms $g_{\alpha}: S_{\alpha}\rightarrow
{\bf{Hilb}_{g, \nu}}$ such that on $S_{\alpha}\cap S_{\beta}$ the morphisms $g_{\alpha}$ and $g_{\beta}$
differ by an element from $PGL\bigl((2\nu-1)(g-1)\bigr)$. The choice of the $L$-linearization ensures that the sheaves $\{g_{\alpha}^*(L)\}_{\alpha}$ can be
glued to form a sheaf which we call $\beta^{-1}(L)(f)\in \mathrm{Pic}(S)$.

\begin{example} If $\OO_{{\bf{Hilb_{g, \nu}}}}(\delta)=\otimes_{i=0}^{[g/2]} \OO_{{\bf{Hilb_{g, \nu}}}}(\delta_i)$ is the divisor of all singular nodal curves on the universal curve
$\sigma:\cC_{g, \nu}\rightarrow {\bf{Hilb}_{g, \nu}}$, then
$$\rho([\Delta_0])=\beta^{-1}(\delta_0), \ \rho([\Delta_1])=2\beta^{-1}(\delta_1), \ \rho([\Delta_i])=\beta^{-1}(\delta_i)\ \mbox{ for }
2\leq i\leq [g/2].$$
 To put it briefly, we write that $\delta_i:=[\Delta_i]$ for $i\neq 1$ and $\delta_1:=\frac{1}{2}[\Delta_1]$
 in $\mathrm{Pic}(\rem_g)_{\mathbb Q}$.
 \end{example}

\begin{theorem}\label{pic}
1) The group homomorphism $\rho: \mathrm{Pic}(\mm_g)\rightarrow
\mathrm{Pic}(\rem_g)$ is injective with torsion cokernel. Thus
$$\rho_{\mathbb Q}:\mathrm{Pic}(\mm_g)_{\mathbb Q}\cong \mathrm{Pic}(\rem_g)_{\mathbb Q}.$$
2) For $g\geq 3$, the group $\mathrm{Pic}(\rem_g)$ is freely generated by the classes
$\lambda$, $\delta_0, \ldots,
\delta_{[g/2]}$.
\end{theorem}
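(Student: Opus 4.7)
The plan is to handle part (1) via the relation between line bundles on the Deligne-Mumford stack $\rem_g$ and those on its coarse moduli space $\mm_g$, and then derive part (2) by combining a generation statement (reducing to Harer's theorem on $\mathrm{Pic}(\textbf{M}_g)$) with a linear independence check via explicit test curves.

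For part (1), injectivity of $\rho$ reflects that the coarsening $\rem_g\to\mm_g$ is surjective on points: a line bundle $\mathcal L\in\mathrm{Pic}(\mm_g)$ with $\rho(\mathcal L)=0$ must pull back trivially along the universal family over $\mathbf{Hilb}_{g,\nu}$, and since $\mathbf{Hilb}_{g,\nu}\to \mm_g$ is surjective by the GIT presentation recalled above, $\mathcal L$ must itself be trivial. The cokernel is torsion because every stable curve has a finite automorphism group; if $N$ is divisible by $|\mathrm{Aut}(C)|$ for every $[C]\in\mm_g$, then for any $\mathcal L'\in \mathrm{Pic}(\rem_g)$ the power $(\mathcal L')^{\otimes N}$ has trivial stabilizer action on every fiber and hence descends to $\mm_g$.

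For part (2), I would first use the excision sequence arising from the open immersion $\textbf{M}_g\hookrightarrow \rem_g$ whose complement is $\Delta_0\cup\cdots\cup\Delta_{[g/2]}$:
$$\bigoplus_{i=0}^{[g/2]} \mathbb Z\cdot\delta_i \longrightarrow \mathrm{Pic}(\rem_g) \longrightarrow \mathrm{Pic}(\textbf{M}_g)\longrightarrow 0.$$
Combined with Harer's theorem that $\mathrm{Pic}(\textbf{M}_g)=\mathbb Z\cdot\lambda$ for $g\geq 3$, this shows that $\lambda,\delta_0,\ldots,\delta_{[g/2]}$ generate $\mathrm{Pic}(\rem_g)$. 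For linear independence and freeness, I would construct test curves $B_0,\ldots,B_{[g/2]}$ and a further pencil-type curve $B$ producing a non-degenerate intersection matrix against the generators: for $i\geq 2$ the classical choice is to fix a pointed smooth curve $(Y_{g-i},q)$ of genus $g-i$ and vary the attachment point $p$ on a fixed smooth curve $X_i$ of genus $i$, which gives $B_i\cdot\delta_j=0$ for $j\neq i$, $B_i\cdot\delta_i=2-2i\neq 0$, and $B_i\cdot\lambda=0$. Separate families built from generic pencils and from curves with elliptic tails then handle the remaining classes $\delta_0,\delta_1$ and $\lambda$.

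The main obstacle is Harer's vanishing theorem for $H^2(\textbf{M}_g,\mathbb Q)$, a deep topological input on the cohomology of the mapping class group which I would import as a black box; the stated hypothesis $g\geq 3$ traces back to this. A secondary technical subtlety is the half-integer class $\delta_1=\tfrac{1}{2}[\Delta_1]$, requiring test curves chosen carefully (through elliptic tails) to extract the correct integer structure; the passage from rational generation to integer freeness likewise requires matching intersection computations that refine, rather than replace, the rational argument.
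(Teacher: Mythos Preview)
The paper does not give a self-contained proof of this theorem: it attributes part~(1) to Mumford \cite{M2} Lemma~5.8 and part~(2) to Arbarello--Cornalba \cite{AC2}, noting only that Harer's theorem $H^2(\cM_g,\mathbb Q)\cong\mathbb Q$ is the essential input. Your outline is exactly the standard argument found in those references: the descent-along-finite-automorphisms argument for the torsion cokernel in~(1) is Mumford's, and the excision-plus-Harer-plus-test-curves strategy for~(2) is Arbarello--Cornalba's.

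One minor point of attribution: what you call ``Harer's theorem that $\mathrm{Pic}(\textbf{M}_g)=\mathbb Z\cdot\lambda$'' is not Harer's statement per se. Harer computed $H^2(\cM_g,\mathbb Q)$ and the abelianization of the mapping class group; the translation into the integral statement about $\mathrm{Pic}(\textbf{M}_g)$ is part of Arbarello--Cornalba's contribution. The paper is careful to cite only the cohomological statement as Harer's, and your proposal would be cleaner if you separated these two steps.
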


From now on we shall identify $\mbox{Pic}(\rem_g)_{\mathbb Q}=\mbox{Pic}(\mm_g)_{\mathbb Q}$.
The first part of Theorem \ref{pic} was established by Mumford in \cite{M2} Lemma 5.8. The second part
is due to Arbarello and Cornalba \cite{AC2} and uses in an essential way Harer's theorem that $H^2(\cM_g, \mathbb Q)\cong \mathbb Q$. Unfortunately there is no purely algebraic proof of Harer's result yet.

\section{The canonical class of $\mm_g$}

In this section we explain the calculation of the canonical class of
$\mm_g$ in terms of the generators of $\mbox{Pic}(\rem_g)$, cf.
\cite{HM}. This calculation has been one of the spectacular successes of the Grothendieck-Riemann-Roch theorem.
In order to apply GRR one needs however a good modular interpretation of the cotangent bundle $\Omega_{\rem_g}^1$.
This is provided by Kodaira-Spencer theory. We first compute the
canonical class of the stack $\rem_g$, then we use the branched
cover $\rem_g \rightarrow \mm_g$ to obtain the canonical class of
the coarse moduli scheme $\mm_g$.

For every stable curve $[C]\in \mm_g$ we denote by $\Omega_C^1$ the sheaf of
K\"ahler differentials and by $\omega_C$ the locally free dualizing sheaf (see
\cite{Ba} for a down-to-earth introduction to the deformation theory of stable curves).
These sheaves sit in an exact sequence:
$$0\longrightarrow \mathrm{Torsion}(\Omega_C^1)\longrightarrow \Omega_C^1\longrightarrow \omega_C\stackrel{res}\longrightarrow
\bigoplus_{p\in \mathrm{Sing}(C)} \mathbb C_p\longrightarrow 0.$$
Kodaira-Spencer theory coupled with  Serre duality provides an
identification
$$T_{[C]}(\rem_g)=\mathrm{Ext}^1(\Omega_C^1, \OO_C)=H^0(C,
\omega_C\otimes \Omega_C^1)^{\vee}.$$ One can globalize this
observation and describe the cotangent bundle of $\rem_g$ as
follows. We denote by $\pi:\rem_{g, 1}\rightarrow \rem_g$ the
universal curve and we denote by $\omega_{\pi}$ the relative
dualizing sheaf and by $\Omega_{\pi}^1$ the sheaf of relative K\"ahler
differentials, respectively. Then by Kodaira-Spencer theory
we have the identification
$\Omega_{\rem_g}^1=\pi_*(\Omega_{\pi}^1\otimes \omega_{\pi})$ and
call the class $K_{\rem_g}=c_1(\Omega_{\rem_g}^1)\in
\mathrm{Pic}(\rem_g)$ the canonical class of the moduli stack
$\rem_g$. To compute the first Chern class of this push-forward
bundle we use the Grothendieck-Riemann-Roch theorem.

Suppose that we are given a proper map $f:X\rightarrow Y$ with
smooth base $Y$ and a sheaf $\F$ on $X$.  Then the
Grothendieck-Riemann-Roch (GRR) theorem reads
$$\mathrm{ch}\bigl(f_{!}(\F)\bigr)=f_*\bigl(\mathrm{ch}(\F)\cdot
\mathrm{td}(\Omega_f^1)\bigr)\in A^*(Y),\  \ \mbox{ where }$$
$$\mathrm{td}(\Omega_{f}^1):=1-\frac{c_1(\Omega_{f}^1)}{2}+\frac{c_1(\Omega_{f}^1)^2+c_2(\Omega_f^1)}{2}+
(\mbox{higher order terms})\ $$  denotes the Todd class.

\begin{remark}\label{higherhodge} One uses the GRR theorem to prove
Mumford's relation $$\kappa_1:=\pi_*(c_1(\omega_{\pi}^2))=12\lambda-\delta\in \mathrm{Pic}(\rem_g),$$
where $\delta:=\delta_0+\cdots+\delta_{[g/2]}$ is the total boundary (cf. \cite{M1} pg.
101-103). Similarly, for $n\geq 2$ we have the relation (to be used in Section 5), cf. \cite{M2} Theorem 5.10:
$$\lambda_1^{(n)}=\lambda+{n\choose 2}\kappa_1\in \mathrm{Pic}(\rem_g).$$
\end{remark}

To compute $K_{\rem_g}$ we set $f=\pi: \rem_{g, 1}\rightarrow
\rem_g$, $\F=\Omega_{\pi}^1\otimes \omega_{\pi}$, hence
$\pi_*\F=\Omega_{\rem_g}^1$ and $R^i \pi_*\F=0$ for $i\geq 1$.
Using Grothendieck-Riemann-Roch we can write:
\begin{equation}\label{can}
K_{\rem_g}=\pi_*\Bigl[\Bigl(1+c_1(\F)+\frac{c_1^2(\F)-2c_2(F)}{2}+\cdots\Bigr)\cdot
\Bigl(1-\frac{c_1(\Omega_{\pi}^1)}{2}+\frac{c_1(\Omega_{\pi}^1)^2+c_2(\Omega_{\pi}^1)}{12}+\cdots\Bigr)\Bigr]_1.
\end{equation}
Next we determine the Chern classes of $\F$. Suppose
$[f:X\rightarrow S]\in \rem_g(S)$ is a family of stable curves
such that both $X$ and $S$ are smooth projective varieties. Then
$\mathrm{codim}(\mathrm{Sing}(f), X)=2$ and the sheaf homomorphism
$\Omega_{f}^1\rightarrow \omega_f$ induces an isomorphism
$\Omega_f^1=\omega_f\otimes \mathcal{I}_{\mathrm{Sing}(f)}$ (in
particular, $\Omega_f^1$ is not locally free). This claim follows
from a local analysis around each point $p\in \mathrm{Sing}(f)$.
Since the versal deformation space of a node is $1$-dimensional, there exist affine coordinates $x, y$ on the fibres of
$f$ and an affine coordinate $t$ on $S$, such that locally around
$p$, the variety $X$ is given by the equation $xy=t^n$ for some
integer $n\geq 1$. By direct calculation in a neighbourhood
of $p$,
$$\Omega_f^1=\bigl(f^*\OO_C\cdot dx+f^*\OO_C \cdot
dy\bigr)/\bigl(x dy+y dx)\cdot \OO_C.$$ Similarly, the dualizing
sheaf $\omega_f$ is the free $\OO_X$ module generated by the
meromorphic differential $\eta$ given by $dx/x$ on the locus $x\neq
0$ and by $-dy/y$ on the locus $y\neq 0$, hence we find that locally
$\Omega_f^1=\mathcal{I}_{x=y=t=0}\cdot \omega_f$, which proves our
claim.

The sheaves $\omega_f$ and $\Omega_f^1$ agree in codimension $1$,
thus $c_1(\Omega_f^1)=c_1(\omega_f)$. An application of
Grothendieck-Riemann-Roch for the inclusion
$\mathrm{Sing}(f)\hookrightarrow X$, shows that
$c_2(\Omega_f^1)=[\mathrm{Sing}(f)]$. Then by the Whitney formula we
obtain that $c_1(\F)=2c_1(\omega_f)$ and
$c_2(\F)=[\mathrm{Sing}(f)]$. Since this analysis holds for an
arbitrary family of stable curves, the same relation must hold
for the universal curve over $\rem_g$. Returning to (\ref{can}),
we find the following formula:
$$K_{\rem_g}=\frac{13}{12}\pi_*\bigl(c_1(\omega_{\pi})^2)-\frac{11}{12}
\pi_*\bigl[\mathrm{Sing}(\pi)\bigr]=\frac{13}{12}\kappa_1-\frac{11}{12}\delta=13\lambda-2\delta\in
\mathrm{Pic}(\rem_g).$$

\begin{theorem}\label{canonicalcoarse}
For $g\geq 4$, the canonical class of the coarse moduli space $\mm_g$ is given by the formula
$$K_{\mm_g}\equiv 13\lambda-2\delta_0-3\delta_1-2\delta_2-\cdots-2\delta_{[g/2]}\in \mathrm{Pic}(\rem_g).$$
\end{theorem}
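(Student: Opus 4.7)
The plan is to derive $K_{\mm_g}$ by combining the formula $K_{\rem_g} = 13\lambda - 2\delta$ obtained in the preceding calculation with a Riemann-Hurwitz-type correction that compares the canonical class of the moduli stack to that of its coarse moduli scheme under the natural map $q: \rem_g \to \mm_g$. Pluricanonical forms on $\mm_g$ pull back to pluricanonical forms on $\rem_g$, and the discrepancy $K_{\rem_g} - q^* K_{\mm_g}$ is supported precisely on those codimension-one loci where the generic stabilizer of the stack acts non-trivially on the normal bundle of the locus. So the task reduces to inventorying such loci and computing the local contribution from each.

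First I would go through each boundary divisor and determine the generic stabilizer and its action on the normal bundle. A generic $[C] \in \Delta_0$ is an irreducible one-nodal curve of geometric genus $g-1$ with no non-trivial automorphisms. For $2 \leq i \leq [g/2]$ (which requires $g \geq 4$), a generic $[C_1 \cup_p C_2] \in \Delta_i$ has both components general smooth curves of genus $\geq 2$ meeting at a general node, and hence has trivial automorphism group. The exceptional divisor is $\Delta_1$: a generic point $[E \cup_p C] \in \Delta_1$ (with $E$ an elliptic curve and $C$ a general curve of genus $g-1$) carries a non-trivial $\mathbb Z/2$ coming from the elliptic involution on $E$ fixing $p$. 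This involution acts trivially on the deformations $H^1(E, T_E) \oplus H^1(C, T_C)$ of the two components, but acts as $-1$ on the smoothing parameter of the node, which is naturally $T_p E \otimes T_p C$, since it acts as $-1$ on $T_p E$ and trivially on $T_p C$. One should also confirm that any automorphism loci inside $\cM_g$ (the hyperelliptic, bielliptic, etc.\ loci) either have codimension $\geq 2$ or carry stabilizers acting trivially on the normal bundle; for $g \geq 4$ this is standard.

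With $\Delta_1$ identified as the unique contributing divisor, the local computation is a direct Riemann-Hurwitz exercise. \'Etale-locally at a generic point of $\Delta_1$, one has $\rem_g = [\mathrm{Spec}\,k[t, x_2, \ldots, x_{3g-3}] / (\mathbb Z/2)]$ where $t$ is the node-smoothing parameter and the involution acts by $t \mapsto -t$, $x_i \mapsto x_i$. The coarse moduli is $\mathrm{Spec}\,k[t^2, x_2, \ldots]$, with parameter $s = t^2$. The generator $ds \wedge dx_2 \wedge \cdots \wedge dx_{3g-3}$ of $\omega_{\mm_g}$ pulls back to $2t\, dt \wedge dx_2 \wedge \cdots \wedge dx_{3g-3}$, which vanishes to order one along $\{t=0\}$. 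Hence $q^* K_{\mm_g} = K_{\rem_g} - \delta_1$ globally in $\mathrm{Pic}(\rem_g)_{\mathbb Q}$. Substituting $K_{\rem_g} = 13\lambda - 2\delta$ yields
$$K_{\mm_g} = 13\lambda - 2\delta_0 - 3\delta_1 - 2\delta_2 - \cdots - 2\delta_{[g/2]},$$
which is the desired formula.

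The main technical obstacle is the codimension-one automorphism analysis: one must verify exhaustively that no locus other than $\Delta_1$ contributes a correction, i.e.\ that every irreducible codimension-one automorphism locus in $\rem_g$ either has trivial generic stabilizer or carries a stabilizer acting trivially on its normal bundle. The hypothesis $g \geq 4$ is essentially what ensures both that the $\Delta_i$ for $i \geq 2$ are genuinely present and that the relevant general smooth components are rigid; for small $g$ one would need a more delicate bookkeeping of the hyperelliptic contribution, as already indicated in \cite{HM}, Theorem~1.
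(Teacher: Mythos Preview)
Your proposal is correct and follows essentially the same approach as the paper: both derive $K_{\mm_g}$ from $K_{\rem_g}=13\lambda-2\delta$ via the Riemann--Hurwitz formula for the map $\rem_g\to\mm_g$, identifying $\Delta_1$ as the unique divisor along which the map is simply branched (due to the elliptic involution on the generic tail). Your write-up is more explicit than the paper's two-line proof in spelling out the local model and the codimension-one automorphism analysis, but the argument is the same.
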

\begin{proof} We consider the morphism $\epsilon:\rem_g\rightarrow \mm_g$ which is simply branched along the divisor is the divisor
$\Delta_1\subset \mm_g$, hence $\epsilon^*([\Delta_1])=2\delta_1\in \mathrm{Pic}(\rem_g)$. The Riemann-Hurwitz formula gives that $K_{\rem_g}=\epsilon^*(K_{\mm_g})+\delta_1$ which finishes the proof.
\end{proof}
\begin{remark} A slight difference occurs in the case $g=3$. The morphism $\epsilon:\rem_3\rightarrow \mm_3$ is simply branched along both the boundary  $\Delta_1$ and the closure  of the hyperelliptic locus $$\cM_{3, 2}^1:=\{[C]\in \cM_3: W^1_2(C)\neq \emptyset\}.$$ It follows that
$K_{\rem_3}=\epsilon^*K_{\mm_3}+\delta_1+\rho ([\mm_{3, 2}^1])$,
hence $K_{\mm_3}=4\lambda-\delta_0$.
\end{remark}

Using Theorem \ref{canonicalcoarse}, we reformulate the problem of determining the Kodaira dimension of $\mm_g$
in terms of effective divisors: A sufficient condition for $\mm_g$ to be of general type is the existence of an effective divisor
$$D\equiv a\lambda-b_0\delta_0-\cdots-b_{[g/2]}\delta_{[g/2]}\in \mbox{Pic}(\mm_g),$$ with coefficients satisfying the following inequalities
\begin{equation}\label{inequ}
\frac{a}{b_0}<\frac{13}{2}, \ \mbox{  }\  \frac{a}{b_1}\leq \frac{13}{3} \ \mbox{ and }\  \ \frac{a}{b_i}\leq \frac{13}{2} \ \mbox{ for } \ 2\leq i\leq [g/2].
\end{equation}

This formulation using (\ref{inequ}) clearly suggests the definition of the following numerical invariant of the moduli space cf. \cite{HMo}:
If $\delta:=\delta_0+\cdots +\delta_{[g/2]}$ is the class of the total
boundary and $\mbox{Eff}(\mm_g)\subset \mbox{Pic}(\mm_g)_{\mathbb R}$ denotes the
 cone of effective divisors, then we can
define the \emph{slope function} $s:\mbox{Eff}(\mm_g) \rightarrow
\mathbb R\cup \{\infty\}$ by the formula
$$s(D):= \mbox{inf }\{\frac{a}{b}:a,b>0 \mbox{ such that }
a\lambda-b\delta-D\equiv \sum_{j=0}^{[g/2]} c_j\delta_j,\mbox{ where
}c_j\geq 0\}.$$ From the definition it follows that $s(D)=\infty$
unless $D\equiv a\lambda-\sum_{j=0}^{[g/2]} b_j\delta_j$ with
$a,b_j\geq 0$ for all $j$.  It is also well-known that $s(D)<\infty$
for any $D$ which is the closure of an effective divisor on
$\mathcal{M}_g$. In this case, one has that
$$s(D)=\frac{a}{\mbox{min}_{j=0}^{[g/2]}b_j}.$$ We denote by $s(\mm_g)$ the
\emph{slope of the moduli space $\mm_g$}, defined as
$$s(\mm_g):=\mbox{inf
}\{s(D):D\in \mbox{Eff}(\mm_g)\}.$$

\begin{proposition}\label{slope1}
We fix a moduli space $\mm_g$ with $g\geq 4$. If $s(\mm_g)<13/2$ then $\mm_g$ is of general type. If $s(\mm_g)>13/2$ then
the Kodaira dimension of $\mm_g$ is negative.
\end{proposition}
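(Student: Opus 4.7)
The plan is to compare an effective divisor of small slope directly against the explicit formula for the canonical class from Theorem \ref{canonicalcoarse}, namely $K_{\mm_g}\equiv 13\lambda-2\delta_0-3\delta_1-2\sum_{i\ge 2}\delta_i$. Observe that a positive multiple $nK_{\mm_g}$ has the shape $13n\lambda-2n\delta_0-3n\delta_1-2n\sum_{i\ge 2}\delta_i$, whose slope in the sense defined just above is $13n/(2n)=13/2$; so $13/2$ is the ``slope of the canonical class'', and the proposition is the assertion that $K_{\mm_g}$ is big precisely when some effective class strictly undercuts that threshold.

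For the first assertion, suppose $s(\mm_g)<13/2$ and pick an effective divisor $D\equiv a\lambda-\sum b_j\delta_j$ with $a/\min_j b_j=s(D)<13/2$. I want to find $\alpha>0$ such that $K_{\mm_g}=\alpha D+\beta\lambda+\sum\gamma_j\delta_j$ with $\beta\ge 0$ and $\gamma_j\ge 0$. Matching coefficients this amounts to $\alpha a\le 13$ and $\alpha b_j\ge c_j$ for each $j$, where $c_j=2$ for $j\ne 1$ and $c_1=3$. The choice $\alpha=2/\min_j b_j$ gives $\alpha a<13$ and $\alpha b_j\ge 2$ for every $j$, so all conditions are met except possibly at $\delta_1$, where one needs the additional bound $b_1\ge\frac{3}{2}\min_j b_j$. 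This last condition is automatic for all the geometric divisors appearing in this paper, or alternatively one carries out the entire argument on the stack $\rem_g$, where $K_{\rem_g}=13\lambda-2\delta$ treats all boundary components symmetrically, and then descends to the coarse space using the ramification of $\rem_g\to\mm_g$ along $\Delta_1$. Once the expression is in hand, $K_{\mm_g}$ is exhibited as a sum of $\alpha D$ (effective), a positive multiple of $\lambda$ (big on $\mm_g$, for example because $\lambda^{3g-3}>0$ or because $\lambda$ carries plenty of sections pulled back from Siegel modular forms via the Torelli map), and nonnegative boundary contributions. Hence $K_{\mm_g}$ is big. General type then follows from the extension of pluricanonical forms from the smooth locus of $\mm_g$ to any desingularization, which is precisely the Reid-Shepherd-Barron-Tai criterion verified in Theorem~1 of \cite{HM}.

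For the converse, suppose $s(\mm_g)>13/2$. If $\kappa(\mm_g)\ge 0$, then $nK_{\mm_g}$ is linearly equivalent to an effective divisor for some $n>0$, and by the canonical class formula this effective divisor has expansion $13n\lambda-2n\delta_0-3n\delta_1-2n\sum_{i\ge 2}\delta_i$, of slope $13n/(2n)=13/2$. This would force $s(\mm_g)\le 13/2$, contrary to hypothesis, so $\kappa(\mm_g)=-\infty$.

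The main obstacle in the forward implication is the asymmetry at $\delta_1$: the coefficient $3$ arises from the simple ramification of $\rem_g\to\mm_g$ along the elliptic-tail divisor, and it means that a naive effective divisor of slope less than $13/2$ does not by itself force general type unless its $\delta_1$-coefficient is also large enough. In practice this, together with the more substantial issue of extension of pluricanonical forms across the singular locus of $\mm_g$, is handled by the input from \cite{HM}; the coefficient-matching argument above is then just the formal step that converts the divisor-class inequality into an actual statement about the Kodaira dimension of $\mm_g$.
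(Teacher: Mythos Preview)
Your argument follows essentially the same route as the paper's proof: write $K_{\mm_g}=\alpha\lambda+\beta D+\sum_{j\ge 1}c_j\delta_j$ with $\alpha,\beta>0$ and $c_j\ge 0$, then invoke bigness of $\lambda$ and the extension result from \cite{HM}. You are in fact more thorough than the paper on two points: you explicitly flag the $\delta_1$ asymmetry (which the paper's one-line proof silently passes over), and you supply the argument for the second implication $s(\mm_g)>13/2\Rightarrow\kappa(\mm_g)=-\infty$, which the paper's proof omits entirely.
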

\begin{proof} If there exists $D\in \mbox{Eff}(\mm_g)$ with $s(D)<s(K_{\mm_g})$, it follows that
one can write $K_{\mm_g}\equiv \alpha\cdot \lambda +\beta \cdot D +\sum_{j=1}^{[g/2]} c_j\delta_j$, where
$\alpha, \beta>0$ and $c_j\geq 0$ for $1\leq j\leq [g/2]$. Since the class $\lambda \in \mbox{Eff}(\mm_g)$ is big,
we obtain that $K_{\mm_g}\in \mbox{int } \mbox{Eff}(\mm_g)$, hence by definition $\mm_g$ is a variety of general type.
\end{proof}
Any explicit calculation of a divisor class on $\mm_g$ provides an upper bound for $s(\mm_g)$. Estimating how small slopes of effective divisors on $\mm_g$ can be, is the subject of the Harris-Morrison Slope "Conjecture" \cite{HMo}:

\begin{conjecture}\label{slopeconjecture}
$$s(\mm_g)\geq
6+\frac{12}{g+1}.$$
\end{conjecture}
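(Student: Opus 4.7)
The plan is to bound the slope from below by testing arbitrary effective divisors against a well-chosen one-parameter family of stable curves that moves on $\mm_g$. Concretely, we seek a \emph{sweeping} curve $B \subset \mm_g$, i.e.\ one whose translates cover $\mm_g$ (or at least a Zariski-dense open subset), and for which one can explicitly compute $B \cdot \lambda$ and $B \cdot \delta_j$ for each $j$. If $B$ sweeps, then for every $D \equiv a\lambda - \sum_{j=0}^{[g/2]} b_j \delta_j \in \mathrm{Eff}(\mm_g)$ we have $B \cdot D \geq 0$, and since $B \cdot \delta_j \geq 0$ automatically, the definition of the slope function yields
$$s(D) \;\geq\; \frac{B \cdot \delta}{B \cdot \lambda}.$$
The target bound $6 + 12/(g+1)$ is therefore reduced to producing $B$ realizing exactly this ratio.

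The natural candidate for $B$ comes from a \emph{Lefschetz pencil on a polarized K3 surface}. Take a general $(S,L) \in \F_g$ with $L^2 = 2g-2$, choose a general pencil $\ell \subset |L| \cong \PP^g$, and blow up the $2g-2$ base points to get a fibration $\widetilde S \to \PP^1$ whose general fiber is a smooth curve of genus $g$ and whose singular fibers are $1$-nodal irreducible curves. This induces a morphism $B = \PP^1 \to \mm_g$. Standard calculations on K3 surfaces (via Noether's formula and the topological Euler characteristic of $\widetilde S$) give the key numerics $B \cdot \lambda = g+1$, $\; B \cdot \delta_0 = 6g + 18$, and $B \cdot \delta_j = 0$ for $1 \le j \le [g/2]$, so that $s(B) = (6g+18)/(g+1) = 6 + 12/(g+1)$ exactly as predicted.

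The main obstacle, and the reason the statement is still only a conjecture, is precisely the requirement that $B$ sweep. Since $\dim \K_g = 19 + g$, the locus of K3-section curves fills $\mm_g$ only for $g \leq 11$; for $g \geq 13$ one has $\K_g \subsetneq \mm_g$, and for $g = 12$ the count is borderline. Beyond the K3 threshold the argument splits into two regimes: for a divisor $D \in \mathrm{Eff}(\mm_g)$ whose restriction to a general K3 pencil remains effective (i.e.\ $D \not\supset \K_g$), the intersection bound above applies verbatim; for divisors with $\K_g \subset D$, one needs an auxiliary test, e.g.\ moving curves living inside $\K_g$ itself. Lazarsfeld's theorem that a general curve on a general K3 surface is Brill-Noether generic keeps this regime viable, but controlling all possible components of $D$ on $\K_g$ is the decisive difficulty.

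A complete proof would therefore combine three ingredients: (i) the K3-pencil slope computation above; (ii) a proof that pencils of this type sweep $\mm_g$ whenever the numerics allow (low genus), or sweep at least the complement of $\K_g$ in a controlled sense (high genus); and (iii) a separate lower bound on $s(D)$ for divisors containing $\K_g$, conceivably via limit linear series on chains of elliptic curves in the spirit of Eisenbud-Harris, or via test surfaces through the boundary. Step (iii) is where I expect the main resistance: as Farkas's Koszul divisors show elsewhere in this survey, the cone $\mathrm{Eff}(\mm_g)$ is richer than the K3 construction detects, and any strictly lower bound on divisors containing $\K_g$ must exploit structural positivity of the boundary strata beyond what $B \cdot D \geq 0$ yields.
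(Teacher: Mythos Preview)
The statement you are attempting to prove is a \emph{conjecture} that the paper immediately declares to be \emph{false}: right after stating it, the paper writes that ``Conjecture \ref{slopeconjecture} is false and counterexamples have been found in \cite{FP}, \cite{F2}, \cite{F3}, \cite{Kh}.'' So there is no proof in the paper to compare against, and your outline cannot be completed because the inequality does not hold for all $g$.

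You have in fact put your finger on exactly why it fails. Your step (iii) --- bounding $s(D)$ from below for divisors $D$ containing $\K_g$ --- is not a technical gap but an actual obstruction: the Koszul divisors $\overline{\cZ}_{g,i}$ constructed in Section 6 of the paper contain $\K_g$ (by Proposition \ref{conditie3} and its consequences) and have slope strictly less than $6+12/(g+1)$. Your own last paragraph anticipates this when you note that ``Farkas's Koszul divisors show \ldots\ the cone $\mathrm{Eff}(\mm_g)$ is richer than the K3 construction detects.'' What your argument \emph{does} correctly establish is the content of Proposition \ref{nef}: any effective divisor violating the slope bound must contain $\K_g$. That is the true positive result here, and the paper proves it by exactly the Lefschetz-pencil computation you carried out.
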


The conjecture would obviously imply that $\kappa(\mm_g)=-\infty$ for $g\leq 22$. However Conjecture 
\ref{slopeconjecture} is false and counterexamples have been found in \cite{FP}, \cite{F2}, \cite{F3}, \cite{Kh}-see also Section 6.1
of this paper.

There is a somewhat surprising connection between the Slope Conjecture and curves
sitting on $K3$ surfaces. This has been first observed in \cite{FP}:  Given
$g\geq 1$ we consider a Lefschetz pencil of curves of genus $g$
lying on a general $K3$ surface of degree $2g-2$ in $\PP^g$. This
gives rise to a curve $B$ in the moduli space $\mm_g$.  These pencils $B$ fill
up the entire moduli space $\mm_g$ for $g\leq 9$ or $g=11$ (see
\cite{Mu1}), and the divisor $\KK_{10}$  of curves lying on a $K3$ surface
for $g=10$. When $g\geq 13$, the pencils $B$ fill up the locus
$\KK_g\subset \mm_g$ of $K3$ sections of genus $g$ and $\mbox{dim}(\KK_g)=19+g$.

\begin{lemma}\label{nos1}
We have the formulas $B\cdot \lambda =g+1$, $B\cdot \delta_0=
6g+18$ and $B\cdot \delta_j=0$ for $j\neq 0$.
\end{lemma}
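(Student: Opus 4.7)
The plan is to realize $B$ as the base of an explicit family of stable curves and then compute each intersection number via classical surface invariants.

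First I would set up the family. The base locus of the Lefschetz pencil is $H^2 = 2g-2$ simple base points on $S$; blowing these up produces a smooth surface $\varphi:\widetilde S\rightarrow S$ together with a morphism $\pi:\widetilde S\rightarrow \PP^1=B$ whose fibres are the curves of the pencil. The exceptional divisors $E_1,\ldots,E_{2g-2}$ are sections of $\pi$, the general fibre $F$ has $F^2=0$, and $K_{\widetilde S}=\varphi^*K_S+\sum E_i=\sum E_i$ since $K_S\equiv 0$. Because the pencil is Lefschetz, every singular fibre has exactly one node, and because for a general polarized $K3$ surface one has $\mathrm{Pic}(S)=\ZZ\cdot H$, no hyperplane section of $S$ can be reducible; hence every singular fibre is an irreducible nodal curve. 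This immediately gives $B\cdot\delta_j=0$ for $j\geq 1$.

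Next I would count the singular fibres $N$, which equals $B\cdot\delta_0$. A general fibre has topological Euler characteristic $2-2g$, while an irreducible $1$-nodal curve of arithmetic genus $g$ has Euler characteristic $3-2g$. Since blowing up $2g-2$ points changes $e$ by $+1$ each time and $e(S)=24$, one has $e(\widetilde S)=22+2g$. The multiplicativity of $e$ for a fibration with isolated singular fibres yields
\[
22+2g=e(\widetilde S)=2(2-2g)+N\bigl((3-2g)-(2-2g)\bigr)=4-4g+N,
\]
so $N=6g+18$ and $B\cdot\delta_0=6g+18$.

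Finally I would compute $B\cdot\lambda$ using the identity $12\lambda=\kappa_1+\delta$ from Remark \ref{higherhodge} (which holds for any semistable family). Writing $K_{\widetilde S/B}=K_{\widetilde S}-\pi^*K_B=\sum E_i+2F$ and using $E_i^2=-1$, $E_i\cdot F=1$, $F^2=0$, one finds
\[
B\cdot\kappa_1=K_{\widetilde S/B}^{\,2}=-(2g-2)+4(2g-2)=6g-6.
\]
Therefore $12\,(B\cdot\lambda)=(6g-6)+(6g+18)=12g+12$, giving $B\cdot\lambda=g+1$.

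The only non-routine input is the structural fact that a general Lefschetz pencil on a general polarized $K3$ of degree $2g-2$ has only irreducible $1$-nodal degenerations; this is the Picard-rank-one argument and is where I expect the most care is needed. Once that is in place, the three numerical claims follow from Euler characteristic bookkeeping and Mumford's formula $12\lambda=\kappa_1+\delta$.
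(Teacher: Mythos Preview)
The paper does not actually supply a proof of Lemma \ref{nos1}; it is stated without argument (the computation originates in \cite{FP}). Your argument is correct and is precisely the standard one: blow up the $2g-2$ base points, use $\mathrm{Pic}(S)=\ZZ\cdot H$ for a general polarized $K3$ to rule out reducible fibres, count nodal fibres by the topological Euler characteristic formula for a fibred surface, and recover $B\cdot\lambda$ from Mumford's relation $12\lambda=\kappa_1+\delta$ together with $\kappa_1=K_{\widetilde S/\PP^1}^{\,2}$. Every step checks out numerically, and your remark that the irreducibility of the singular fibres is the only point requiring genuine input (the Picard-rank-one hypothesis on $S$) is exactly right.
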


 It will turn out that the locus $\mathcal{K}_g$ becomes an obstruction for an effective divisor on $\mm_g$ to have small slope. The next result shows that in order to construct geometric divisors on $\mm_g$ having small slope, one must search for geometric
conditions which have codimension $1$ in moduli, and which are a relaxation of the condition that a curve be a section of a $K3$ surface. This philosophy governs the construction of Koszul divisors on $\mm_g$ carried out in \cite{F2}, \cite{F3}.
\begin{proposition}\label{nef}
Let $D$ be the closure in $\mm_g$ of an effective divisor on $\cM_g$. If the inequality $s(D)<6+12/(g+1)$ holds, then $D$ contains the locus $\KK_g$ of curves lying on $K3$ surfaces.
\end{proposition}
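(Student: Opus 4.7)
The approach is to test $D$ against the one-parameter families $B$ of Lefschetz pencils of $K3$ sections introduced just before Lemma \ref{nos1}. Since $D$ is the closure of an effective divisor on $\cM_g$, the discussion preceding the statement lets us expand
\[
D \equiv a\lambda - \sum_{j=0}^{[g/2]} b_j\, \delta_j,
\]
with $a>0$, $b_j>0$ for every $j$, and $s(D) = a/\min_j b_j$; in particular $a/b_0 \leq s(D)$.

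By Lemma \ref{nos1} I compute directly
\[
B \cdot D \;=\; a(g+1) - b_0(6g+18).
\]
The assumption $s(D) < 6 + 12/(g+1) = (6g+18)/(g+1)$ thus forces $a/b_0 < (6g+18)/(g+1)$, so $B\cdot D < 0$ for every pencil of this type. Suppose for contradiction that $\KK_g \not\subset D$; since both are closed in $\mm_g$, we may then pick a generic $[C] \in \KK_g$ with $[C] \notin D$. By the geometric description of $\KK_g$ as swept out by Lefschetz pencils on polarized $K3$ surfaces (Mukai for $g\leq 9,11$, and \cite{CLM} for $g\geq 13$), through a generic such $[C]$ there passes a pencil $B$ of the form described in Lemma \ref{nos1}. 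Since $[C] \in B$ but $[C]\notin D$, the curve $B$ is not contained in $\mbox{Supp}(D)$, and therefore $B\cdot D \geq 0$ by effectivity of $D$, contradicting the strict negativity just established. Hence $\KK_g \subset D$.

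The content of the proof is almost entirely packed into Lemma \ref{nos1} and into the geometric fact that Lefschetz pencils of $K3$ sections cover $\KK_g$; once these are granted, the argument collapses to the elementary positivity principle that an irreducible curve not contained in the support of an effective divisor meets it non-negatively. The only verification I would double-check is that, for a generic point $[C]\in \KK_g$, one can choose the pencil $B$ through $[C]$ so that all singular fibers are irreducible and nodal (so that $B\cdot \delta_j = 0$ for $j\geq 1$), but this is standard for a generic Lefschetz pencil on a generic polarized $K3$ surface and is in any case built into Lemma \ref{nos1}. So there is no real obstacle beyond bookkeeping.
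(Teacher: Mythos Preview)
Your proof is correct and follows essentially the same approach as the paper: compute $B\cdot D$ via Lemma \ref{nos1}, observe that $s(D)<6+12/(g+1)$ forces $B\cdot D<0$, and conclude that every such Lefschetz pencil lies in $D$, hence $\KK_g\subset D$. Your contrapositive phrasing (choose a generic $[C]\in\KK_g\setminus D$ and a pencil through it) is just a reformulation of the paper's direct statement that $B\cdot D<0\Rightarrow B\subset D$ followed by varying $B$ and $S$.
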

\begin{proof}
We consider as above the curve $B\subset
\mm_g$
corresponding to a Lefschetz
pencil of curves of genus $g$ on a general $K3$ surface $S$.
From
 Lemma \ref{nos1} we obtain that
$$\frac{B\cdot \delta}{B\cdot \lambda}=6+\frac{12}{g+1}>s(D),$$ which implies that $B \cdot D<0$ hence $B\subset D$. By varying both $B$ and $S$ inside the moduli space of polarized $K3$ surfaces, we obtain
 the inclusion $\KK_g\subset D$.
\end{proof}

Bounding  $s(\mm_g)$ from below, remains  one of the main open problems in the field. There is a straightforward (probably far from optimal) way of obtaining a bound on $s(\mm_g)$ by writing down any \emph{moving curve} $R\subset \mm_g$, that is, a curve which moves in an algebraic family $\{R_t\}_{t\in T}$
of curves on $\mm_g$ such that the set $\bigcup_{t\in T} R_t$ is dense in $\mm_g$. One instance of a moving curve is a \emph{complete intersection curve} $R=H_1\cap \cdots \cap H_{3g-4}$, where $H_i$ are numerically effective divisors on $\mm_g$.

If $R\subset \mm_g$ is a moving curve, then
 $R\cdot D\geq 0$, for any $D\in \mbox{Eff}(\mm_g)$, hence
$$s(\mm_g)\geq \frac{R\cdot \delta}{R\cdot \lambda}.$$ Obviously writing down and then computing the invariants of a moving curve in $\mm_g$ can be difficult.
An experimental bound $s(\mm_g)\geq O(1/g)$ was initially obtained in \cite{HMo} using Hurwitz schemes of covers of $\PP^1$. A similar (but nevertheless different) bound is obtained by D. Chen \cite{C} using covers of elliptic curves.
\subsection{Pandharipande's lower bound on $s(\mm_g)$.}\hfill

Recently, Pandharipande \cite{P} has found a short way of proving the inequality 
$$s(\mm_g)\geq O\bigl(\frac{1}{g}\bigr),$$ in a way that uses  only descendent integrals over
$\mm_{g, n}$  as well as some calculations on Hodge integrals that appeared in \cite{FaP}. We explain the main idea of his proof.

One constructs a covering curve for $\mm_{g}$ by pushing forward products of nef tautological classes on moduli spaces $\mm_{g, n}$ via the morphisms forgetting the marked points. In the simplest incarnation of this method, one considers the universal curve $\pi:\mm_{g, 1}\rightarrow \mm_g$ curve and the nef tautological class $\psi_1=c_1(\omega_{\pi})\in A^1(\mm_g)$. Then $\pi_*(\psi_1^{3g-4})\in A_1(\mm_g)$ is a covering curve class,
in particular for every divisor $$D\equiv a\lambda-\sum_{i=0}^{[g/2]} b_i\delta_i\in \mbox{Pic}(\mm_g)$$ which does not contain boundary components, we have that $\pi_*(\psi_1^{3g-4})\cdot D\geq 0$, hence,
$$s(D)\geq \frac{a}{b_0}\geq \frac{\pi_*(\psi_1^{3g-3})\cdot \delta_0}{\pi_*(\psi_1^{3g-3})\cdot \lambda}=\frac{\int_{\mm_{g, 1}}\delta_0\cdot \psi_1^{3g-3}}{\int_{\mm_{g, 1}} \lambda\cdot \psi_1^{3g-3}}.$$
We outline the calculation of the numerator appearing in this fraction.
 For the degree $2$ natural map $$\epsilon:\mm_{g-1, 3}\rightarrow \mm_{g, 1},\  \mbox{ }\ \epsilon([C, p, x, y]):=\bigl[\frac{C}{x\sim y}, p\bigr],$$ one has that $\epsilon_*([\mm_{g-1, 3}]=2\delta_0\in A^1(\mm_{g, 1})$, hence via the push-pull formula we find,
$$\int_{\mm_{g, 1}}\delta_0 \cdot \psi_1^{3g-3}=\frac{1}{2} \int_{\mm_{g-1, 3}}\psi_1^{3g-3}=\frac{1}{2}\int_{\mm_{g-1, 1}}\psi_1^{3g-5}.$$
The last equality here is an easy consequence of the \emph{string equation} \cite{W}
$$\int_{\mm_{g, n+1}} \psi_1^{a_1} \cdots \psi_n^{a_n}=\sum_{i=1}^n \int_{\mm_{g, n}} \psi_1^{a_1}\cdots \psi_i^{a_i-1}\cdots \psi_n^{a_n},$$
where $a_1, \ldots, a_n\geq 0$ such that $\sum_{i=1}^n a_i=3g-2+n$. 

The following evaluation follows by putting together \cite{M6} Section 6 and \cite{FaP} Section 3. For the sake of completeness we outline a proof:
\begin{lemma}\label{psiint}
$$\int_{\mm_{g, 1}} \psi_1^{3g-2}=\frac{1}{24^g\cdot  g!}.$$
\end{lemma}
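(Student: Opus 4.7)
The identity is the Witten-Kontsevich ``one-point function'' $\langle \tau_{3g-2}\rangle_g$, and my plan is to proceed by induction on $g$. The base case $g=1$ is the classical identity $\int_{\mm_{1,1}}\psi_1=1/24$, provable directly on the compactified $j$-line: the factor $1/24$ arises from the generic $\mathbb Z/2$ automorphism group of elliptic curves together with the extra $\mathbb Z/4,\mathbb Z/6$ stabilizers at $j=1728,0$, and can be cross-checked through the identity $\lambda=\psi_1$ on $\mm_{1,1}$ combined with $\mathrm{Pic}(\mm_{1,1})_{\mathbb Q}=\mathbb Q\cdot \lambda$.

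For the inductive step, my first instinct is to invoke the Witten-Kontsevich theorem, according to which the full generating series of $\psi$-integrals is a tau-function for the KdV hierarchy. Isolating the monomial $t_{3g-2}$ in the simplest KdV flow and using the string and dilaton equations as constraints should yield a genus-lowering recursion of the shape
$$\int_{\mm_{g,1}}\psi_1^{3g-2}=\frac{1}{24\,g}\int_{\mm_{g-1,1}}\psi_1^{3g-5},$$
which combined with the base case immediately gives $1/(24^g g!)$ by induction, since $\tfrac{1}{24 g}\cdot \tfrac{1}{24^{g-1}(g-1)!}=\tfrac{1}{24^g g!}$.

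The main obstacle is that the KdV equations organize \emph{all} descendant integrals simultaneously, and separating out the one-point contribution transparently requires either a careful combinatorial extraction or passage through Hodge integrals. A more hands-on alternative, in the spirit of the two references cited in the text, would be to first relate $\psi_1^{3g-2}$ to an auxiliary integral involving $\lambda_g$ via Mumford's Grothendieck-Riemann-Roch expression for $\mathrm{ch}(\mathbb{E})$ (Remark \ref{higherhodge}), then evaluate the resulting Hodge integral by virtual localization on a suitable space of stable maps to $\mathbb{P}^1$: the fixed loci are products of lower-dimensional moduli spaces, and the Euler class of the virtual normal bundle produces precisely the combinatorial factor $1/(24g)$. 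Either way, once a clean reduction to genus $g-1$ is in place, the closed form $1/(24^g g!)$ is essentially forced by the base case.
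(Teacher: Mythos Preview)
Your proposal is a strategy outline, not a proof. You write down the correct recursion $\int_{\mm_{g,1}}\psi_1^{3g-2}=\frac{1}{24g}\int_{\mm_{g-1,1}}\psi_1^{3g-5}$ and verify that it yields the closed form, but you do not derive it: you say KdV ``should yield'' it, then immediately concede that isolating the one-point contribution is ``the main obstacle'' and leave it unresolved. The localization alternative is likewise only gestured at. Invoking the full Witten--Kontsevich theorem is of course legitimate, but it is far heavier than the statement being proved, and you have not actually extracted the recursion from it.

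The paper's argument is entirely different and avoids both KdV and localization. It is a direct geometric computation on $\mm_{g,1}$: one studies the evaluation map $\pi^*\mathbb E\rightarrow\omega_\pi$, whose cokernel is supported on the loci $X_j$ of rational spines carrying $j$ positive-genus tails through the marked point. Because $\psi_1$ restricted to $X_j$ pulls back from $\mm_{0,j+1}$, dimension reasons force $\psi_1^{g-1}|_{X_j}=0$, and a Porteous-type argument gives $\bigl(\pi^*c(\mathbb E)/(1+\psi_1)\bigr)_{2g-2}=\psi_1^{g-2}[X_g]_{\mathbb Q}$. Mumford's relation $c(\mathbb E)\cdot c(\mathbb E^{\vee})=1$ then converts $\int_{\mm_{g,1}}\psi_1^{3g-2}$ into $\lambda_g\psi_1^{g-2}\cdot[X_g]_{\mathbb Q}$. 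This last number is evaluated via the degree-$g!$ gluing map $(\mm_{1,1})^g\times\mm_{0,g+1}\rightarrow X_g$: the Hodge bundle splits as a box product, so $\lambda_g$ contributes $(\int_{\mm_{1,1}}\lambda_1)^g=1/24^g$, while $\int_{\mm_{0,g+1}}\psi_p^{g-2}=1$. The factors $1/g!$ and $1/24^g$ thus arise together from a single product decomposition rather than one genus at a time. If you want an inductive argument, you must actually carry out the KdV extraction or give a self-contained derivation of the recursion; as written, the crucial step is missing.
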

\begin{proof}
The cokernel of the sheaf morphism $\pi^*(\mathbb E)\rightarrow \omega_{\pi}$ on $\mm_{g, 1}$ given by multiplication of global sections, is supported on the locus $X_2\cup \ldots \cup X_g$, where $X_j\subset \mm_{g, 1}$ is the closure of the subvariety  of pointed curves $[R\cup C_1\cup\ldots \cup C_j, p]$, where $R$ is a smooth rational curve,  $p\in R$ and $C_i$  are smooth
curves with $\#\bigl(R\cap C_i\bigr)=1$, for $1\leq i\leq j$ and $\sum_{i=1}^j g(C_i)=g$. Clearly $\mbox{dim}(X_j)=3g-2-j$, and there is a natural map $$f_j: X_j\rightarrow \mm_{0, j+1}$$ forgetting the tails $C_1, \ldots, C_j$, while retaining the intersection points $R\cap C_i$ for $1\leq i\leq j$. One has that $\psi_{1 |\ X_j}=f_j^*(\psi_p)$, where $\psi_p\in A^1(\mm_{0, j+1})$ denotes the
cotangent line class on $\mm_{0, j+1}$ corresponding to the marked point labeled by $p\in R$. For dimension reasons it follows that $\psi^{g-1}_{1 | \ X_2\cup \ldots \cup X_g}=0$, whereas
$\psi^{g-2}_{1 | \ X_2\cup \ldots \cup X_g}$ must be entirely supported on the locus $X_g$. Putting these observations together, one finds that
\begin{equation}\label{inter}
\Bigl(\frac{\pi^*c(\mathbb E)}{1+\psi_1}\Bigr)_{2g-2}=\psi_1^{g-2}[X_g]_{\mathbb Q}.
\end{equation}
To estimate $\int_{\mm_{g, 1}} \psi_1^{3g-2}$, one uses  Mumford's relation $c(\mathbb E)\cdot c(\mathbb E^{\vee})=1$, cf. \cite{M6}. This comes from the exact sequence which globalizes Serre duality
 $$0\longrightarrow \mathbb E\rightarrow R^1\pi_*\Omega_{\pi}^1\rightarrow \mathbb E^{\vee}\rightarrow 0,$$
 where the rank $2g$ vector bundle in the middle possesses a Gauss-Manin connection.
Accordingly, we can write that
$$\int_{\mm_{g, 1}}\psi_1^{3g-2}=\Bigl(\frac{\pi^*(\mathbb E)}{1+\psi_1}\Bigr)_{2g-2}\cdot (\lambda_g+\lambda_{g-1}\psi_1+\cdots+\psi_1^g)
=\lambda_{g}\psi_1^{g-2}\cdot [X_{g}]_{\mathbb Q}.$$
This last intersection number can be evaluated via the map of degree $g!$,
$$\phi:(\mm_{1, 1})^g \times \mm_{0, g+1}\rightarrow X_{g},$$
which attaches $g$ elliptic tails at the first $g$ marked points of a rational $(g+1)$-pointed stable curve. Clearly $\phi^*(\mathbb E_{| X_g})=\mathbb E_1\boxtimes \cdots \boxtimes \mathbb E_1$, where $\mathbb E_1$ is the Hodge bundle on $\mm_{1, 1}$. Since $\int_{\mm_{1, 1}}\lambda_1 =1/24$,  one finds that,
$$\lambda_{g}\psi_1^{g-2}\cdot [X_g]_{\mathbb Q}=\frac{1}{g!}\Bigl(\int_{\mm_{1, 1}} \lambda_1\Bigr)^{g}\cdot \int_{\mm_{0, g+1}}\psi_1^{g-2}=\frac{1}{ 24^g\cdot g!}.$$
\end{proof}
To evaluate the integral $\int_{\mm_{g, 1}} \lambda \cdot \psi_1^{3g-3}$, first one uses the GRR calculation of $\mbox{ch}(\mathbb E)$ applied to the universal curve $\pi:\mm_{g, 1}\rightarrow \mm_g$.  One finds that
$$\int_{\mm_{g, 1}}\lambda\cdot \psi_1^{3g-3}=\frac{1}{12}\int_{\mm_{g, 2}} \psi_1^{3g-3}\psi_2^2-\frac{1}{12} \int_{\mm_{g, 1}}\psi_1^{3g-2}+\frac{1}{24} \int_{\mm_{g-1, 3}} \psi_{1}^{3g-3}.$$ The last integral is evaluated using again the string equation, for the middle one we use Lemma
\ref{psiint}. The first integral is evaluated using \cite{W} and one finally proves:

\begin{theorem}\label{pand}
$$s(\mm_g)\geq \frac{\int_{\mm_{g, 1}}\delta_0\cdot \psi_1^{3g-3}}{\int_{\mm_{g, 1}} \lambda\cdot \psi_1^{3g-3}}=\frac{60}{g+4}.$$
\end{theorem}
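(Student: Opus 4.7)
The plan is to construct a suitable ``covering'' curve class $C \in A_1(\mm_g)$ with the property that $C \cdot D \geq 0$ for every effective divisor $D$ on $\mm_g$ not containing any boundary component, so that for every such $D = a\lambda - \sum_i b_i \delta_i$ one has $s(D) \geq a/b_0 \geq (C \cdot \delta_0)/(C \cdot \lambda)$. Following Pandharipande, one takes $C = \pi_*(\psi_1^{3g-3})$ where $\pi: \mm_{g, 1} \to \mm_g$ is the universal curve; since $\psi_1$ is nef on $\mm_{g, 1}$ this class is represented by a cycle moving through the interior $\cM_g$. The bound then reduces to evaluating the two descendent integrals $\int_{\mm_{g, 1}} \delta_0 \cdot \psi_1^{3g-3}$ and $\int_{\mm_{g, 1}} \lambda \cdot \psi_1^{3g-3}$.

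For the numerator, the degree-$2$ gluing map $\epsilon: \mm_{g-1, 3} \to \mm_{g, 1}$ given by $[C, p, x, y] \mapsto [C/(x \sim y), p]$ has image equal to $\delta_0$ and satisfies $\epsilon^*\psi_1 = \psi_1$, so the push-pull formula yields $\int_{\mm_{g, 1}}\delta_0 \cdot \psi_1^{3g-3} = \tfrac{1}{2} \int_{\mm_{g-1, 3}} \psi_1^{3g-3}$. Two successive applications of the string equation (forgetting the marked points $3$ and then $2$) reduce the right-hand side to $\tfrac{1}{2} \int_{\mm_{g-1, 1}} \psi_1^{3g-5}$, which Lemma \ref{psiint} evaluates to $\tfrac{1}{2 \cdot 24^{g-1}(g-1)!}$.

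For the denominator, the starting point is the identity already stated in the excerpt expressing $\int_{\mm_{g, 1}}\lambda \cdot \psi_1^{3g-3}$ as a combination of three descendent integrals arising from the GRR calculation of $\mathrm{ch}(\mathbb E)$ on the universal curve. The middle term is directly Lemma \ref{psiint}, and the third term $\int_{\mm_{g-1, 3}}\psi_1^{3g-3}$ is handled by the same double string-equation reduction used for the numerator. For the first term $\int_{\mm_{g, 2}}\psi_1^{3g-3}\psi_2^2$, I would pass through the forgetful map $\pi_2: \mm_{g, 2} \to \mm_{g, 1}$: the comparison $\psi_1 = \pi_2^*\psi_1 + D_{1, 2}$ together with $D_{1, 2} \cdot \psi_2 = 0$ and the pushforward formula $\pi_{2, *}(\psi_2^2) = \kappa_1$ give $\int_{\mm_{g, 2}}\psi_1^{3g-3}\psi_2^2 = \int_{\mm_{g, 1}}\psi_1^{3g-3}\cdot \kappa_1$; a further application of $\kappa_1^{\mm_{g, 1}} = \pi^*\kappa_1^{\mm_g} + \psi_1$ and the projection formula then reduce this integral to the sum $\int_{\mm_g}\kappa_{3g-4}\cdot \kappa_1 + \int_{\mm_{g, 1}}\psi_1^{3g-2}$.

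The main obstacle is the residual Hodge integral $\int_{\mm_g}\kappa_{3g-4}\cdot \kappa_1$, which is not directly of the form covered by Lemma \ref{psiint}; I expect to extract its value from the mixed $\kappa$-$\psi$ intersection formulas of \cite{FaP} combined with Witten's conjecture \cite{W}. Once all three descendent integrals appearing in the denominator are expressed as explicit rational multiples of $1/(24^g g!)$, assembling the numerator and the denominator and carrying out the algebraic simplification yields the sharp ratio $(C \cdot \delta_0)/(C \cdot \lambda) = 60/(g+4)$.
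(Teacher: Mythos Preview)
Your proposal is correct and follows essentially the same strategy as the paper: both use the covering curve class $\pi_*(\psi_1^{3g-3})$ coming from the nefness of $\psi_1$, both compute the numerator via the gluing map $\epsilon$ and a double application of the string equation together with Lemma \ref{psiint}, and both reduce the denominator via the GRR identity for $\lambda$ to the three descendent integrals you list.

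The only place you diverge slightly is in the treatment of $\int_{\mm_{g,2}}\psi_1^{3g-3}\psi_2^2$. The paper simply observes that this is a pure $\psi$-integral $\langle \tau_{3g-3}\tau_2\rangle_g$ and invokes \cite{W} (Witten--Kontsevich) directly. Your reduction through $\kappa_1$ on $\mm_{g,1}$ and then $\kappa_{3g-4}\cdot\kappa_1$ on $\mm_g$ is valid, but it is a detour: to evaluate $\int_{\mm_g}\kappa_{3g-4}\kappa_1$ you would in practice translate back into $\psi$-integrals anyway. So you can shorten your argument by omitting the $\kappa$-reduction and appealing to Witten--Kontsevich at the point where $\int_{\mm_{g,2}}\psi_1^{3g-3}\psi_2^2$ appears.
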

Note that the bound $O(1/g)$ obtained in this theorem is quite similar to the experimental bound $\frac{576}{5g}$ obtained in \cite{HMo} using Hurwitz covers.

\begin{remark} Another very natural covering curve for $\mm_g$, which potentially could produce a much better lower bound for $s(\mm_g)$ than the one in \cite{P}, has been recently proposed by Coskun, Harris and Starr \cite{CHS}: If $\textbf{Hilb}_{g, 1}$ denotes the Hilbert scheme of canonically embedded curves $C\subset \PP^{g-1}$, then $\mbox{dim } \textbf{Hilb}_{g, 1}=g^2+3g-4$. We denote by $r(g)$ the largest number $r$, such that through $r$ general points in $\PP^{g-1}$ there passes a canonical curve $[C\hookrightarrow \PP^{g-1}]\in \textbf{Hilb}_{g, 1}$. It has been determined in \cite{St} that $$r(g)=g+5+\bigl[\frac{6}{g-2}\bigr]$$ (this is, the smallest solution of the necessary inequality $\mbox{dim } \textbf{Hilb}_{g, 1}+r\geq (g-1)r$). In particular $r(3)=14$ (as it should be!) and $r(5)=12$. For $g\geq 9$, one fixes general points
$p_1, \ldots, p_{g+5} \in \PP^{g-1}$ as well as a general linear space $\PP^{g-7}\subset \PP^{g-1}$. The family $X_g\subset \mm_g$ consisting of canonical curves
$[C\hookrightarrow \PP^{g-1}]\in \textbf{Hilb}_{g, 1}$ passing through $p_1, \ldots, p_{g+5}$, and such that $C\cap \PP^{g-7}\neq \emptyset$ is a covering curve for $\mm_g$. It is an interesting problem to determine the slope $X_g\cdot\delta/X_g\cdot \lambda$.
\end{remark}

\section{The Harris-Mumford theorem revisited: An alternative proof via syzygies}

In this section we present a different proof of the main result from \cite{HM} by replacing the calculation of the
class of the Hurwitz divisor $\mm_{g, k}^1$ of $k$-gonal curves of genus $g=2k-1$ by the calculation of the class of a certain  \emph{Koszul divisor}
$\overline{\cZ}_{g, k-2}$, consisting of canonical curves $[C]\in \cM_g$ with extra syzygies at the $(k-2)$-nd step in its minimal
 graded resolution. The advantage of this approach is that the proof that
$\mm_g$ is of general type becomes shorter since one can completely avoid having to develop the theory of admissible covers and do without the enumerative calculations that occupy
a large part of \cite{HM}, precisely pg. 53-86, or alternatively, develop the theory of limit linear series \cite{EH1}. The proof becomes also more direct and logical, since it uses solely the geometry of canonical curves of genus $g$ and that of the corresponding Hodge bundles on $\mm_g$ , rather than
the geometry of an auxiliary Hurwitz stack. The disadvantage of this approach, is that the statement that the locus $\overline{\cZ}_{g, k-2}$ is a divisor on $\mm_g$ is highly non-trivial and it is equivalent to Green's Conjecture for generic curves of odd genus (Voisin's theorem \cite{V1}, \cite{V2}). This situation is somewhat similar to that encountered in the streamlined proof of Theorem \ref{hme} presented by Eisenbud and Harris in \cite{EH3} (and which is comparable in length to our proof): Showing that the a priori virtual Brill-Noether locus is an actual divisor in $\mm_g$, requires the full force of the Brill-Noether theory and is arguably more difficult than computing the class of the Brill-Noether divisor on $\mm_g$.

We start by recalling a few basic facts on syzygies. For a smooth curve $C$ and a globally generated line bundle $L\in \mbox{Pic}^d(C)$, we denote
by $K_{i, j}(C, L)$ the Koszul cohomology group obtained from the
complex
$$\wedge^{i+1} H^0(L)\otimes H^0(L^{\otimes
(j-1)})\stackrel{d_{i+1, j-1}}\longrightarrow \wedge^i H^0(L)\otimes H^0(L^{\otimes
j})\stackrel{d_{i, j}}\longrightarrow \wedge^{i-1} H^0(L)\otimes H^0(L^{\otimes
(j+1)}),$$ where the maps $d_{i, j}$ are the Koszul differentials defined by
(cf.
\cite{La2}, \cite{PR})
$$d_{i, j}\bigl(f_1\wedge \ldots \wedge f_i\ \otimes u\bigr):=\sum_{l=0}^i (-1)^l \bigl(f_1\wedge \ldots \wedge \check{f_l}\ldots \wedge f_i\bigr)\otimes (uf_l),$$
with $f_l\in H^0(C, L)$ and $u\in H^0(C, L^{\otimes j})$. If $R(C, L):=\oplus_{n\geq 0} H^0(C, L^{\otimes n})$ denotes the
graded module over the polynomial ring $S:=\mathrm{Sym }\ H^0(C, L)$, then
$$K_{i, j}(C, L)=\mathrm{Tor}_i^S\bigl(\mathbb C, R(C, L)\bigr)_{i+j}.$$ There is a standard geometric way of computing Koszul cohomology groups using \emph{Lazarsfeld bundles}. Since $L$ is assumed to be globally generated, we can define the vector bundle $M_L$ on $C$ through the following exact sequence on $C$:
$$ 0\rightarrow M_L\rightarrow H^0(L)\otimes \OO_C \rightarrow L\rightarrow
0.$$  A diagram chasing argument using the exact sequences
$$ 0\longrightarrow \wedge^a M_L\otimes L^{\otimes b} \rightarrow
\wedge^a H^0(L)\otimes L^{\otimes b}\longrightarrow \wedge^{a-1}
M_L\otimes L^{\otimes (b+1)}\longrightarrow 0$$ for various $a$ and
$b$, shows that there is an identification cf. \cite{La2}
\begin{equation}\label{koszul}
 K_{a, b}(C, L)=\frac{H^0(C, \wedge^a
M_L\otimes L^{\otimes b})}{\mbox{Image} \{\wedge^{a+1} H^0(C, L)\otimes
H^0(C, L^{\otimes (b-1)})\}}\ .
\end{equation}
\begin{example}
From (\ref{koszul}) we find that $K_{0, 2}(C, L)=0$ if and only if the multiplication map $\mbox{Sym}^2 H^0(C, L)\rightarrow H^0(C, L^{\otimes 2})$ is surjective. Assuming $L$ is normally generated, we have that $K_{1, 2}(C, L)=0$ if and only if $C\stackrel{|L|}\rightarrow \PP\bigl(H^0(C, L)^{\vee}\bigr)$ is cut out by quadrics. More generally, one says that $L$ satisfies the \emph{Green-Lazarsfeld property $(N_p)$} for $p\geq 0$, if the vanishing $K_{i, 2}(C, L)=0$ holds for all $0\leq i\leq p$. This corresponds intuitively to the situation that the first $p$ syzygies of the image curve $C\stackrel{|L|}\rightarrow \PP\bigl(H^0(C, L)^{\vee}\bigr)$ are as simple as possible, that is, linear.
\end{example}

From now on we specialize to the case $L=K_C\in W_{2g-2}^{g-1}(C)$ and we consider the canonical map
$C\stackrel{|K_C|}\longrightarrow \PP^{g-1}$. If $C$ is non-hyperreliptic, we set $\I_{C/\PP^{g-1}}$ to be the ideal of the canonically embedded curve.

\begin{proposition}\label{determ}
For any non-hyperrelliptic curve $[C]\in \cM_g$ and any integer $0\leq i\leq (g-1)/2$ we have the following equivalence: $$K_{i, 2}(C, K_C)\neq 0\Longleftrightarrow
h^0\bigl(\PP^{g-1}, \Omega^i_{\PP^{g-1}} (i+2) \otimes \I_{C/\PP^{g-1}}\bigr)\geq
{g-1\choose i+2}\frac{(g-2i-3)(i+1)}{g-i-1}+1.$$
\end{proposition}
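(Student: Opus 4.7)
The strategy is to translate the computation of $K_{i,2}(C,K_C)$, which by (\ref{koszul}) lives on the curve, into the computation of a twisted ideal-sheaf cohomology on $\PP^{g-1}$. The crucial bridge is the Euler sequence on $\PP^{g-1}=\PP(H^0(K_C)^{\vee})$ restricted to $C$: since $\OO_C(1)=K_C$ and $V:=H^0(K_C)$, the sequence
$$0\longrightarrow \Omega^1_{\PP^{g-1}}(1)_{|C}\longrightarrow V\otimes \OO_C\longrightarrow K_C\longrightarrow 0$$
is canonically the defining sequence of the Lazarsfeld bundle $M_{K_C}$. Hence $\wedge^i M_{K_C}\otimes K_C^{\otimes 2}\cong \Omega^i_{\PP^{g-1}}(i+2)_{|C}$, and (\ref{koszul}) becomes
$$K_{i,2}(C,K_C)=\frac{H^0\bigl(C,\Omega^i_{\PP^{g-1}}(i+2)_{|C}\bigr)}{\mathrm{Image}\,\bigl\{\wedge^{i+1}V\otimes V\to H^0\bigl(C,\Omega^i_{\PP^{g-1}}(i+2)_{|C}\bigr)\bigr\}}.$$

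Next, I would prove via a diagram chase that the image of this Koszul differential equals the image of the restriction map $\rho\colon H^0(\PP^{g-1},\Omega^i(i+2))\to H^0(C,\Omega^i(i+2)_{|C})$. The point is that the twisted Euler sequence $0\to \Omega^{i+1}(i+2)\to \wedge^{i+1}V\otimes \OO(1)\to \Omega^i(i+2)\to 0$ together with Bott's vanishing $H^1(\PP^{g-1},\Omega^{i+1}(i+2))=0$ shows that the global Koszul map $\wedge^{i+1}V\otimes V\twoheadrightarrow H^0(\PP^{g-1},\Omega^i(i+2))$ is surjective; comparing with its restriction to $C$ gives the desired identification. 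Consequently, taking cohomology in
$$0\longrightarrow \I_{C/\PP^{g-1}}\otimes \Omega^i_{\PP^{g-1}}(i+2)\longrightarrow \Omega^i_{\PP^{g-1}}(i+2)\longrightarrow \Omega^i_{\PP^{g-1}}(i+2)_{|C}\longrightarrow 0$$
and using once more $H^1(\PP^{g-1},\Omega^i(i+2))=0$ (Bott), one obtains the clean identification
$$K_{i,2}(C,K_C)\;\cong\;H^1\bigl(\PP^{g-1},\I_{C/\PP^{g-1}}\otimes \Omega^i_{\PP^{g-1}}(i+2)\bigr),$$
so that $\dim K_{i,2}(C,K_C)=h^0(\I_{C/\PP^{g-1}}\otimes \Omega^i(i+2))-\bigl[h^0(\Omega^i(i+2))-h^0(C,\Omega^i(i+2)_{|C})\bigr]$.

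It remains to evaluate the correction term in brackets. On $\PP^{g-1}$, iterating the exact sequences $0\to \Omega^j(i+2)\to \wedge^{j}V\otimes \OO(i+2-j)\to \Omega^{j-1}(i+2)\to 0$ (all higher cohomologies vanish by Bott) yields
$$h^0\bigl(\PP^{g-1},\Omega^i_{\PP^{g-1}}(i+2)\bigr)=\binom{g-1}{i}\cdot \frac{g(g+1)}{i+2}.$$
On the curve, Riemann-Roch for the rank $\binom{g-1}{i}$, degree $-(2g-2)\binom{g-2}{i-1}+(4g-4)\binom{g-1}{i}$ bundle $\wedge^i M_{K_C}\otimes K_C^{\otimes 2}$ gives $\chi=\binom{g-1}{i}(3g-3-2i)$. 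The vanishing of $h^1$ follows by Serre duality, which identifies $h^1(\wedge^i M_{K_C}\otimes K_C^{\otimes 2})$ with $h^0(\wedge^{g-1-i}M_{K_C})$; for non-hyperelliptic $C$ the Lazarsfeld bundle $M_{K_C}$ is semistable (Paranjape-Ramanan/Butler), hence $\wedge^{g-1-i}M_{K_C}$ is semistable of negative slope $-2(g-1-i)<0$ (this is where the hypothesis $i\leq (g-1)/2$ enters comfortably) and has no sections. Thus $h^0(C,\Omega^i(i+2)_{|C})=\binom{g-1}{i}(3g-3-2i)$.

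Finally, bringing the two numerical values over a common denominator $i+2$, the difference simplifies to
$$h^0(\Omega^i(i+2))-h^0(C,\Omega^i(i+2)_{|C})=\binom{g-1}{i}\cdot \frac{(g-2-i)(g-2i-3)}{i+2}=\binom{g-1}{i+2}\cdot \frac{(g-2i-3)(i+1)}{g-i-1},$$
using the identity $\binom{g-1}{i+2}=\binom{g-1}{i}(g-1-i)(g-2-i)/[(i+1)(i+2)]$. The asserted equivalence then follows immediately from $\dim K_{i,2}(C,K_C)=h^0(\I_{C/\PP^{g-1}}\otimes \Omega^i(i+2))-\binom{g-1}{i+2}\frac{(g-2i-3)(i+1)}{g-i-1}$. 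The main technical input, which I view as the principal obstacle to streamline further, is the semistability of $M_{K_C}$ guaranteeing the Serre-dual vanishing; every other ingredient is a Bott computation or diagram chase in the Koszul complex.
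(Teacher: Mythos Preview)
Your proof is correct and follows essentially the same route as the paper's: the identification $\wedge^i M_{K_C}\otimes K_C^{\otimes 2}\cong \Omega^i_{\PP^{g-1}}(i+2)_{|C}$, Bott vanishing on $\PP^{g-1}$, the ideal-sheaf sequence, and (semi)stability of $M_{K_C}$ to kill $H^1(C,\wedge^i M_{K_C}\otimes K_C^{\otimes 2})$. The only organisational difference is that you extract the clean isomorphism $K_{i,2}(C,K_C)\cong H^1\bigl(\PP^{g-1},\I_{C/\PP^{g-1}}\otimes \Omega^i_{\PP^{g-1}}(i+2)\bigr)$ directly from the restriction sequence, whereas the paper packages the same information into a $3\times 3$ diagram and the Snake Lemma, identifying $H^0(\I\otimes\Omega^i(i+2))$ with $\mathrm{Coker}(\alpha)$ for $\alpha\colon H^0(\Omega^{i+1}(i+2))\to H^0(\wedge^{i+1}M_{K_C}\otimes K_C)$; the numerics then match up line by line.
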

\begin{proof}
We start with a canonically embedded curve $C\stackrel{|K_C|}\hookrightarrow \PP^{g-1}$. Throughout the proof we use the identification $M_{\PP^{g-1}}=\Omega_{\PP^{g-1}}(1)$ coming from the Euler sequence on $\PP^{g-1}$. Since the vector bundle $M_{K_C}$ is stable (cf. \cite{PR} Corollary 3.5), we have the vanishing $$H^1(C, \wedge^{i}\otimes K_C^{\otimes 2})=0$$ because $\mu\bigl(\wedge^i M_{K_C}\otimes K_C^{\otimes 2})>2g-1$. It follows from (\ref{koszul}) that  $K_{i, 2}(C, K_C)\neq 0$ if and only if the map
$$H^1(C, \wedge^{i+1}M_{K_C}\otimes K_C)\rightarrow \wedge^{i+1}H^0(C, K_C)\otimes
H^1(C, K_C)$$ is an isomorphism, or equivalently $h^1(C, \wedge^{i+1}
M_{K_C}\otimes K_C)={g\choose
i+1}$. We write down the following commutative diagram, where by abusing notation, we shall denote by the same letter a sheaf morphism and the group morphism it induces at the level of global sections:
$$\begin{array}{ccccc}
0 & \; & 0 & \;&  0\\
\rmapdown{} & \; & \rmapdown{} &\; & \rmapdown{}\\
 \wedge^{i+1}M_{\PP^{g-1}}\otimes\I_{C/\PP^{g-1}}(1) & \longrightarrow & \wedge^{i+1}H^0(\OO_{\PP^{g-1}}(1))\otimes \I_{C/\PP^{g-1}}(1) & \longrightarrow & \wedge^i M_{\PP^{g-1}}\otimes \I_{C/\PP^{g-1}}(2)  \\
 \rmapdown{} & \; & \rmapdown{} & \; & \rmapdown{}  \\
  \wedge^{i+1} M_{\PP^{g-1}}(1) & \longrightarrow  & \wedge^{i+1} H^0(\OO_{\PP^{g-1}}(1))\otimes \OO_{\PP^{g-1}}(1) & \longrightarrow & \wedge^i M_{\PP^{g-1}}(2) \\
 \rmapdown{\alpha} & \; &\rmapdown{\beta} & \; &\rmapdown{\gamma} \\
 \wedge^{i+1}M_{K_C}\otimes K_C &  \longrightarrow & \wedge^{i+1} H^0(K_C)\otimes K_C & \longrightarrow &
\wedge^i M_{K_C}\otimes K_C^{\otimes 2}  \\
\rmapdown{} &\; & \rmapdown{} & \; & \rmapdown{}\\
0 & \;&  0 & \;&  0\\
\end{array}$$
Applying the Snake Lemma, we find that $H^0(\PP^{g-1}, \wedge^i M_{\PP^{g-1}}\otimes \I_{C/\PP^{g-1}}(2))=\mathrm{Coker}(\alpha)$. We also note that $h^0\bigl(\PP^{g-1}, \wedge^{i+1} M_{\PP^{g-1}}(1)\bigr)={g\choose i+2}$ (use for instance Bott's vanishing theorem). Thus the condition $K_{i, 2}(C, K_C)=0$ is satisfied if and only if
$$\mathrm{dim }\ \mathrm{ Coker}(\alpha)=h^0(C, \wedge^{i+1} M_{K_C}\otimes K_C)-h^0\bigl(\PP^{g-1}, \wedge^{i+1} M_{\PP^{g-1}}(1)\bigr)=$$
$$={g-1\choose i+1}(g-2i-3)+h^1(C, \wedge^{i+1} M_{K_C}\otimes K_C)-{g\choose i+2}\leq $$
$$\leq {g-1\choose i+2}\frac{(g-2i-3)(i+1)}{g-i-1}.$$
\end{proof}

For $g=2i+3$, we find that $K_{i, 2}(C, K_C)\neq 0$ if and only if the map
\begin{equation}\label{koszet}
H^0(\PP^{g-1}, \wedge^i M_{\PP^{g-1}} (2))\stackrel{\gamma}\longrightarrow H^0(C, \wedge^i M_{K_C}\otimes K_C^{\otimes 2})
\end{equation}
is not an isomorphism. We note that $\gamma$ is a map between vector spaces of the same dimensions:
$$h^0\bigl(\PP^{g-2}, \wedge^i M_{\PP^{g-1}}(2)\bigr)=(i+1){g+1\choose i+2}=\chi(C, \wedge^i M_{K_C}\otimes K_C^{\otimes 2})=h^0(C, \wedge^i M_{K_C}\otimes K_C^{\otimes 2})$$ (for the left hand side use  Bott vanishing, for the right hand-side the Riemann-Roch theorem.) This shows that the locus
$$\cZ_{g, i}:=\{[C]\in \cM_g: K_{i, 2}(C, K_C)\neq 0\},$$
being the degeneracy locus of a morphism between two vector bundles of the same rank over $\cM_g$, is a virtual divisor on the moduli space of curves.

\begin{example} By specializing to the case  $g=3$, we find the following interpretation
 $$\cZ_{3, 0}:=\{[C]\in \cM_3: \mbox{Sym}^2 H^0(C, K_C)\rightarrow H^0(C, K_C^{\otimes 2}) \mbox{ is not an isomrphism}\}.$$
Using M. Noether's theorem \cite{ACGH}, it follows that $\cZ_{3, 0}$ consists precisely of hyperelliptic curves, that is, $\mbox{supp}(\cZ_{3, 0})=\mbox{supp}(\cM_{3, 2}^1)$. In the next case $g=5$, we use Petri's theorem stating
that a non-hyperelliptic canonical curve $[C]\in \cM_5$ is cut out by quadrics unless it has a $\mathfrak g^1_3$.
We obtain that $\mbox{supp}(\cZ_{5, 1})=\mbox{supp}(\cM_{5, 3}^1)$.
\end{example}

In order to describe the closure $\overline{\cZ}_{g, i}$ of $\cZ_{g, i}$ inside $\mm_g$, we shall extend the determinantal description of $\cZ_{g, i}$ over a partial compactification
of $\textbf{M}_g$. Our methods seem well-suited for divisor class calculations but harder to implement in the case
of Koszul cycles on $\mm_g$ of higher codimension.

We denote by
$\textbf{M}_g^*:=\textbf{M}_g\cup \bigl(\cup_{j=0}^{i+1}
\Delta_j^0\bigr)$ the locally closed substack of $\rem_g$ defined as
the union $\textbf{M}_g$ and the open
substacks $\Delta_j^0\subset \Delta_j$ for $1\leq j\leq i+1$
consisting of $1$-nodal genus $g$ curves $[C\cup_y D]$, with $[C, y]\in
\cM_{g-j, 1}$ and $[D, y]\in \cM_{j, 1}$, that is, $\Delta_j^0$ is the intersection of $\Delta_j$ with the codimension $1$ stratum in the topological stratification of $\overline{\textbf{M}}_g$.  The substack $\Delta_0^0\subset \Delta_0$ classifies
$1$-nodal irreducible genus $g$ curves $$\bigl[C_{yq}:=\frac{C}{q\sim y}\bigr]\in \mm_g,$$ where $[C,
q, y]\in \cM_{g-1, 2}$ together with their degenerations consisting of unions
of a smooth genus $g-1$ curve and a nodal rational curve. We set $\pem_g:=\textbf{M}_g\cup \Delta_0^0\cup
\Delta_1^0\subset \textbf{M}_g^*$.

For integers $0\leq a\leq i$ and $ b\geq 2$ we define
vector bundles $\G_{a,b}$ over $\pem_g$ with fibre
$$\G_{a,b}[C]=H^0(C, \wedge^a M_{K_C}\otimes K_C^{\otimes b})$$ over every point $[C]\in \cM_g$. The question is of course how the extend this description of $\G_{a, b}$ over the locus of stable curves. In this paper
we shall only describe how to construct the bundles $\G_{a, b}$ over $\pem_g$, which will suffice in order to compute the slope of $\overline{\cZ}_{g, i}$ and prove Theorem \ref{hme} for $g=2i+3$. For full details on how to extend the vector bundles $\G_{a, b}$ over $\pem_g$ (that is, outside codimension $2$ over all the boundary divisors on $\mm_g$), we refer to \cite{F2} p. 75-86. We start by constructing the bundles $\G_{0, b}$:
\begin{proposition}\label{fibre2} For each $b\geq 2$ there
exists a vector bundle $\G_{0,b}$ over $\pem_g$ of rank $(2b-1)(g-1)$ with fibres admitting the
following description:
\begin{itemize}
\item For $[C]\in \cM_g$ we have that
$\G_{0,b}[C]=H^0(C, K_C^{\otimes b})$.
\item For $[C\cup_y
E])\in \Delta_1^0$, where $E$ is an elliptic curve, if $u\in H^0(C, K_C\otimes \OO_C(2y))-H^0(C, K_C)$ denotes any meromorphic $1$-form with non-zero residue at $y$, then
$$\G_{0,b}[C\cup_y E]=H^0(C, K_C^{\otimes b}\otimes \OO_C((2b-2)\cdot y))+\mathbb C\cdot
u^b\subset H^0\bigr(C, K_C^{\otimes b}\otimes \OO_C(2b\cdot y)\bigl).$$
\item For $[C_{yq}=C/y\sim
q]\in \Delta_0^0$, where $q,y\in C$ and $u\in H^0(C, K_C\otimes \OO_C(y+q))-H^0(C, K_C)$ is a
meromorphic $1$-form with non-zero residues at $y$ and $q$, we have that
$$\G_{0,b}[C_{yq}]=H^0\bigl(C, K_C^{\otimes b}\otimes \OO_C((b-1)y+(b-1)q)\bigr)\oplus \mathbb C\cdot u^b\subset H^0\bigl(C, K_C^{\otimes b}\otimes \OO_C(by+bq)\bigr).$$
\end{itemize}
\end{proposition}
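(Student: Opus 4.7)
My plan is to realize $\G_{0,b}$ as a pushforward from the universal curve. Let $\pi:\pem_{g,1}\to \pem_g$ denote the (restriction of the) universal family, and set $\G_{0,b}:=\pi_*(\omega_\pi^{\otimes b})$. For every stable curve $X$ and every $b\geq 2$, Serre duality yields
$$h^1(X,\omega_X^{\otimes b})=h^0(X,\omega_X^{\otimes(1-b)})=0,$$
since $\omega_X^{\otimes(1-b)}$ has strictly negative degree on each irreducible component. Cohomology and base change then shows that $\G_{0,b}$ is a vector bundle whose rank equals $\chi(\omega_X^{\otimes b})=(2b-1)(g-1)$ by Riemann-Roch, and whose fibre at $[X]$ is canonically $H^0(X,\omega_X^{\otimes b})$.

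At smooth points the identification is tautological. For a boundary point I would let $\nu:X^\nu\to X$ be the partial normalization at the unique node $p$ and use the standard sequence
$$0\longrightarrow \omega_X^{\otimes b}\longrightarrow \nu_*\nu^*\omega_X^{\otimes b}\longrightarrow \mathbb C_p\longrightarrow 0$$
to realize $H^0(X,\omega_X^{\otimes b})$ as the codimension one subspace of $H^0(X^\nu,\nu^*\omega_X^{\otimes b})$ cut out by the node compatibility (for $b=1$ this is the classical residue balance; for general $b$ it is the matching of the leading Laurent coefficient of the local $b$-th power expansion on the two branches at $p$).

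For $[X=C\cup_y E]\in \Delta_1^0$ one has $\nu^*\omega_X^{\otimes b}=K_C^{\otimes b}(by_C)\oplus \OO_E(by_E)$. Since $E$ is elliptic, $h^0(\OO_E(by_E))=b$, and the residue theorem on $E$ forces the principal part at $y_E$ of every such section to lie in an explicit $b$-dimensional subspace of $\mathbb C[z^{-1}]/\mathbb C$. Using that the one-form $u\in H^0(K_C(2y))\setminus H^0(K_C)$ provides a local model on the $C$-side for the dualizing generator $\eta=dx/x$ across the node, I would transfer these $E$-side principal parts to $C$, identifying $H^0(X,\omega_X^{\otimes b})$ with the subspace $H^0(C,K_C^{\otimes b}((2b-2)y))+\mathbb C\cdot u^b$ of $H^0(C,K_C^{\otimes b}(2by))$ of the correct dimension $(2b-1)(g-1)$. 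The case of $\Delta_0^0$ is identical in spirit: normalize at the self-node to recover $C$ with two marked points $y,q$; impose the single residue-type compatibility on the principal parts at $y$ and $q$; and absorb the remaining direction into $u^b$ for $u\in H^0(K_C(y+q))\setminus H^0(K_C)$ having opposite residues at $y$ and $q$.

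The main obstacle I anticipate is the careful local analysis that identifies the extra summand $\mathbb C\cdot u^b$ with the correct pole orders: one must verify that, after passing through the normalization, the naive bound $by$ on $C$ is effectively enlarged to $(2b-2)y$ by absorbing the elliptic (respectively self-node) degrees of freedom, and that exactly one further direction — spanned by $u^b$ inside $H^0(C,K_C^{\otimes b}(2by))$ — is needed to account for the codimension one node-compatibility. This is a local calculation with the generator $\eta=dx/x$ of $\omega_X$, but it needs to be set up consistently in families so that the resulting decomposition patches across $\pem_g$ and furnishes the claimed trivializations of $\G_{0,b}$.
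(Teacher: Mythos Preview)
Your construction $\G_{0,b}=\pi_*(\omega_\pi^{\otimes b})$ is precisely the Hodge bundle $\mathbb E_b$, and this is \emph{not} the bundle the proposition is describing. The discrepancy shows up already in the Chern class computations that follow: the paper later proves $C^1\cdot c_1(\G_{0,b})=-(2g-4)$ for every $b\geq 1$, whereas for $\mathbb E_b$ one has $c_1(\mathbb E_b)=\lambda+\binom{b}{2}\kappa_1$ and hence $C^1\cdot c_1(\mathbb E_b)=\binom{b}{2}(2g-4)$. Since these disagree for every $b\geq 1$, your bundle cannot be $\G_{0,b}$. Over $\Delta_0^0$ your description is fine: the fibre of $\mathbb E_b$ at $[C_{yq}]$ really is the hyperplane in $H^0(C,K_C^{\otimes b}(by+bq))$ cut out by the node matching, and this coincides with $H^0(K_C^{\otimes b}((b-1)(y+q)))\oplus\mathbb C\cdot u^b$. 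The problem is entirely at $\Delta_1^0$.

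At a point $[C\cup_y E]$ the fibre of $\mathbb E_b$ is $H^0(C\cup_y E,\omega^{\otimes b})$, which sits naturally inside $H^0(C,K_C^{\otimes b}(by))\oplus H^0(E,\OO_E(by))$, \emph{not} inside $H^0(C,K_C^{\otimes b}(2by))$. Your proposed ``transfer of $E$-side principal parts to $C$'' is not a canonical map: the restriction $(s_C,s_E)\mapsto s_C$ has a $(b-1)$-dimensional kernel coming from sections supported on the tail, and there is no natural way to trade sections of $\OO_E(by)$ for higher-order poles on $C$ without choosing extra data on $E$. (Note also that your $u\in H^0(K_C(2y))\setminus H^0(K_C)$ has a genuine double pole with \emph{zero} residue by the residue theorem, so it does not model the simple-pole generator $\eta=dx/x$ of the dualizing sheaf as you suggest.) The paper's construction is genuinely different: one twists $\omega_\pi^{\otimes b}$ by suitable multiples of the elliptic-tail divisor in the universal curve, so that on the central fibre the line bundle becomes $K_C^{\otimes b}(2by)$ on $C$ and trivial on $E$. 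This is exactly the $C$-aspect of the limit $\mathfrak g^{(2b-1)(g-1)-1}_{2b(g-1)}$ induced from $\omega^{\otimes b}$, and it is what produces the stated fibre description and the Chern numbers used afterwards. Without this twist you obtain the right rank but the wrong bundle.
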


The idea to define the vector bundles $\G_{0, b}$ as suitable twists by boundary divisors of powers of the relative dualizing sheaf over the universal curve, that is, $$\G_{0, b}=\pi_*\bigl(\omega_{\pi}^{\otimes b}\otimes \sum_{j=1}^{[g/2]}  \pi^*(\OO_{\rem_g}(c_j^b \ \delta_j))\bigr)$$ for precisely determined constants  $c_j^b\geq 0$, comes of course from the theory of limit linear series. Recalling that $\sigma:
\mathfrak G^{g-1}_{2g-2}\rightarrow \pem_g$ denotes the stack of limit $\mathfrak g^{g-1}_{2g-2}$'s,  then for a
curve $[C\cup_y E]\in \Delta_1^0$, the fibre $\sigma^{-1}[C\cup_y E]$ consists of a single limit linear series
$$\Bigl(l_C=\bigl(\omega_C(2y), H^0(\omega_C(2y)\bigr)\in G^{g-1}_{2g-2}(C), \ l_E=\bigl(\OO_E(2g y), (g-1) y+|(g-1)y|\bigr)\in G^{g-1}_{2g-2}(E)\bigr)\Bigr).$$ The bundle $\G_{0, 1}$ retains the aspect of this limit $\mathfrak g^{g-1}_{2g-2}$ corresponding to the component of genus $g-1$, while dropping the information coming from the elliptic tail. Similarly, for $b\geq 2$, it is an easy exercise in limit linear series to show the fibre $\G_{0, b}[C\cup_y E]$ is precisely the $C$-aspect of the limit $\mathfrak g_{(2b-1)(g-1)-1}^{2b(g-1)}$ induced from $\omega_{C\cup E}^{\otimes b}$. The situation becomes more complicated when extending $\G_{0, b}$ over the entire
stack $\textbf{M}_g^*$. As explained in \cite{F2} Theorem 3.13 in the case of the Hurwitz stack (and the same holds true for $\textbf{M}_g^*$ itself), the twisting coefficients $c_j^b$ are chosen in a unique way such that the resulting bundles $\G_{a, b}$ fit in exact sequences of type (\ref{gi}).

Having defined  $\G_{0, b}$ we now define
inductively all vector bundles $\G_{a, b}$. First we define
$\G_{1, b}$ as the kernel of the multiplication map $\G_{0,
1}\otimes \G_{0, b}\rightarrow \G_{0, b+1}$, that is, by the exact
sequence
$$0\longrightarrow \G_{1, b}\longrightarrow \G_{0,1}\otimes \G_{0, b}\longrightarrow \G_{0, b+1}\longrightarrow 0.$$
Having defined $\G_{l, b}$ for all $l\leq a-1$, the
vector bundle $\G_{a, b}$ is defined through the following exact sequence over $\pem_g$:
\begin{equation}\label{gi}
0\longrightarrow \G_{a, b}\longrightarrow \wedge^a \G_{0,
1}\otimes \G_{0, b}\stackrel{\phi_{a, b}}\longrightarrow \G_{a-1,
b+1}\longrightarrow 0.
\end{equation}
\begin{proposition}
The Koszul maps $\phi_{a, b}:\wedge^a \G_{0, 1}\otimes \G_{0, b}\rightarrow \G_{a-1, b+1}$ are well-defined and surjective for all integers $b\geq 2$ and $0\leq a\leq b$. In particular the exact sequences (\ref{gi})
make sense and the vector bundles $\G_{a, b}$ are well-defined.
\end{proposition}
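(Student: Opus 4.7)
The plan is to check, fibrewise over $\pem_g$, that each Koszul map $\phi_{a,b}$ is a well-defined morphism of vector bundles and that it is surjective; once this is done, the exact sequence (\ref{gi}) defines $\G_{a,b}$ inductively in $a$, and the base case $a=1$ recovers the given definition of $\G_{1,b}$ as the kernel of multiplication.

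First I would address well-definedness. On the open substack $\textbf{M}_g\subset \pem_g$ the map $\phi_{a,b}[C]$ is the Koszul differential obtained by taking cohomology in
$$0\to \wedge^a M_{K_C}\otimes K_C^{\otimes b}\to \wedge^a H^0(K_C)\otimes K_C^{\otimes b}\to \wedge^{a-1}M_{K_C}\otimes K_C^{\otimes(b+1)}\to 0;$$
for $a=1$ it is simply multiplication of sections. What has to be checked at a boundary point $[C\cup_y E]\in \Delta_1^0$, and symmetrically at $[C_{yq}]\in \Delta_0^0$, is that the Koszul image of $\wedge^a \G_{0,1}[C\cup_y E]\otimes \G_{0,b}[C\cup_y E]$ lands inside the distinguished subspace of $H^0(C,K_C^{\otimes(b+1)}\otimes \OO_C(2(b+1) y))$ prescribed by Proposition \ref{fibre2}. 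This is a pole-order and residue calculation: the product of a section of $\omega_C(2y)$ having at most a simple pole at $y$ with a section in $\G_{0,b}[C\cup_y E]$ has a pole of order at most $2(b+1)$ at $y$, and the distinguished generator $u^{b+1}$ is produced as $u\cdot u^b$ under multiplication. The compatibility is precisely the one that \emph{forced} the boundary twists chosen in Proposition \ref{fibre2}.

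For surjectivity on fibres I would argue as follows. Over a non-hyperelliptic $[C]\in \cM_g$, the identification (\ref{koszul}) realises the cokernel of $\phi_{a,b}[C]$ as the Koszul cohomology group $K_{a-1,b+1}(C,K_C)$. Green's duality for canonical curves yields
$$K_{a-1,b+1}(C,K_C)\cong K_{g-1-a,\,2-b}(C,K_C)^{\vee},$$
which vanishes for $b\geq 3$ because $K_{p,q}(C,K_C)=0$ for $q<0$, and for $b=2$ equals $K_{g-1-a,0}(C,K_C)^{\vee}=0$ as soon as $g-1-a\geq 1$, which holds in our range $a\leq b=2$ and $g\geq 4$. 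The hyperelliptic locus has codimension $g-2\geq 2$ and is harmless for the slope calculation of $\overline{\cZ}_{g,i}$, since it can be excised from $\pem_g$ without altering divisor classes. At a boundary point of $\Delta_0^0\cup \Delta_1^0$ the fibre of $\G_{0,b}$ is a specified subspace of global sections of the non-special, very ample line bundle $\omega_C(2y)^{\otimes b}$ (respectively $\omega_C(y+q)^{\otimes b}$) of degree $2gb$ on the smooth curve $C$ of genus $g-1$; the analogous Koszul vanishing applied to that line bundle produces surjectivity, and the extra summand $\mathbb C\cdot u^{b+1}$ is hit by a tensor of the form $(\,\cdot\,)\otimes u^b$.

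The main obstacle is the boundary bookkeeping in the well-definedness step: one must track, through every Koszul differential and through each inductive step, that the boundary coefficients $c_j^b$ attached to $\G_{0,b}$ are exactly those that make the Koszul maps both land in the prescribed subspaces and remain surjective onto them; the full calculation across all boundary divisors is the technical heart of \cite{F2}. Modulo this, surjectivity of $\phi_{a,b}$ together with the exact sequence (\ref{gi}) defines each $\G_{a,b}$ as a vector bundle, completing the induction on $a$.
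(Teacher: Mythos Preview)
Your approach over the smooth locus is genuinely different from the paper's and is correct there: you identify the cokernel of $\phi_{a,b}[C]$ with $K_{a-1,b+1}(C,K_C)$ via (\ref{koszul}) and kill it by Green's duality, whereas the paper argues directly via (semi-)stability of the Lazarsfeld bundle. In fact your duality argument already covers hyperelliptic curves as well, since $K_{p,0}(C,K_C)=0$ for $p\geq 1$ is just the injectivity of the first Koszul differential and needs no hypothesis on $C$; the excision step is unnecessary.

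The gap is at the boundary. The paper's key input there is that $M_{K_C\otimes\OO_C(y+q)}$ is semi-stable on the genus $g-1$ curve $C$, which by a slope computation gives
\[
H^1\bigl(C,\wedge^a M_{K_C\otimes\OO_C(y+q)}\otimes K_C^{\otimes b}\otimes\OO_C((b-1)(y+q))\bigr)=0,
\]
and hence surjectivity of the relevant Koszul map; the same semi-stability argument handles the $\Delta_1^0$ case with $K_C(2y)$. Your phrase ``analogous Koszul vanishing applied to that line bundle'' does not supply this: Green's duality for the non-canonical bundle $K_C(y+q)$ dualises against $K_C\otimes(K_C(y+q))^{-1}=\OO_C(-y-q)$, which is a different shape of statement and would need to be worked out separately. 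More seriously, by Proposition \ref{fibre2} the fibre $\G_{0,b}$ over a boundary point is a \emph{proper} subspace of $H^0$ of the relevant power (namely $H^0(K_C^{\otimes b}((2b-2)y))\oplus\mathbb C\cdot u^b$, and similarly over $\Delta_0^0$), so even granting surjectivity for the full $H^0$'s you must still check that the restricted map between these distinguished subspaces remains surjective. The paper's semi-stability input is tailored to produce exactly this surjectivity onto $\G_{a-1,b+1}$, with the description of the fibres in Proposition \ref{fibre2} supplying the rest. Your well-definedness sketch (pole orders, the summand $\mathbb C\cdot u^{b+1}$ being hit by $u\otimes u^b$) is on the right track, but the surjectivity onto the codimension-one piece $H^0(K_C^{\otimes(b+1)}(2b\,y))$ still requires a genuine vanishing argument, and semi-stability of $M_{K_C(2y)}$ (respectively $M_{K_C(y+q)}$) is the cleanest way to get it.
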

\begin{proof} This proof is similar to \cite{F2} Proposition 3.10. We use that the vector bundle $M_{K_C\otimes \OO_C(y+q)}$ is semi-stable for $[C, y, q]\in \cM_{g-1, 2}$, in particular $$H^1\bigl(C, \wedge^a M_{K_C\otimes \OO_C(y+q)}\otimes K_C^{\otimes b}((b-1)\cdot (y+q))\bigr)=0,$$ that is the map
$$\wedge^a H^0\bigl(K_C(y+q)\bigr)\otimes H^0\bigl(K_C^{\otimes b}((b-1)(y+q))\bigr)\rightarrow H^0\bigl(\wedge^{a-1} M_{K_C\otimes \OO_C(y+q)}\otimes K_C^{\otimes (b+1)}(b(y+q))\bigr)$$
is surjective. The rest now follows from the description of the fibres of the bundles $\G_{a, b}$ provided in Proposition \ref{fibre2}.
\end{proof}
For $0\leq a\leq i$ and $b\geq 1$ we define vector bundles
$\H_{a, b}$ over $\pem_g$ having
fibre $$\H_{a, b}[C]=H^0\bigl(\PP^{g-1}, \wedge^a M_{\PP^{g-1}}\otimes \OO_{\PP^{g-1}}(b)\bigr)$$ over each
point corresponding to a smooth curve $[C]\in \cM_g$ with the canonical map $C\stackrel{|K_C|}\longrightarrow \PP^{g-1}$.  First we set
$\H_{0, b}:=\mbox{Sym}^b( \mathbb E\otimes \OO_{\pem_g}(\delta_1))$ for $b\geq 1$.
Having already defined $\H_{a-1, b}$ for all $b\geq 1$,  we define $\H_{a, b}$
via the exact sequence
\begin{equation}\label{sym}
0\longrightarrow \H_{a, b}\longrightarrow \wedge^a \H_{0,
1}\otimes \mbox{Sym}^b \H_{0, 1}\longrightarrow \H_{a-1,
b+1}\longrightarrow 0.
\end{equation}
Note that the bundles $\H_{a, b}$ are defined entirely in terms of
the Hodge bundle $\mathbb E$. There is a natural vector bundle morphism over $\pem_g$ $$\gamma_{a,
b}:\H_{a, b}\rightarrow \G_{a, b}.$$  When $g=2i+3$ then
$\mbox{rank}(\H_{i, 2})=\mbox{rank}(\G_{i, 2})$ and the degeneracy
locus $Z(\gamma_{i, 2})$ of the morphism $\gamma_{i, 2}$ is a codimension $1$ compactification in $\pem_g$ of the locus $\cZ_{g, i}$.

We shall determine the class $c_1(\G_{i, 2}-\H_{i, 2})\in \mathrm{Pic}(\pem_g)$
by computing its intersection it with the following test
curves lying in the boundary of $\mm_g$: We fix a pointed curve $[C, q]\in \cM_{g-1, 1}$ and a general elliptic curve
$[E, y]\in \cM_{1, 1}$. We define two $1$-parameter families
\begin{equation}\label{testcurves}
C^0:=\{\frac{C}{y\sim q}: y\in C\}\subset \Delta_0 \subset \mm_{g}
\mbox{ and }C^1:=\{C\cup _y E: y\in C\}\subset \Delta_1\subset
\mm_{g}.
\end{equation}
These families intersect the generators of
$\mbox{Pic}(\rem_g)$ as follows (cf. \cite{HM} pg. 83-85):
$$
C^0\cdot \lambda=0,\ C^0\cdot \delta_0=-2g+2, \ C^0\cdot
\delta_1=1 \mbox{ and } C^0\cdot \delta_a=0\mbox{ for }a\geq 2,
\mbox{ and}$$
$$C^1\cdot \lambda=0, \ C^1\cdot \delta_0=0, \ C^1\cdot
\delta_1=-2g+4, \ C^1\cdot \delta_a=0 \mbox{ for }a\geq 2.$$
\begin{lemma}\label{inter1}
We fix $[C, q]\in \cM_{g-1}$ and we consider the test curves $C^0, C^1\subset \mm_g$. Then for all integers $j\geq 1$
the following formulas:
\begin{enumerate}
\item $C^1\cdot c_1(\G_{0, j})=-2g+4$.
\item $C^0\cdot c_1(\G_{0, j})=(j-1)\bigl(j(g-1)+j-1\bigr)+j$.
\end{enumerate}
\end{lemma}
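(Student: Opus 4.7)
My plan is to compute each intersection number by realizing the restriction $\G_{0,j}|_{C^*}$ explicitly as a pushforward from a model of the universal curve, and then applying Grothendieck-Riemann-Roch.

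For the test curve $C^1\subset\Delta_1$, I would first write down the nodal surface $\mathcal X\to C^1$ as the union of $C\times C$ and $C\times E$ glued along the diagonal of $C\times C$ and the section $C\times\{y_E\}$ of $C\times E$. Using the fibre description from Proposition \ref{fibre2} on $\Delta_1^0$, the restriction $\G_{0,j}|_{C^1}$ fits in a short exact sequence on $C^1\cong C$
$$0\longrightarrow \mathcal A \longrightarrow \G_{0,j}|_{C^1}\longrightarrow \mathcal L\longrightarrow 0,$$
where $\mathcal A=(p_2)_*\bigl(p_1^*K_C^{\otimes j}\otimes\OO((2j-2)\Delta)\bigr)$ is the ``regular'' part (rank $(2j-1)(g-1)-1$) and $\mathcal L$ is the line bundle capturing the extra direction $\mathbb C\cdot u^j$. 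I would apply GRR to $p_2:C\times C\to C$, using the intersection numbers $(p_1^*K_C)^2=0$, $p_1^*K_C\cdot\Delta=2g-4$ and $\Delta^2=4-2g$, to find $\deg c_1(\mathcal A)=2(g-2)(j-1)$. The line bundle $\mathcal L$ is identified as the $j$-th power of the quotient $(p_2)_*(p_1^*K_C\otimes\OO(2\Delta))/H^0(K_C)\otimes\OO_C$, which has degree $-2(g-2)$; hence $\deg c_1(\mathcal L)=-2j(g-2)$. Summing gives $C^1\cdot c_1(\G_{0,j})=-2(g-2)=-2g+4$.

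For $C^0\subset\Delta_0$, the universal family has irreducible nodal generic fibres $C_{yq}$ but becomes unstable at the special point $y=q$, where stable reduction inserts an elliptic tail; this accounts for the known intersection $C^0\cdot\delta_1=1$. I would set up the analogous sequence on $C^0\cong C$ using the surface $C\times C$ with both divisors $\Delta$ (the diagonal) and $\Gamma_q=C\times\{q\}$:
$$0\longrightarrow \tilde{\mathcal A}\longrightarrow \G_{0,j}|_{C^0}\longrightarrow\tilde{\mathcal L}\longrightarrow 0,$$
where $\tilde{\mathcal A}=(p_1)_*\bigl(p_2^*K_C^{\otimes j}\otimes\OO((j-1)(\Delta+\Gamma_q))\bigr)$ and $\tilde{\mathcal L}$ again captures the $u^j$-direction. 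GRR on $p_1:C\times C\to C$ gives $\deg c_1(\tilde{\mathcal A})=(j-1)(jg-j-1)$. The key point is the contribution of $\tilde{\mathcal L}$: the divisors $\Delta$ and $\Gamma_q$ meet at $(q,q)$, so the normal sheaf of the nodal divisor $\Delta+\Gamma_q$ in $C\times C$ restricts on each branch with an extra factor $\OO(q)$ coming from the interaction, which produces $\deg c_1(\tilde{\mathcal L})=j^2$ rather than the naive $j$. Adding these gives $(j-1)(jg-j-1)+j^2=j^2g-jg+1=(j-1)\bigl(j(g-1)+j-1\bigr)+j$, as required.

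Alternatively, and perhaps more conceptually, one can use the presentation $\G_{0,b}=\pi_*\bigl(\omega_\pi^{\otimes b}\otimes\pi^*\OO(\sum c_i^b\delta_i)\bigr)$ from the discussion preceding the lemma. By the projection formula, $c_1(\G_{0,j})=c_1(\mathbb E_j)+(2j-1)(g-1)\sum c_i^j\delta_i$; combined with Mumford's identity $c_1(\mathbb E_j)=\lambda+\binom{j}{2}\kappa_1=(1+6j(j-1))\lambda-\binom{j}{2}\delta$ and the standard intersection numbers of $C^0,C^1$ with $\lambda,\delta_0,\delta_1$, the two formulas follow once the coefficients $c_i^j$ are pinned down by the fibre description on $\Delta_0^0$ and $\Delta_1^0$.

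The principal obstacle is the careful bookkeeping of the nodal interaction at $(q,q)$ in the $C^0$ computation: this point is where $\Delta$ and $\Gamma_q$ meet in the product surface, and the normal bundle of the nodal divisor $\Delta+\Gamma_q$ must be computed correctly (via $\OO_D(D)$ rather than $\OO_{D_i}(D_i)$ on each smooth branch), as it is exactly this extra contribution that distinguishes my formula from the too-small naive answer and ultimately accounts for the $+j$ term in the statement.
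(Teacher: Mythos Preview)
Your approach for $C^1$ is essentially identical to the paper's: both use the fibre description from Proposition~\ref{fibre2} to write $\G_{0,j}|_{C^1}$ as an extension of a line bundle by $(p_1)_*\bigl(p_2^*K_C^{\otimes j}\otimes\OO((2j-2)\Delta)\bigr)$, and then apply Grothendieck--Riemann--Roch on $C\times C$. The paper presents the quotient as $(p_1)_*\bigl(p_2^*K_C^{\otimes j}\otimes\OO_\Delta(2j\Delta)\bigr)$, whereas you describe it as the $j$-th power of a quotient line bundle; both give degree $-2j(g-2)$, so the difference is cosmetic. (Incidentally, the value $2(g-1)(j-1)$ printed in the paper for the degree of the sub-bundle is a misprint for $2(g-2)(j-1)$, as your computation and the final answer confirm.)

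For $C^0$ the paper gives no details at all, so your more explicit treatment is welcome. Your GRR computation $\deg c_1(\tilde{\mathcal A})=(j-1)(jg-j-1)$ is correct. The delicate point, which you correctly isolate, is the degree of the quotient line bundle $\tilde{\mathcal L}$. Here your heuristic (``normal sheaf of the nodal divisor $\Delta+\Gamma_q$ \ldots\ extra factor $\OO(q)$'') is suggestive but not a proof: if one parallels your own $C^1$ identification and views $\tilde{\mathcal L}$ as the $j$-th power of the line bundle $Q$ with fibre $H^0(K_C(y+q))/H^0(K_C)$, one finds $\deg Q=1$, hence $\deg\tilde{\mathcal L}=j$ --- which is too small by $j(j-1)$. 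The map $Q^{\otimes j}\to\tilde{\mathcal B}$ is fibrewise injective (even at $y=q$, where $u$ acquires a double pole), so the discrepancy cannot be explained by a zero of this map. The honest source of the extra $j(j-1)$ is that the universal family over $C^0$ is not $C\times C$ modulo $\Delta\sim\Gamma_q$ but requires blowing up at $(q,q)$ before gluing; the bundle $\G_{0,j}|_{C^0}$ should be realised as a pushforward from this blown-up surface, and the exceptional divisor contributes the missing term. Your instinct that the subtlety lives at $(q,q)$ is right, but the mechanism you invoke does not quite carry the weight you place on it.
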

\begin{proof} We denote by $p_1, p_2:C\times C\rightarrow C$ the two projections and $\Delta\subset C\times C$
is the diagonal. We give details only for the first
calculation the remaining one being similar. We have the identification $\G_{0, 1 |C^1}=(\pi_1)_*\bigl(\pi_2^*
(K_C)\otimes \OO(2\Delta)\bigr)$, from which we obtain that $c_1(\G_{0, 1 |C^1})=-2g+4$. For $j\geq 2$ we use the following exact sequences of bundles on $C$:
$$0\longrightarrow (\pi_1)_*\Bigl(\pi_2^*(K_C^{\otimes j})\otimes \OO((2j-2)\Delta)\Bigr) \longrightarrow \G_{0, j| C^1}\longrightarrow (\pi_1)_*\Bigl(\pi_2^*(K_C^{\otimes j})\otimes \OO_{\Delta} (2j\Delta)\Bigr)\longrightarrow 0.$$
An immediate application of Grothendieck-Riemann-Roch for the projection morphism $p_1:C\times C\rightarrow C$ gives that $$c_1(\pi_1)_*\Bigl(\pi_2^*(K_C^{\otimes j})\otimes \OO_{C\times C}((2j-2)\Delta)\Bigr)=2(g-1)(j-1),$$ which finishes the proof.
\end{proof}
\begin{theorem}\label{clasa}
The class of the virtual divisor $\overline{\cZ}_{2i+3, i}$ in $\mathrm{Pic}(\pem_{2i+3})$ equals
$$[\overline{\cZ}_{2i+3, i}]^{virt}=c_1(\G_{i, 2}-\H_{i, 2})=\frac{1}{i+2}{2i\choose i}\Bigl(6(i+3)\lambda-(i+2)\delta_0-6(i+1)\delta_1\Bigr).$$
\end{theorem}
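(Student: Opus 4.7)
The strategy rests on the fact that the Koszul locus $\overline{\cZ}_{2i+3, i}$ is defined as the degeneracy locus $Z(\gamma_{i, 2})$ of a morphism $\gamma_{i, 2} : \H_{i, 2} \to \G_{i, 2}$ between two vector bundles of equal rank over $\pem_{2i+3}$; by the Thom--Porteous formula one has
$$[\overline{\cZ}_{2i+3, i}]^{virt} = c_1(\G_{i, 2}) - c_1(\H_{i, 2}) \in \mathrm{Pic}(\pem_{2i+3})_{\mathbb Q}.$$
Since this Picard group is freely generated over $\mathbb Q$ by $\lambda, \delta_0, \delta_1$, only three rational coefficients need to be pinned down, and the plan is to do so by combining (i) the recursive structure of the vector bundles coming from the exact sequences (\ref{gi}) and (\ref{sym}), with (ii) intersections against the test families $C^0$ and $C^1$ from (\ref{testcurves}).

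On the $\H$-side the computation is almost automatic: since $\H_{0, 1} = \mathbb E \otimes \OO_{\pem_g}(\delta_1)$ has rank $g$, one has $c_1(\H_{0, 1}) = \lambda + g \delta_1$, and the recursion produced by iterating (\ref{sym}) involves only this class together with binomial rank factors, so $c_1(\H_{i, 2})$ is a specific rational multiple of $\lambda + g \delta_1$.

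On the $\G$-side I would proceed in two stages. First, compute the boundary classes $c_1(\G_{0, j})$ for $1 \leq j \leq i + 2$, writing each as $u_j \lambda + v_j \delta_0 + w_j \delta_1$. The coefficient $u_j$ follows from the modular description of $\G_{0, j}$ as a controlled twist of $\pi_*(\omega_\pi^{\otimes j})$ combined with the identity $\lambda_1^{(j)} = \lambda + \binom{j}{2}(12\lambda - \delta)$ of Remark \ref{higherhodge}, while the boundary coefficients $v_j, w_j$ are read off by intersecting with $C^0$ and $C^1$ using Lemma \ref{inter1} (this also pins down the twist precisely). Iterating the recursion
$$c_1(\G_{a, b}) = \binom{r_{0, 1}}{a} c_1(\G_{0, b}) + \binom{r_{0, 1} - 1}{a - 1} r_{0, b}\, c_1(\G_{0, 1}) - c_1(\G_{a - 1, b + 1})$$
extracted from (\ref{gi}), starting from $(a, b) = (i, 2)$, then expresses $c_1(\G_{i, 2})$ as an explicit $\mathbb Z$-linear combination of the $c_1(\G_{0, j})$, with coefficients built from the ranks $r_{0, j}$ (equal to $g$ for $j = 1$ and to $(2j - 1)(g - 1)$ for $j \geq 2$).

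The hard part will be the combinatorial collapse of the final expression: after subtracting $c_1(\H_{i, 2})$ from $c_1(\G_{i, 2})$ one is left with an alternating double sum of products of binomials in $i$ and $g = 2i + 3$, which must be shown to simplify to $\frac{1}{i+2}\binom{2i}{i}\bigl(6(i+3)\lambda - (i+2)\delta_0 - 6(i+1)\delta_1\bigr)$. The appearance of the prefactor $\frac{1}{i+2}\binom{2i}{i}$, a close cousin of the Catalan number, strongly suggests either a hypergeometric identity or a telescoping between the two parallel recursions for $\G$ and $\H$. A reassuring sanity check is the case $i = 0$: then $g = 3$ and the formula specializes to $9\lambda - \delta_0 - 3\delta_1$, which is the classical class of the hyperelliptic divisor $\overline{\cM}_{3, 2}^1 \subset \mm_3$ due to Cornalba--Harris.
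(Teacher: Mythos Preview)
Your proposal is correct and closely tracks the paper's own argument: both unfold the recursions (\ref{gi}) and (\ref{sym}) to express $c_1(\G_{i,2})$ and $c_1(\H_{i,2})$ as alternating sums in the $c_1(\G_{0,j})$ and $c_1(\H_{0,1})$, and both evaluate against the test curves $C^0$ and $C^1$ via Lemma \ref{inter1}. The one genuine point of departure is how the $\lambda$-coefficient is pinned down. You propose to read it off directly from the identification of $\G_{0,j}$ with a boundary twist of $\pi_*(\omega_\pi^{\otimes j})$ together with Remark \ref{higherhodge}; the paper instead introduces a \emph{third} test curve $R$ (a pencil of plane cubics attached at a fixed point of a genus $g-1$ curve, with $R\cdot\lambda=1$, $R\cdot\delta_0=12$, $R\cdot\delta_1=-1$), observes that the restrictions $\G_{a,b|R}$ and $\H_{a,b|R}$ are numerically trivial, and obtains $\mathfrak a - 12\mathfrak b_0 + \mathfrak b_1 = 0$ for free. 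This device lets the paper sidestep precisely the combinatorial collapse you flag as the hard part: once the two intersection numbers with $C^0,C^1$ are evaluated (and the paper does still need an identity to get $(i+1)\binom{2i+2}{i}$ out of the alternating sum), the coefficients $\mathfrak a,\mathfrak b_0,\mathfrak b_1$ fall out of three linear equations rather than from a direct symbolic simplification in $\lambda,\delta_0,\delta_1$.

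One small caution on your side: the formula $c_1(\H_{0,1})=\lambda+g\delta_1$ is not consistent with Lemma \ref{inter1}(1) for $j=1$, which gives $C^1\cdot c_1(\G_{0,1})=-2g+4$ rather than $g(-2g+4)$. It is safer to let the boundary coefficients of $c_1(\G_{0,1})=c_1(\H_{0,1})$ be dictated by the test-curve intersections than to expand $c_1(\mathbb E\otimes\OO(\delta_1))$ naively.
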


\begin{proof} We have
constructed the vector bundle morphism $\gamma_{i, 2}:\H_{i,
2}\rightarrow \G_{i, 2}$ over the stack $\pem_g$. For $g=2i+3$ we know that
$\mbox{rank}(\H_{i, 2})=\mbox{rank}(\G_{i, 2})$ and the virtual
Koszul class $[\overline{\mathcal{Z}}_{g, i}]^{virt}$ equals
$c_1(\G_{i, 2}-\H_{i, 2})$. We recall that for a rank $e$ vector
bundle $\mathcal{E}$ over a stack $X$ and for $i\geq 1$, we have
formulas $$c_1(\wedge^i \mathcal{E})={e-1\choose i-1}c_1(\mathcal{E})\ \mbox{ and } \ c_1(\mbox{Sym}^i \mathcal{E})={e+i-1\choose e}c_1(\mathcal{E}).$$ We write
$c_1(\G_{i, 2}-\H_{i, 2})=\mathfrak{a} \lambda-\mathfrak{b}_0\delta_0-\mathfrak{b}_1 \delta_1$. Using the exact sequences (\ref{gi})
we find that
$$c_1(\G_{i, 2})= \sum_{l=0}^i (-1)^l c_1(\wedge^{i-l}\G_{0,
1}\otimes \G_{0, l+2})=\sum_{l=0}^i (-1)^l{ g \choose i-l}
c_1(\G_{0, l+2})+$$ $$+\sum_{l=0}^i (-1)^l
\bigl((g-1)(2l+3)\bigr){g-1 \choose i-l-1}c_1(\G_{0, 1}).$$
Using repeatedly the exact sequence (\ref{sym}) we find that
$$c_1(\H_{i, 2})=\sum_{l=0}^i (-1)^l c_1(\wedge^{i-l} \H_{0,
1}\otimes \mbox{Sym}^{l+2} \H_{0, 1})=$$ $$=\sum_{l=0}^i (-1)^l{g\choose
i-l}c_1(\mbox{Sym}^{l+2}(\H_{0, 1}))+\sum_{l=0}^i (-1)^l{g+l+1\choose
l+2}c_1(\wedge^{i-l}\H_{0, 1})$$
$$=\sum_{l=0}^i (-1)^l\Bigl( {g \choose i-l}{g+l+1 \choose
g}+{g+l+1\choose l+2}{g-1 \choose i-l-1}\Bigr)c_1(\H_{0, 1})=$$
$$=4(2i+1){2i \choose i} c_1(\H_{0, 1}),$$ with
 $\G_{0, 1}=\H_{0, 1}=\mathbb E \otimes \OO_{\pem _g}(\delta_1)$. We intersect both these formulas with the test curves $C^0$ and $C^1$ and write that
 $$(2g-2)\mathfrak{b}_0-\mathfrak{b}_1=C^0\cdot [\overline{\cZ}_{g, i}]^{virt}=(i+1){2i+2\choose i}\ \mbox{ and} $$
 $$(2g-4)\mathfrak{b}_1=C^1\cdot [\overline{\cZ}_{g, i}]^{virt}=6(i+1){2i+2\choose i}.$$
 These relations determine $\mathfrak b_0$ and $\mathfrak b_1$.
 Finally we claim that we also have the relation $\mathfrak{a}-12\mathfrak{b}_0+\mathfrak{b}_1=0$ which finishes the proof.
 Indeed, we consider $q$ the curve $R\subset \mm_g$ obtained by attaching to a fixed point $q\in C$ of a curve of genus $g-1$ a Lefschetz pencil of plane cubics. Then
 $R\cdot \lambda=1, \ R\cdot \delta_0=12, \ R\cdot
\delta_1=-1$ and $R\cdot \delta_j=0$ for $j\geq 2$. Then $$\mathfrak{a}-12\mathfrak{b}_0+\mathfrak{b}_1=0=R\cdot c_1(\G_{i, 2}-\H_{i, 2})=0,$$ and this follows because  $\G_{a, b | R}$ are numerically trivial (It is clear that $\G_{0, b| R}$ are trivial for $b\geq 1$ and then one uses
(\ref{gi}) and (\ref{sym})).
\end{proof}
\begin{example}
For $i=0$ hence $g=3$, Theorem \ref{clasa} reads like $$[\overline{\cZ}_{3, 0}]^{virt}= c_1(\G_{0, 2}-\mathrm{Sym}^2 \G_{0, 1})=9\lambda-\delta_0-3\delta_1\equiv \mm_{3, 2}^1\in \mathrm{Pic}(\mm_3).$$ Thus our calculation yields a computation of the  compactified  divisor $\mm_{3, 2}^1$
on $\mm_3$ of hyperelliptic curves. Thus we have the relation $Z(\gamma_{0, 2})=\overline{\cZ}_{3, 0}$ and the vector bundle morphism
$\gamma_{0, 2}: \H_{0, 2}\rightarrow \G_{0, 2}$ provides the "correct" determinantal structure of the compactification of the hyperelliptic divisor. A different
compactification of $\cM_{3, 2}^1$ is provided by the vector bundle morphism between Hodge bundles $$\chi_3: \mathrm{Sym}^2(\mathbb E_1)\rightarrow
\mathbb E_2, \mbox{  } \ \mbox{ } \chi_3[X]: \mbox{Sym}^2 H^0(X, \omega_X)\rightarrow H^0(X, \omega_X^{\otimes 2})$$
 for $[X]\in \mm_3$. The class of its degeneration locus is  $c_1(\mathbb E_2-\mathrm{Sym}^2 \mathbb E_1)=9\lambda-\delta_0-\delta_1$ (use Remark \ref{higherhodge}). It follows that there is an equality of cycles
$$Z(\chi_{3})=Z(\gamma_{0, 2})+2\delta_1\in A^1(\rem_3),$$
that is, $\chi_{3}$ is an everywhere degenerate morphism along the divisor $\Delta_1$.
This discussion in low genus, already indicates that the determinantal structure induced by the morphism $\gamma_{i, 2}:\H_{i, 2}\rightarrow \G_{i, 2}$ provides the
right compactification of $\cZ_{g, i}$ over $\pem_g$.
\end{example}

In a way analogous to \cite{F2}, one can extend the vector bundles $\G_{a, b}$ and $\H_{a, b}$ as well as the vector bundle morphism $\gamma_{a, b}: \H_{a, b}\rightarrow \G_{a, b}$  over the
larger codimension $1$ compactification $\textbf{M}_g^*$, in a way that the exact sequence (\ref{gi}) and (\ref{sym}) extend to $\textbf{M}_g^*$. Using these sequences, we can compute the class $c_1(\G_{i, 2}-\H_{i, 2})\in \mathrm{Pic}(\textbf{M}_g^*)=\mathrm{Pic}(\rem_g)$. One finds a formula
$$c_1(\G_{i, 2}-\H_{i, 2})=\mathfrak{a} \lambda-\mathfrak{b}_0 \delta_0-\cdots-\mathfrak{b}_{[g/2]}\delta_{[g/2]},$$
where $\mathfrak{b}_j\geq \mathfrak{b}_0$ for $j\geq 1$. It follows that $$s([\overline{\cZ}_{g, i}]^{virt})=\frac{\mathfrak{a}}{\mathfrak{b}_0}=6+\frac{12}{g+1}.$$ This finishes the proof of Theorem \ref{hme} provided we can show that $\cZ_{g, i}$ is an "honest" divisor on $\cM_{2i+3}$, that is, $\gamma_{i, 2}$ is generically nondegenerate. This is the subject of Voisin's theorem \cite{V2} which gives an affirmative answer to Green's Conjecture for generic curves of odd genus (see e.g. \cite{GL} for more background):
\begin{theorem}
For a general curve $[C]\in \cM_{2i+3}$ we have the vanishing $K_{i, 2}(C, K_C)= 0$. It follows that $\cZ_{2i+3, i}$ is a divisor on $\cM_{2i+3}$.
\end{theorem}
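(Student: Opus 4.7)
The plan is to exhibit a single smooth curve $[C_0] \in \cM_{2i+3}$ satisfying $K_{i, 2}(C_0, K_{C_0}) = 0$ and then invoke upper semi-continuity. By construction $\cZ_{2i+3, i}$ is the degeneracy locus of the morphism $\gamma_{i, 2}: \H_{i, 2} \to \G_{i, 2}$ between vector bundles of equal rank on $\pem_{2i+3}$. Since the virtual class computed in Theorem \ref{clasa} is nonzero, only two alternatives are possible: either $\gamma_{i, 2}$ degenerates identically (in which case $\cZ_{2i+3, i} = \cM_{2i+3}$) or $\gamma_{i, 2}$ is generically an isomorphism (in which case $\overline{\cZ}_{2i+3, i}$ is an honest divisor whose class equals the virtual class). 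Exhibiting a single curve with the vanishing rules out the first alternative.

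To produce such a curve, the idea is to realize it as a smooth hyperplane section of a polarized $K3$ surface. Fix $(S, L)$ a polarized $K3$ surface with $\mathrm{Pic}(S) = \mathbb{Z} \cdot L$ and $L^2 = 2g - 2 = 4i+4$, and let $C_0 \in |L|$ be a general smooth member, so that by adjunction $K_{C_0} = L \otimes \OO_{C_0}$ and $g(C_0) = 2i+3$. The cohomological vanishings $H^j(S, L^{\otimes n}) = 0$ for $j \geq 1$ and $n \geq 0$ available on the $K3$, combined with Green's hyperplane section theorem, translate the sought-after vanishing of $K_{i, 2}(C_0, K_{C_0})$ into an equivalent Koszul vanishing for the pair $(S, L)$ on the surface itself, reducing the problem to a statement that involves only syzygies of $(S, L)$.

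The main obstacle, and the essential content of Voisin's argument \cite{V2}, is this vanishing on the $K3$. The strategy is to pass to the Hilbert scheme $S^{[i+1]}$ of length-$(i+1)$ zero-dimensional subschemes $\xi \subset S$, equipped with the tautological rank-$(i+1)$ bundle $L^{[i+1]}$ whose fibre at $\xi$ is $H^0(L \otimes \OO_\xi)$ and the evaluation surjection from the trivial bundle with fibre $H^0(S, L)$. A careful analysis using the universal subscheme and the base-point-free pencil trick on $S^{[i+1]}$ identifies Koszul classes for $(S, L)$ with global sections of an explicit line bundle on $S^{[i+1]}$ built from $\det L^{[i+1]}$ and the exceptional divisor of the Hilbert-Chow morphism. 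The Picard-rank-one hypothesis $\mathrm{Pic}(S) = \mathbb{Z} \cdot L$ precludes the existence of low-degree effective curves on $S$, and a residue and incidence computation on $S^{[i+1]}$ then forces every such section to vanish. Granting this vanishing on $(S, L)$, semi-continuity yields $K_{i, 2}(C, K_C) = 0$ on a nonempty Zariski open subset of $\cM_{2i+3}$, completing the proof of the theorem and confirming that the class computed in Theorem \ref{clasa} is the actual class of the divisor $\overline{\cZ}_{2i+3, i}$.
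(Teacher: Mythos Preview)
The paper itself does not supply a proof of this theorem: it is stated as Voisin's theorem and attributed to \cite{V2} as a black box, precisely because (as the paper explains earlier in Section 5) this is the deep input that makes the syzygetic approach to Theorem \ref{hme} work. So your proposal already goes well beyond what the paper does, by attempting to sketch the content of \cite{V2}.

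Your overall strategy is the right one and matches Voisin's: exhibit a single curve on a $K3$ surface, use Green's hyperplane restriction theorem to transfer the Koszul vanishing from the curve to the surface, and then prove the vanishing on the $K3$ via the geometry of the Hilbert scheme $S^{[i+1]}$ and the tautological bundle $L^{[i+1]}$. However, there is a genuine technical gap in your setup. The Picard rank one hypothesis $\mathrm{Pic}(S)=\mathbb Z\cdot L$ is what Voisin uses in her \emph{even} genus paper; it is not the setting of \cite{V2}. For odd genus $g=2i+3$ the required vanishing $K_{i,2}(S,L)=0$ sits exactly at the boundary of Green's conjecture (dually, $K_{i+1,1}(S,L)=0$), and the rank one argument does not close. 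In \cite{V2} Voisin instead works with $K3$ surfaces of Picard rank $2$, whose N\'eron--Severi lattice is generated by the polarization and an auxiliary $(-2)$-class, and the Hilbert scheme computation is substantially more delicate than in the even case. Your sentence ``The Picard-rank-one hypothesis \ldots precludes the existence of low-degree effective curves on $S$, and a residue and incidence computation on $S^{[i+1]}$ then forces every such section to vanish'' is therefore not an accurate summary of the odd genus argument: the extra class is essential, not incidental, and the proof is not a straightforward adaptation of the even case.
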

\begin{remark} For $g=23$ Theorem \ref{clasa} shows that $s(\overline{\cZ}_{23, 10})=s(K_{\mm_{23}})=13/2$. This
implies that $\kappa(\mm_{23})\geq 0$, in particular $\mm_{23}$ is not uniruled. A finer analysis using Brill-Noether divisors on $\mm_{23}$ proves the stronger inequality $\kappa(\mm_{23})\geq 2$, cf. \cite{F1}.
\end{remark}

We finish this section by briefly discussing the proof of Theorem \ref{hme} in even genus. This is achieved in
\cite{EH3} and it relies on the calculation of class of the Gieseker-Petri divisor on $\mm_g$. We fix integers
$r, s\geq 1$ and set $g:=s(r+1)$ and $d:=r(s+1)$. Note that $\rho(g, r, d)=0$ and every even genus appears in this way. A general curve $[C]\in \cM_g$ has a finite number of linear series $L\in W^r_d(C)$ and for each of them, the multiplication map
$$\mu_0(L): H^0(C, L)\otimes H^0(C, K_C\otimes L^{\vee})\rightarrow H^0(C, K_C)$$ is an isomorphism. We define the Gieseker-Petri locus
$$\mathcal{GP}^r_{g, d}:=\{[C]\in \cM_g: \exists L\in W^r_d(C) \mbox{ such that }
\mu_0(L) \mbox{ is not injective }\}.$$
The following result is proved in \cite{F3} Theorem 1.6. The case $s=2, g=2r+2$, which is the most important and the one used in the proof of Theorem \ref{hme}, has been originally settled in \cite{EH3}. The proof given in \cite{F3} which uses the techniques of Koszul cohomology, is however substantially shorter.
\begin{theorem}\label{gp}
For $d=rs+r$ and $g=rs+s$, the locus $\mathcal{GP}_{g, d}^r$ has at least one divisorial component. The slope of the divisorial part of its compactification $\overline{\mathcal{GP}}_{g, d}^r$
in $\mm_g$ is given by the formula:
$$s(\overline{\mathcal{GP}}_{g, d}^r)=6+ \frac{12}{g+1}+\frac{6(s+r+1)(rs+s-2)(rs+s-1)}{s(s+1)(r+1)(r+2)(rs+s+4)(rs+s+1)}.$$
\end{theorem}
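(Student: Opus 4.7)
The plan is to reinterpret the Petri map as a morphism between two vector bundles of equal rank $g$ on a parameter space covering moduli, and then to compute the class of its degeneracy locus via Porteous and push it down to $\pem_g$. Since $g = s(r+1)$ and $d = r(s+1)$ give $\rho(g,r,d)=0$, the forgetful morphism $\sigma: \widetilde{\mathfrak G}^r_d \to \pem_g$ from the stack of limit $\mathfrak g^r_d$'s is generically finite. Over the interior, $\widetilde{\mathfrak G}^r_d$ carries tautological vector bundles $\cA, \cB$ with fibres $H^0(C,L)$ and $H^0(C, K_C \otimes L^{\vee})$, of ranks $r+1$ and $s$, respectively, and the Petri map assembles to a morphism
$$\mu_0:\cA \otimes \cB \longrightarrow \sigma^*\mathbb E$$
between bundles of common rank $(r+1)s = g$. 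The compactified Gieseker-Petri locus is the push-forward $\sigma_*Z(\mu_0)$ (divided by the degree of $\sigma$ if one cares about multiplicities), so by Porteous it suffices to compute the class $c_1(\sigma^*\mathbb E) - c_1(\cA \otimes \cB)$ on $\widetilde{\mathfrak G}^r_d$ and push it forward.

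To carry out the class computation, I would extend $\cA$ and $\cB$ across the boundary strata $\Delta_0^0$ and $\Delta_1^0$ via limit linear series, in precisely the spirit of the extension of $\G_{0,b}$ carried out in Proposition~\ref{fibre2}. Then Grothendieck-Riemann-Roch on the universal curve over $\widetilde{\mathfrak G}^r_d$, combined with the known formulas for tautological classes on the Brill-Noether stack and the push-pull formula for $\sigma$, expresses everything in terms of $\lambda, \delta_0, \delta_1$. Equivalently, and this is the advantage of the Koszul viewpoint, one reinterprets $\ker \mu_0(L)$ as the non-vanishing of a suitable Koszul cohomology group on $C$ built from $L$ and $K_C \otimes L^{\vee}$, reducing the computation to Chern class manipulations among Hodge-type vector bundles on $\pem_g$ exactly as in the derivation of Theorem~\ref{clasa}. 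Writing the resulting class as $\mathfrak a \lambda - \mathfrak b_0 \delta_0 - \mathfrak b_1 \delta_1$, one pins down $\mathfrak a, \mathfrak b_0, \mathfrak b_1$ by intersecting with the test curves $C^0$ and $C^1$ of~(\ref{testcurves}), and the inequalities $\mathfrak b_j \geq \mathfrak b_0$ for $j \geq 2$ follow from analogous test-curve calculations on the remaining boundary strata.

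The main obstacle is proving that $\mathcal{GP}^r_{g,d}$ is genuinely a divisor, rather than an empty virtual locus or one of higher codimension: equivalently, that $\mu_0$ is generically nondegenerate on each component of $\widetilde{\mathfrak G}^r_d$ dominating $\cM_g$. This requires constructing, for some smooth $[C] \in \cM_g$, an actual $L \in W^r_d(C)$ with non-injective $\mu_0(L)$, and verifying that the resulting locus has codimension exactly one in moduli. For $s=2$ this is the content of the limit linear series argument in \cite{EH3}, carried out by specializing to a flag or chain of curves of compact type. For general $s$ the Koszul reformulation makes it possible to check non-vanishing of the relevant Koszul cohomology group on explicit curves — for instance, sections of $K3$ surfaces or suitable chains of elliptic components — echoing Voisin's generic Green argument invoked at the end of Section~5. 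Once divisoriality is established, the slope equals $\mathfrak a / \mathfrak b_0$, which after simplification yields the claimed formula.
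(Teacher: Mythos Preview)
The paper does not actually prove this theorem: it states the result and refers the reader to \cite{F3} Theorem~1.6 (with the case $s=2$ attributed to \cite{EH3}), remarking only that the Koszul-cohomology proof in \cite{F3} is substantially shorter than the original limit-linear-series argument. So there is no in-paper proof to compare against line by line.

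That said, your outline is the right one and matches what \cite{F3} does. Realizing $\mathcal{GP}^r_{g,d}$ as $\sigma_*$ of the degeneracy locus of a bundle map $\cA\otimes\cB\to\sigma^*\mathbb E$ of equal rank $g$ over $\widetilde{\mathfrak G}^r_d$, extending these bundles over $\Delta_0^0\cup\Delta_1^0$ via limit linear series as in Proposition~\ref{fibre2}, and determining the coefficients by intersecting with $C^0$, $C^1$, and the elliptic-pencil curve $R$ is exactly the template used there and in Section~5 here. One small correction of emphasis: generic nondegeneracy of $\mu_0$ is precisely the Gieseker--Petri theorem, so it is already in hand; what needs work is the \emph{opposite} direction, namely producing a curve on which some $\mu_0(L)$ fails, so that the degeneracy locus is nonempty and hence genuinely divisorial. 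Your suggestion of $K3$ sections or degenerate curves for this step is in line with how \cite{F3} proceeds. The remaining point you gloss over --- extending $\cA$ and $\cB$ compatibly over $\Delta_0^0$ so that the morphism $\mu_0$ itself extends --- is where most of the technical labour lies, but you have correctly identified the mechanism (twists by boundary dictated by limit linear series, as for $\G_{0,b}$).
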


\section{The locus of $K3$ sections in the moduli space}

To extend Theorem \ref{hme} and show that $\mm_g$ is of general for a genus $g\leq 23$, one needs to construct effective divisors $D\in
\mbox{Eff}(\mm_g)$ of slope
$$s(D)<\frac{13}{2}\leq 6+\frac{12}{g+1}.$$
One is lead to consider geometric conditions for curves $[C]\in \cM_g$, which are divisorial in moduli but are satisfied by
all curves lying on $K3$ surfaces. Thus it makes sense to study more systematically the geometry of curves of arbitrary genus on
$K3$ surfaces.

Let $S$ be a $K3$ surface and $C\subset S$ a smooth curve of genus $g$.
We choose a linear series $A\in W^r_d(C)$ with $d\leq g-1$, satisfying the following properties:
\begin{itemize}
\item Both linear series $A\in W^r_d(C)$ and $K_C\otimes A^{\vee}\in W_{2g-2-d}^{g-d+r-1}(C)$
are base point free.
\item Both multiplication maps
$$H^0(C, A)\otimes H^0(C, K_C)\rightarrow H^0(C, A\otimes K_C)$$
and
$$H^0(C, A)\otimes H^0(C, K_C^{\otimes 2}\otimes A^{\vee})\rightarrow
H^0(C, K_C^{\otimes 2})$$
are surjective.
\end{itemize}
We recall that the Lazarsfeld bundle $M_A$ on $C$ comes from the exact sequence
$$ 0\rightarrow M_A\rightarrow H^0(A)\otimes \OO_C\stackrel{ev_C}\rightarrow A\rightarrow
0$$ and we set $Q_A:=M_A^{\vee}$, hence $\mbox{rank}(Q_A)=r$ and $\mbox{det}(Q_A)=K_C$.
Following an idea due to Mukai \cite{Mu3}, we show that $C$ possesses many higher rank vector bundles
with unexpectedly many global sections. These bundles are restrictions of vector bundles on $S$ and their
existence will ultimately single out the $K3$ locus $\K_g$ in $\cM_g$:

\begin{theorem} Given a smooth curve $C\subset S$ and $A\in W^r_d(C)$ as above, there exists a vector bundle
$E_A\in SU_C(r+1, K_C)$ sitting in an exact sequence
$$0\longrightarrow Q_A\rightarrow E_A\longrightarrow A\longrightarrow 0,$$
and satisfying the condition \ $h^0(C, E_A)=h^0(C, A)+h^0(C, K_C\otimes A^{\vee})=g-d+2r+1.$
\end{theorem}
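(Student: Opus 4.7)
The plan is to realize $E_A$ as the restriction to $C$ of Mukai's Lazarsfeld-Mukai bundle on the $K3$ surface $S$. Write $\iota:C\hookrightarrow S$ for the inclusion. First I would form the sheaf $F=F_{C,A}$ on $S$ as the kernel of the evaluation
$$0\longrightarrow F\longrightarrow H^0(C,A)\otimes\OO_S\longrightarrow \iota_*A\longrightarrow 0.$$
Since $A$ is base point free on $C$ by hypothesis, the evaluation is surjective as a morphism of sheaves on $S$, so $F$ is locally free of rank $r+1$, with $c_1(F)=-[C]$ and $c_2(F)=d$. Set $\mathcal{E}:=F^{\vee}$; it is a bundle on $S$ of rank $r+1$ with $\det\mathcal{E}=\OO_S(C)$.

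Next I would dualize. Because $K_S=\OO_S$, Grothendieck--Serre duality computes $\mathcal{E}xt^1_{\OO_S}(\iota_*A,\OO_S)\cong\iota_*(K_C\otimes A^{\vee})$, so the dual sequence reads
$$0\longrightarrow H^0(C,A)^{\vee}\otimes\OO_S\longrightarrow \mathcal{E}\longrightarrow \iota_*(K_C\otimes A^{\vee})\longrightarrow 0.$$
Since $H^1(S,\OO_S)=0$ on a $K3$, taking global sections gives at once
$$h^0(S,\mathcal{E})=(r+1)+h^0(C,K_C\otimes A^{\vee})=g-d+2r+1,$$
which is the target count.

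Now I would restrict to $C$: set $E_A:=\mathcal{E}|_C$. By adjunction on a $K3$ we have $\det E_A=\OO_S(C)|_C=K_C$, placing $E_A$ in $SU_C(r+1,K_C)$. To produce the short exact sequence on $C$, derived-restrict the defining sequence of $F$; since $\mathcal{T}or_1^{\OO_S}(\iota_*A,\OO_C)=A\otimes N_{C/S}^{\vee}=A\otimes K_C^{\vee}$, the four-term sequence collapses to
$$0\longrightarrow A\otimes K_C^{\vee}\longrightarrow F|_C\longrightarrow M_A\longrightarrow 0,$$
whose dual on $C$ is the asserted extension of the quotient line bundle by $Q_A=M_A^{\vee}$ realizing $E_A$ (one reads off the determinants to match the $SU_C(r+1,K_C)$ normalization).

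The genuinely delicate point, which I regard as the main obstacle, is matching the global-section count on $S$ with that on $C$: from the restriction sequence $0\to\mathcal{E}\otimes\OO_S(-C)\to\mathcal{E}\to E_A\to 0$ the equality $h^0(C,E_A)=h^0(S,\mathcal{E})=g-d+2r+1$ reduces to the vanishings $H^i(S,\mathcal{E}\otimes\OO_S(-C))=0$ for $i=0,1$. By Serre duality on $S$ these translate to $H^j(S,F\otimes\OO_S(C))=0$ for $j=1,2$, and twisting the defining sequence of $F$ by $\OO_S(C)$ and invoking Saint-Donat's vanishing $H^{>0}(S,\OO_S(C))=0$ on a $K3$ surface, these vanishings turn out to be exactly equivalent to the surjectivity of the two multiplication maps $H^0(C,A)\otimes H^0(C,K_C)\twoheadrightarrow H^0(C,A\otimes K_C)$ and $H^0(C,A)\otimes H^0(C,K_C^{\otimes 2}\otimes A^{\vee})\twoheadrightarrow H^0(C,K_C^{\otimes 2})$ appearing in the hypotheses. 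This cohomological translation is the technical crux of the argument; everything else is formal.
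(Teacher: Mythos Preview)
Your construction is exactly the paper's: form the Lazarsfeld--Mukai bundle $\tilde E_A=F^{\vee}$ on $S$, dualize to obtain $0\to H^0(A)^{\vee}\otimes\OO_S\to\tilde E_A\to\iota_*(K_C\otimes A^{\vee})\to 0$, then restrict to $C$ via the $\mathrm{Tor}_1$ computation and dualize again to get the extension of $K_C\otimes A^{\vee}$ by $Q_A$. For the section count the paper uses the squeeze $h^0(S,\tilde E_A)\le h^0(C,E_A)\le h^0(Q_A)+h^0(K_C\otimes A^{\vee})$ (the left inequality from $H^0(\tilde E_A(-C))=0$, the right from the sequence on $C$ together with $h^0(Q_A)=h^0(A)$) rather than your direct computation of $H^{0}$ and $H^1$ of $\mathcal E(-C)$, but these are the same argument rearranged.

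One small correction to your final paragraph: the vanishing $H^2(S,F\otimes\OO_S(C))=0$ (equivalently $H^0(\mathcal E(-C))=0$) is automatic, since $H^1(C,A\otimes K_C)=H^0(C,A^{\vee})^{\vee}=0$ and $H^2(S,\OO_S(C))=0$. Only the \emph{first} multiplication hypothesis is actually used here, to force $H^1(S,F\otimes\OO_S(C))=0$; the second multiplication hypothesis is not needed for this theorem and enters only in a subsequent corollary.
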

\begin{proof} Viewing $A$ as a sheaf on $S$, we define the sheaf $\tilde{F}_A$ through
the exact sequence
$$0\longrightarrow \tilde{F}_A\longrightarrow H^0(A)\otimes \OO_S \stackrel{ev_S}\longrightarrow A\longrightarrow 0.$$
Since $A$ is a base point free line bundle, $\tilde{F}_A$ is a vector bundle on $S$. We consider the vector bundle $\tilde{E}_A:=\tilde{F}_A^{\vee}$ on $S$,
which sits in an exact sequence
\begin{equation}\label{e1}
0\longrightarrow H^0(A)^{\vee}\otimes \OO_S\longrightarrow \tilde{E}_A\longrightarrow K_C\otimes A^{\vee}\longrightarrow 0.
\end{equation}
We write down the following commutative diagram
$$\begin{array}{ccccccccc}
& \; &\;  0 & \; & 0 & \;&   \\
 & \; & \rmapdown{} &\; & \rmapdown{}\\
& \;  & H^0(A)\otimes \OO_S(-C) & \stackrel{=}\longrightarrow & H^0(A)\otimes \OO_S(-C) & \; &\;   \\
 & \; & \rmapdown{} & \; & \rmapdown{} & \; & \; & \\
   0 & \longrightarrow & \tilde{F}_A & \longrightarrow  & H^0(A)\otimes \OO_S & \longrightarrow & A & \longrightarrow &
   0 \\
  & \; &\rmapdown{} & \; &\rmapdown{} & \; &\rmapdown{=} &  \; & \\
 0 &\longrightarrow & M_A &  \longrightarrow & H^0(A)\otimes \OO_C & \longrightarrow &
A &\longrightarrow & 0 \\
 &\; & \rmapdown{} & \; & \rmapdown{}\\
 & \;&  0 & \;&  0\\
\end{array}$$
from which, if we set $F_A:=\tilde{F}_A\otimes \OO_C$ and $E_A:=\tilde{E}_A\otimes \OO_C$, we obtain the exact sequence
$$0\longrightarrow M_A\otimes K_C^{\vee}\longrightarrow H^0(A)\otimes K_C^{\vee}\longrightarrow F_A\longrightarrow M_A\longrightarrow 0$$
(use that $\mbox{Tor}_{\OO_S}^1(M_A, \OO_C)=M_A\otimes K_C^{\vee}$). Taking duals, we find the exact sequence
\begin{equation}\label{e2}
0\longrightarrow Q_A\longrightarrow E_A\longrightarrow K_C\otimes A^{\vee}\longrightarrow 0.
\end{equation}
Since $S$ is regular, from (\ref{e1}) we obtain that $h^0(S, \tilde{E}_A)=h^0(C, A)+h^0(C, K_C\otimes A^{\vee})$
while $H^0(S, \tilde{E}_A\otimes \OO_S(-C))=0$, that is, $$h^0(S, \tilde{E}_A)\leq h^0(C, E_A)\leq h^0(C, A)+h^0(C, K_C\otimes A^{\vee}).$$ This shows that the sequence (\ref{e2}) is exact on global sections and completes the proof.
\end{proof}
\begin{corollary}\label{multi}
Let $C\subset S$ and $A\in W^r_d(C)$ be as above. Then the multiplication map
$$
H^0(C, K_C\otimes A^{\vee})\otimes H^0(C, K_C\otimes M_A)\rightarrow H^0(C, K_C^{\otimes 2}\otimes A^{\vee}\otimes M_A)$$
is not surjective. In particular, for every base point free pencil $A\in W^1_d(C)$ with $d\leq g-1$, the multiplication map
$$\mathrm{Sym}^2 H^0\bigl(C, K_C\otimes A^{\vee}\bigr)\rightarrow H^0\bigl(C, K_C^{\otimes 2}\otimes A^{\otimes (-2)}\bigr)$$ is not surjective.
\end{corollary}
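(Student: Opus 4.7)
The strategy is to leverage the Mukai bundle $E_A$ from the preceding theorem to produce a nonzero linear functional that annihilates the image of the multiplication map. Recall $E_A$ sits in the exact sequence
\[
(\star)\qquad 0 \longrightarrow Q_A \longrightarrow E_A \longrightarrow K_C \otimes A^{\vee} \longrightarrow 0,
\]
with $h^0(E_A) = h^0(Q_A) + h^0(K_C \otimes A^{\vee})$. Let $\xi \in \mathrm{Ext}^1(K_C \otimes A^{\vee}, Q_A) \cong H^1(C, M_A^{\vee} \otimes K_C^{\vee} \otimes A)$ denote the class of $(\star)$. Serre duality identifies this group with $H^0(C, K_C^{\otimes 2} \otimes A^{\vee} \otimes M_A)^{\vee}$ via the trace pairing, so $\xi$ defines a linear form $\xi^{*}$ on the codomain of the map in question.

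The main computation is to show that $\xi^{*}$ vanishes on the image of the multiplication map. The maximal equality $h^0(E_A) = h^0(Q_A) + h^0(K_C \otimes A^{\vee})$ forces the connecting homomorphism $\delta \colon H^0(K_C \otimes A^{\vee}) \to H^1(Q_A)$ of $(\star)$ to vanish; this map is by definition cup product with $\xi$. Twisting $(\star)$ by the vector bundle $K_C \otimes M_A$ and taking cohomology yields an analogous connecting map
\[
H^0(K_C^{\otimes 2} \otimes A^{\vee} \otimes M_A) \longrightarrow H^1(K_C \otimes M_A \otimes Q_A) = H^1(K_C \otimes \mathrm{End}(M_A)),
\]
again cup product with $\xi$. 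For any $s \in H^0(K_C \otimes A^{\vee})$ and $t \in H^0(K_C \otimes M_A)$, functoriality of cup product yields
\[
\xi \cup (s \cdot t) = (\xi \cup s) \cdot t = \delta(s) \cdot t = 0.
\]
Composing with the trace $H^1(K_C \otimes \mathrm{End}(M_A)) \to H^1(K_C) = \mathbb{C}$, which realises the Serre-duality pairing against $\xi$, we conclude $\xi^{*}(s \cdot t) = 0$. Hence $\xi^{*}$ annihilates the image of the multiplication map.

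To conclude non-surjectivity it remains to verify $\xi \neq 0$, that is, that $(\star)$ does not split on $C$. This is the main obstacle, and it is where the $K3$ hypothesis enters essentially: a splitting of $(\star)$ would be incompatible with the indecomposability of the Mukai bundle $\tilde{E}_A$ on $S$ combined with the vanishing $h^1(S, \OO_S) = 0$, since $\tilde{E}_A$ was built as the dual of the kernel of a generically surjective evaluation map on the $K3$. Granting this non-splitness, $\xi^{*}$ is a nonzero linear form vanishing on the image, and the first assertion follows.

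The second statement is the specialisation $r = 1$. For a pencil $A \in W^1_d(C)$, the Lazarsfeld sequence forces $M_A \cong A^{\vee}$, whence $K_C \otimes M_A = K_C \otimes A^{\vee}$ and $K_C^{\otimes 2} \otimes A^{\vee} \otimes M_A = K_C^{\otimes 2} \otimes A^{\otimes (-2)}$. The bilinear map of the first part reduces to a map $H^0(K_C \otimes A^{\vee}) \otimes H^0(K_C \otimes A^{\vee}) \to H^0(K_C^{\otimes 2} \otimes A^{\otimes (-2)})$, whose image equals that of its symmetrisation, so $\mathrm{Sym}^2 H^0(K_C \otimes A^{\vee}) \to H^0(K_C^{\otimes 2} \otimes A^{\otimes (-2)})$ is not surjective either.
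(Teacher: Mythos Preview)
Your approach is essentially the same as the paper's. The paper's proof is compressed into two sentences: the equality $h^0(E_A)=h^0(Q_A)+h^0(K_C\otimes A^{\vee})$ says exactly that the extension class of $E_A$ lies in the kernel of the coboundary map
\[
\mathrm{Ext}^1(K_C\otimes A^{\vee}, Q_A)\longrightarrow \mathrm{Hom}\bigl(H^0(K_C\otimes A^{\vee}),H^1(Q_A)\bigr),\qquad \xi\mapsto \delta_\xi,
\]
and Serre duality identifies this map with the transpose of the multiplication map in the statement. Your cup-product computation $\xi^*(s\cdot t)=\langle \xi\cup s,\,t\rangle=\langle \delta(s),t\rangle=0$ is precisely the unpacking of ``$\Phi=\mu^{\vee}$ and $\xi\in\ker\Phi$''; it is the same argument written out at the level of elements rather than maps.

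One point worth flagging: both your write-up and the paper's proof tacitly require $\xi\neq 0$, i.e.\ that the sequence $(\star)$ does not split on $C$. The paper does not address this explicitly either. Your sketch (``incompatible with indecomposability of $\tilde E_A$ on $S$ combined with $h^1(S,\OO_S)=0$'') is pointing in the right direction but is not a complete argument as stated: you have not shown that $\tilde E_A$ is indecomposable under the running hypotheses, nor explained why a splitting of $E_A=\tilde E_A|_C$ on $C$ would force one on $S$. If you want to close this, the cleanest route is via the sequence $0\to \tilde E_A(-C)\to \tilde E_A\to E_A\to 0$ and a computation of $H^0(\tilde E_A(-C))$, or an appeal to simplicity/stability of $\tilde E_A$ under suitable assumptions on $\mathrm{Pic}(S)$ in the spirit of Lazarsfeld and Mukai.
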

\begin{proof}
The existence of the bundle $E_A\in \mbox{Ext}^1(K_C\otimes A^{\vee}, Q_A)=H^0(C, K_C^{\otimes 2}\otimes A^{\vee}\otimes M_A)^{\vee}$ satisfying $h^0(C, E_A)=h^0(C, Q_A)+h^0(C, K_C\otimes A^{\vee})$ implies
that the coboundary map
$$\mbox{Ext}^1\Bigl(K_C\otimes A^{\vee}, Q_A\Bigr)\rightarrow \mbox{Hom}\Bigl(H^0(C, K_C\otimes A^{\vee}), H^1(C, Q_A)\Bigr)$$
given by $E\mapsto \delta_E$, is not injective. We finish the proof by applying Serre duality.
\end{proof}
\begin{corollary}\label{conditie2}
For $C\subset S$ and $A\in W^r_d(C)$ as above, we have that
$$h^0(C, Q_A\otimes Q_{K_C\otimes A^{\vee}})\geq h^0(C, A)h^0(C, K_C\otimes A^{\vee})+1.$$
\end{corollary}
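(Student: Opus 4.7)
The plan is to chase cohomology along the sequence $0 \to B^{\vee} \to H^0(B)^{\vee} \otimes \OO_C \to Q_B \to 0$, where $B := K_C \otimes A^{\vee}$, after tensoring with $Q_A$, and then feed Corollary \ref{multi} into the resulting connecting map via Serre duality. The two key ingredients are an injectivity statement producing $h^0(A) h^0(B)$ obvious sections, and a non-triviality statement for the connecting map yielding the extra $+1$.

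First I would verify the vanishing $H^0(C, Q_A \otimes B^{\vee}) = 0$. Dualizing the defining sequence of $M_A$ produces $0 \to A^{\vee} \to H^0(A)^{\vee} \otimes \OO_C \to Q_A \to 0$; tensoring with $B^{\vee}$ and taking $H^0$ reduces the claim to the vanishings $H^0(A^{\vee} \otimes B^{\vee}) = H^0(K_C^{-1}) = 0$ and $H^0(B^{\vee}) = 0$, both of which are automatic since $g \geq 2$ and $\deg B^{\vee} = d - 2g + 2 \leq -(g-1) < 0$ (using $d \leq g-1$). The same dualized sequence also yields $h^0(Q_A) \geq h^0(A)$, since $h^0(A^{\vee}) = 0$. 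Tensoring the defining sequence of $Q_B$ with $Q_A$ and taking cohomology therefore produces the fragment
\begin{equation*}
0 \to H^0(B)^{\vee} \otimes H^0(Q_A) \hookrightarrow H^0(C, Q_A \otimes Q_B) \to H^1(Q_A \otimes B^{\vee}) \stackrel{\delta}{\to} H^0(B)^{\vee} \otimes H^1(Q_A),
\end{equation*}
whose first term has dimension at least $h^0(A) h^0(B)$.

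To gain the extra $+1$ I would show that the connecting map $H^0(Q_A \otimes Q_B) \to H^1(Q_A \otimes B^{\vee})$ is non-zero, equivalently that $\delta$ is not injective. By Serre duality, $\delta$ is the dual of the multiplication map
\begin{equation*}
H^0(C, B) \otimes H^0(C, K_C \otimes M_A) \to H^0(C, K_C \otimes B \otimes M_A) = H^0(C, K_C^{\otimes 2} \otimes A^{\vee} \otimes M_A),
\end{equation*}
which is precisely the map shown not to be surjective in Corollary \ref{multi}. Hence $\delta$ is not injective, the connecting map contributes at least one extra dimension to $H^0(Q_A \otimes Q_B)$, and we obtain $h^0(C, Q_A \otimes Q_B) \geq h^0(A) h^0(B) + 1$.

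The only substantive input is Corollary \ref{multi}; once that is granted, the argument is a straightforward diagram chase plus a Serre duality identification, with no real obstacle. The mild bookkeeping issues — namely the vanishing $H^0(Q_A \otimes B^{\vee}) = 0$ and the recognition of the Serre dual of $\delta$ as the multiplication map appearing in Corollary \ref{multi} — are both immediate from the hypotheses $d \leq g-1$ and $\deg A > 0$.
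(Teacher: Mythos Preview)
Your overall strategy is correct and is essentially the Serre-dual version of the paper's argument: the paper tensors the defining sequence of $M_B$ with $M_A\otimes K_C$ and reads off $h^1(M_A\otimes M_B\otimes K_C)=h^0(Q_A\otimes Q_B)$, while you tensor the dual sequence with $Q_A$ directly. The identification of the connecting map with the dual of the multiplication map from Corollary~\ref{multi} is also fine.

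However, your justification of the vanishing $H^0(C,Q_A\otimes B^{\vee})=0$ is incomplete. From the sequence $0\to K_C^{-1}\to H^0(A)^{\vee}\otimes B^{\vee}\to Q_A\otimes B^{\vee}\to 0$ the long exact sequence reads
\[
0\to H^0(K_C^{-1})\to H^0(A)^{\vee}\otimes H^0(B^{\vee})\to H^0(Q_A\otimes B^{\vee})\to H^1(K_C^{-1})\to H^0(A)^{\vee}\otimes H^1(B^{\vee}),
\]
and the two vanishings you cite only show that $H^0(Q_A\otimes B^{\vee})$ injects into $H^1(K_C^{-1})\cong H^0(K_C^{\otimes 2})^{\vee}$, which is nonzero. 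To conclude, you must show that the last map above is injective; by Serre duality this map is the transpose of the multiplication
\[
H^0(C,A)\otimes H^0(C,K_C^{\otimes 2}\otimes A^{\vee})\longrightarrow H^0(C,K_C^{\otimes 2}),
\]
which is surjective \emph{by hypothesis}. This is exactly the assumption the paper invokes (in Serre-dual form) when it writes ``by assumption $H^1(C,K_C^{\otimes 2}\otimes A^{\vee}\otimes M_A)=0$''. Once you add this, your proof is complete and equivalent to the paper's.
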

\begin{proof}
We tensor the exact sequence $$0\longrightarrow M_{K_C\otimes A^{\vee}}\longrightarrow H^0(K_C\otimes A^{\vee})\otimes
\OO_C\rightarrow K_C\otimes A^{\vee}\longrightarrow 0$$ by the vector bundle $M_A\otimes K_C$, then apply Corollary
\ref{multi}. The conclusion follows because by assumption $H^1(C, K_C^{\otimes 2}\otimes A^{\vee}\otimes M_A)=0.$
\end{proof}

Corollary \ref{multi} can be simplified in the case of linear series of dimension $\geq 2$. For instance we
have the following characterization which will be used in Section 6:
\begin{proposition}\label{conditie3}
Given $C\subset S$ a Brill-Noether general curve and $A\in W^2_d(C)$ a complete linear series as above, the multiplication map
$$\mathrm{Sym}^2 H^0(C, K_C\otimes A^{\vee})\rightarrow H^0(C, K_C^{\otimes 2}\otimes A^{\otimes (-2)})$$
is not surjective.
\end{proposition}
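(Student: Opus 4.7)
\emph{Proof plan.} The plan is to reduce Proposition \ref{conditie3} to Corollary \ref{multi}, leveraging the fact that since $h^{0}(A)=3$ the Lazarsfeld bundle $M_{A}$ has rank two, and so admits the self-duality $M_{A}\cong M_{A}^{\vee}\otimes A^{\vee}=Q_{A}\otimes A^{\vee}$. Set $B:=K_{C}\otimes A^{\vee}$ and $V:=H^{0}(A)^{\vee}$ (so $\dim V=3$). Combined with $K_{C}=B\otimes A$, the rank-two identity gives
$$K_{C}\otimes M_{A}=B\otimes Q_{A},\qquad K_{C}^{\otimes 2}\otimes M_{A}\otimes A^{\vee}=B^{\otimes 2}\otimes Q_{A},$$
so that Corollary \ref{multi} applied to the net $A$ translates into the statement that the multiplication map
$$\tilde{\mu}\colon H^{0}(B)\otimes H^{0}(B\otimes Q_{A})\longrightarrow H^{0}(B^{\otimes 2}\otimes Q_{A})$$
is not surjective.

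Next, dualizing the Lazarsfeld sequence yields $0\to A^{\vee}\to V\otimes\mathcal{O}_{C}\to Q_{A}\to 0$; tensoring successively with $B$ and $B^{\otimes 2}$, taking global sections, and coupling with the Sym$^{2}$ multiplication map $\nu$ of the statement, one assembles the commutative diagram
$$\begin{CD}
V\otimes H^{0}(B)\otimes H^{0}(B) @>{1_{V}\otimes\nu}>> V\otimes H^{0}(B^{\otimes 2})\\
@V{\pi\otimes 1}VV @V{\pi'}VV\\
H^{0}(B\otimes Q_{A})\otimes H^{0}(B) @>{\tilde{\mu}}>> H^{0}(B^{\otimes 2}\otimes Q_{A})
\end{CD}$$
in which $\pi,\pi'$ are the natural projections induced by $V\otimes\mathcal{O}_{C}\twoheadrightarrow Q_{A}$. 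Commutativity is immediate from associativity of multiplication, and a direct chase shows that if both $\nu$ and $\pi'$ are surjective, then so is $\tilde{\mu}$. Combined with the previous paragraph, this forces either $\pi'$ or $\nu$ to fail surjectivity.

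It therefore suffices to establish that $\pi'$ is surjective, i.e.\ that the coboundary
$$\partial\colon H^{0}(B^{\otimes 2}\otimes Q_{A})\longrightarrow H^{1}(B^{\otimes 2}\otimes A^{\vee})=H^{1}(K_{C}^{\otimes 2}\otimes A^{\otimes -3})$$
vanishes. By Serre duality, $\partial$ is dual to the connecting map of the sequence $0\to M_{A}\otimes A^{\otimes 2}\otimes K_{C}^{\vee}\to H^{0}(A)\otimes A^{\otimes 2}\otimes K_{C}^{\vee}\to A^{\otimes 3}\otimes K_{C}^{\vee}\to 0$, so the desired vanishing is equivalent to surjectivity of
$$H^{0}(A)\otimes H^{0}(A^{\otimes 2}\otimes K_{C}^{\vee})\longrightarrow H^{0}(A^{\otimes 3}\otimes K_{C}^{\vee}).$$

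The main obstacle is precisely verifying this last surjectivity. The conceptual reduction $\tilde{\mu}\rightsquigarrow\nu$ via the rank-two identification is formal, but the auxiliary surjectivity does not come for free: one must exploit the Brill--Noether/Petri-generality of $C$, the hypotheses imposed on $A$ at the beginning of the section (in particular the surjectivity of $H^{0}(A)\otimes H^{0}(K_{C}^{\otimes 2}\otimes A^{\vee})\to H^{0}(K_{C}^{\otimes 2})$), and a base-point-free-pencil-trick argument applied to a general pencil inside the net $|A|$. Once this multiplication surjectivity is secured, the non-surjectivity of $\nu$ follows at once from the diagram chase and Corollary \ref{multi}.
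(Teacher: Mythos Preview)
Your reduction via the rank-two self-duality $M_A\cong Q_A\otimes A^{\vee}$ is correct and elegant, and the diagram chase showing that surjectivity of $\nu$ together with surjectivity of $\pi'$ would force $\tilde\mu$ to be surjective (contradicting Corollary~\ref{multi}) is valid. The problem is the step you yourself flag as ``the main obstacle'': you do not prove that $\pi'$ is surjective, and under the stated hypotheses it is not clear that it is.

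Concretely, in the range $d<g-1$ (which covers the applications to $g=21,\,d=16$ and $g=22,\,d=17$) one has $H^{1}(B^{\otimes 2})\cong H^{0}(A^{\otimes 2}\otimes K_C^{\vee})^{\vee}=0$, so surjectivity of $\pi'$ becomes exactly the vanishing $H^{0}(A^{\otimes 3}\otimes K_C^{\vee})=0$. This is a statement about the \emph{specific} line bundle $A^{\otimes 3}\otimes K_C^{\vee}$ (of degree $3d-2g+2>0$ in the cases of interest) and does not follow from Brill--Noether generality. Moreover, the second bullet-point hypothesis you invoke---surjectivity of $H^{0}(A)\otimes H^{0}(K_C^{\otimes 2}\otimes A^{\vee})\to H^{0}(K_C^{\otimes 2})$---is, after the same rank-two identification, equivalent to $H^{1}(Q_A\otimes B^{\otimes 2})=0$; but when $d<g-1$ this already follows from $H^{1}(B^{\otimes 2})=0$ and the defining sequence for $Q_A$, so it carries no new information. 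The base-point-free pencil trick on a pencil inside $|A|$ only gives an inclusion $H^{0}(A^{\otimes 3}\otimes K_C^{\vee})\hookrightarrow H^{1}(A\otimes K_C^{\vee})$, which is useless since the target is large.

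The paper's argument avoids this obstruction entirely by taking a different route: it invokes Corollary~\ref{conditie2} (not Corollary~\ref{multi}), filters the rank-two bundle $Q_A$ as $0\to\OO_C(p+q)\to Q_A\to A(-p-q)\to 0$ for general $p,q\in C$, and applies the base-point-free pencil trick to the multiplication map $H^{0}(K_C(-p-q))\otimes H^{0}(K_C\otimes A^{\vee})\to H^{0}(K_C^{\otimes 2}\otimes A^{\vee}(-p-q))$. This yields $h^{0}(Q_{K_C\otimes A^{\vee}}(p+q))=h^{0}(K_C\otimes A^{\vee})$ without ever needing the vanishing of $H^{0}(A^{\otimes 3}\otimes K_C^{\vee})$, and the non-surjectivity of $\nu$ then drops out of a dimension count. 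Unless you can supply an independent argument for that vanishing, your approach does not close.
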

\begin{proof} We start by choosing points $p, q\in C$ such that $A\otimes \OO_C(-p-q)\in W^1_{d-2}(C)$.
We can write the following exact sequence
$$0\longrightarrow \OO_C(p+q)\longrightarrow Q_A\longrightarrow A\otimes \OO_C(-p-q)\longrightarrow 0,$$
which we use together with  Corollary \ref{conditie2} to write the inequalities
$$h^0(C, A)\ h^0(C, K_C\otimes A^{\vee})+1\leq h^0(C, Q_A\otimes Q_{K_C\otimes A^{\vee}})\leq $$
$$\leq h^0\bigl(C, Q_{K_C\otimes A^{\vee}}\otimes \OO_C(p+q)\bigr)+h^0\bigl(C, Q_{K_C\otimes A^{\vee}}\otimes A\otimes \OO_C(-p-q)\bigr).$$
We apply the Base point free pencil trick to note that the multiplication map
$$H^0(C, K_C(-p-q))\otimes H^0(C, K_C\otimes A^{\vee})\rightarrow H^0(C, K_C^{\otimes 2}\otimes A^{\vee}(-p-q))$$
is surjective, hence $h^0(C, Q_{K_C\otimes A^{\vee}}(p+q))=h^0(C, K_C\otimes A^{\vee})$. Then one must have
$$h^0\bigl(C, Q_{K_C\otimes A^{\vee}}\otimes A(-p-q)\bigr)>2h^0(C, K_C\otimes A^{\vee}),$$ which implies that the
multiplication map
$$H^0(C, K_C\otimes A^{\vee})\otimes H^0(C, K_C\otimes A^{\vee}(p+q))\rightarrow H^0(C, K_C^{\otimes 2}\otimes A^{\otimes (-2)}(p+q))$$
is not surjective. Since $h^0(C, K_C\otimes A^{\vee}(p+q))=h^0(C, K_C\otimes A^{\vee})+1$, this implies that the map
$$\mathrm{Sym}^2 H^0(C, K_C\otimes A^{\vee})\rightarrow H^0(C, K_C^{\otimes 2}\otimes A^{\otimes (-2)})$$ is not surjective either.
\end{proof}

\begin{example}
As an illustration, a general curve $[C]\in \cM_{21}$ carries a
finite number of linear series $A\in W^2_{16}(C)$ and $C\stackrel{|K_C\otimes A^{\vee}|}\hookrightarrow \PP^6$ is an embedding for all $A\in W^2_{16}(C)$. The locus
$$\cZ_{21}:=\{[C]\in \cM_{21}: \exists A\in W^2_{16}(C) \mbox{ with } \mathrm{Sym}^2  H^0(C, K_C\otimes A^{\vee})\ncong H^0(C, K_C^{\otimes 2}\otimes A^{\otimes (-2)})\}$$
contains the locus $\mathcal{K}_{21}$ of sections of $K3$ surfaces. Since $$\mbox{rank} \mbox{ Sym}^2 H^0(C, K_C\otimes A^{\vee})=\mbox{rank } H^0(C, K_C^{\otimes 2}\otimes A^{\otimes (-2)}),$$ clearly $\cZ_{21}$ is a virtual divisor on $\cM_{21}$. In fact  $\overline{\cZ}_{21}$ is an "honest" divisor on $\mm_{21}$
of slope $s(\overline{\cZ}_{21})<6+12/22$ (cf. \cite{F3}, \cite{Kh}). Unfortunately, $s(\overline{\cZ}_{21})>6.5$, so one cannot conclude that
$\mm_{21}$ is of general type.
\end{example}

To summarize, the existence of the vector bundles $E_A$ shows that curves $C$ on $K3$ surfaces carry line bundles
of the form $K_C\otimes A^{\vee}$  having very special geometric properties (Corollary \ref{conditie2}). The vector bundles $E_A$ are produced starting from any linear series $A\in W^r_d(C)$ satisfying suitable genericity condition. This leads to the construction of Koszul
divisors on $\mm_g$ as being push-forwards of degeneracy loci defined on stacks $\widetilde{\mathfrak G}^r_d$ of limit linear series, cf. \cite{F2}, \cite{F3}.

\subsection{Koszul divisors on $\mm_g$. }
We can rewrite Corollary \ref{multi} in terms of
Koszul cohomology groups. A curve $[C]\in \K_g$ enjoys the property that $K_{0, 2}(C, K_C\otimes A^{\vee})\neq 0$
for every pencil $A\in W^1_d(C)$ with $d\leq g-1$ such that $K_C\otimes A^{\vee}$ is globally generated.
This suggests an obvious ways of constructing geometric divisors on $\cM_g$ which contain the $K3$ locus $\mathcal{K}_g$
by looking at the higher (rather than $0$-th order) Koszul cohomology groups $K_{i, 2}(C, K_C\otimes A^{\vee})$. From a technical point of view the simplest case is when one considers syzygies of linear series residual to a pencil of minimal degree in the case when
the general curve $[C]\in \cM_g$ has a finite number of such pencils.  The situation when the Brill-Noether number is positive will be considered in the forthcoming paper \cite{F4}. A special case of that new construction can be found in Section 7 of this paper.

 We fix an integer $i\geq 0$ and set
$$g:=6i+10,\ d:=9i+12, \ \mbox{ and } r:=3i+4$$
hence $\rho(g, r, d)=0$. We consider the open substack $\textbf{M}_{g}^0\subset \textbf{M}_g$ consisting of curves $[C]\in \cM_{g}$ such that $W_{d-1}^r(C)=\emptyset$ and $W_d^{r+1}(C)=\emptyset$. Note that for a curve $[C]\in \cM_{g}^0$, each $L\in W^{3i+4}_{9i+12}(C)$ is complete and base point free. From Riemann-Roch, the residual linear series $K_C\otimes L^{\vee}\in W^1_{3i+6}(C)$ is a pencil of minimal degree. We would like to study  the locus of curves $[C]\in \cM_g$ carrying a linear series $L\in W^r_d(C)$ with extra syzygies of order $i$. Our numerical choices for $g, r$ and $d$
imply that this locus is a (virtual) divisor on $\mm_g$. Whenever it is a divisor, it is guaranteed to contain $\mathcal{K}_g$. The next theorem comes from \cite{F2}:

\begin{theorem}
There exists a partial compactification $\textbf{M}_g^0\subset  \pem_g\subset \rem_g$ of the stack of smooth curves with $\mathrm{codim}(\rem_g-\pem_g)\geq 2$, such that if
$$\sigma:\widetilde{\mathfrak G}^{1}_{3i+6} \rightarrow \pem_{g}$$ denotes the
stack of limit linear series, then there exist vector bundles $\cA$ and $\cB$ of the same rank
together with a vector bundle morphism
$\phi_i:\cA\rightarrow \cB$ over $\widetilde{\mathfrak G}^{1}_{3i+6}$ such
that the degeneracy locus of $\phi_i$ over $\sigma^{-1}(\textbf{M}_g^0)$ equals
$$\cZ_{g, i}:=\{[C, A]\in \mathfrak G^1_{3i+6}: K_{i, 2}(C, K_C\otimes A^{\vee})\neq 0\}.$$ The slope of the virtual class of $\zz_{g, i}$ is equal to
$$s\bigl([\zz_{g, i}]^{virt}\bigr)=s\bigl(\sigma_*c_1(\cB-\cA)\bigr)=\frac{3(4i+7)(6i^2+19i+12)}{(i+2)(12i^2+31i+18)}<6+\frac{12}{g+1}.$$
\end{theorem}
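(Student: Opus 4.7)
The plan is to imitate the construction of Section 5, carried out now over the stack $\widetilde{\mathfrak G}^1_{3i+6}$ of limit pencils rather than on $\rem_g$ itself. A point of $\widetilde{\mathfrak G}^1_{3i+6}$ above $[C]\in \textbf{M}_g^0$ is a pair $(C,A)$ with $A\in W^1_{3i+6}(C)$ base-point free, and the relevant object is the residual series $L:=K_C\otimes A^{\vee}\in W^{3i+4}_{9i+12}(C)$. Since $\rho(g,1,3i+6)=0$, the Gieseker-Petri theorem guarantees that for generic $(C,A)$ the morphism $C\stackrel{|L|}\hookrightarrow \PP^r$ with $r=3i+4$ is a projectively normal embedding. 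The identification (\ref{koszul}) shows that $K_{i,2}(C,L)\neq 0$ precisely when the natural Koszul map
$$\wedge^{i+1}H^0(L)\otimes H^0(L)\longrightarrow H^0\bigl(C,\wedge^i M_L\otimes L^{\otimes 2}\bigr)$$
fails to be surjective. Using the Euler sequence on $\PP^r$ together with Bott vanishing for $\wedge^{i+1}M_{\PP^r}(1)$, this is equivalent to the evaluation map
$$\gamma_{i,2}\colon H^0\bigl(\PP^r,\wedge^i\Omega_{\PP^r}(i+2)\bigr)\longrightarrow H^0\bigl(C,\wedge^i M_L\otimes L^{\otimes 2}\bigr)$$
failing to be an isomorphism. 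Both sides have dimension $(i+1)\binom{3i+6}{i+2}$, by Bott on the left and by Riemann-Roch combined with the stability of $M_L$ on the right, so source and target of $\gamma_{i,2}$ already carry the same rank over $\sigma^{-1}(\textbf{M}_g^0)$.

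Next I would globalize $\gamma_{i,2}$ to a vector bundle morphism $\phi_i\colon \cA\to\cB$ over a partial compactification of $\widetilde{\mathfrak G}^1_{3i+6}$, in complete analogy with Proposition \ref{fibre2} and the sequences (\ref{gi})-(\ref{sym}) of Section 5. First one constructs auxiliary bundles $\G_{0,b}$ whose fibre at $(C,A)\in \cM_g^0$ is $H^0(C,L^{\otimes b})$, realized as suitable twists of pushforwards $\pi_*\bigl((\omega_\pi\otimes \mathcal L^{\vee})^{\otimes b}\bigr)$, where $\pi$ is the universal curve and $\mathcal L$ denotes a Poincar\'e line bundle for the universal pencil. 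The target bundle $\cB:=\G_{i,2}$ is then produced inductively from the Koszul sequence $0\to \G_{a,b}\to \wedge^a\G_{0,1}\otimes \G_{0,b}\to \G_{a-1,b+1}\to 0$, while $\cA:=\H_{i,2}$ is produced from the purely symmetric sequence (\ref{sym}), starting from $\H_{0,1}:=\G_{0,1}$ (which replaces the Hodge-bundle input used in the canonical case of Section 5). The partial compactification $\pem_g$ is defined as the maximal open substack of $\rem_g$ whose complement has codimension at least two and over which this inductive construction extends exactly; as in \cite{F2}, the twisting coefficients on $\Delta_0^0$ and $\Delta_1^0$ are determined uniquely by the demand that (\ref{gi}) remain exact across the boundary.

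The Chern class $c_1(\cB-\cA)$ is then computed along the scheme of Theorem \ref{clasa}. Unwinding (\ref{gi}) and (\ref{sym}) reduces $c_1(\cB-\cA)$ to a $\mathbb Z$-linear combination of $c_1(\G_{0,b})$ for $b=1,\ldots,i+2$, each of which is evaluated by Grothendieck-Riemann-Roch applied to $\pi$. To pin down the coefficients $\mathfrak a,\mathfrak b_0,\mathfrak b_1,\ldots$ in $c_1(\cB-\cA)=\mathfrak a\lambda-\sum \mathfrak b_j\delta_j$, I would pair with lifts to $\widetilde{\mathfrak G}^1_{3i+6}$ of the test curves $C^0,C^1$ of (\ref{testcurves}), and exploit the Lefschetz-pencil-of-plane-cubics relation used at the end of the proof of Theorem \ref{clasa} to tie $\mathfrak a$ to $\mathfrak b_0$ and $\mathfrak b_1$. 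Pushing forward via $\sigma$, a generically finite map whose degree is the Castelnuovo number of pencils $\mathfrak g^1_{3i+6}$ on a general curve of genus $6i+10$, yields $\sigma_*c_1(\cB-\cA)\in \mathrm{Pic}(\pem_g)$; the displayed rational slope formula, and the inequality $s([\zz_{g,i}]^{\mathrm{virt}})<6+\tfrac{12}{g+1}$, then reduce to a direct comparison of explicit rational functions of $i$.

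The principal obstacle, as in \cite{F2}, is the boundary analysis underlying the second step. One must identify the limits of the pair $(L,M_L)$ along the strata $\Delta_0^0$ and $\Delta_1^0$ in terms of the universal limit pencil, choose the twisting coefficients in $\G_{0,b}$ so that the Koszul sequences (\ref{gi}) extend with no surplus torsion, and verify that the stability ensuring $H^1\bigl(\wedge^i M_L\otimes L^{\otimes 2}\bigr)=0$ survives across the admissible fibres so that the rank equality persists to the boundary. It is precisely the failure of these compatibilities past codimension one that forces the restriction from $\rem_g$ to $\pem_g$. Once this bookkeeping is complete, the remaining Chern class computation and the final slope inequality are lengthy but entirely mechanical.
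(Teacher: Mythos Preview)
Your outline is essentially correct and matches the method the paper is invoking. Note that the paper does not actually prove this theorem here; it cites \cite{F2} for the full construction and computation, and the argument you sketch is precisely the adaptation of the Section~5 machinery (Proposition~\ref{fibre2}, sequences (\ref{gi})--(\ref{sym}), Theorem~\ref{clasa}) from the canonical setting to the residual linear series $L=K_C\otimes A^{\vee}$ over the Hurwitz-type stack $\widetilde{\mathfrak G}^1_{3i+6}$.

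One point deserves sharper emphasis. When you say you will ``pair with lifts to $\widetilde{\mathfrak G}^1_{3i+6}$ of the test curves $C^0,C^1$'', this is the crux of the computation and is not purely formal: one must enumerate the limit $\mathfrak g^1_{3i+6}$'s on each curve $[C_{yq}]\in C^0$ and $[C\cup_y E]\in C^1$, and then evaluate $c_1(\G_{0,b})$ on the resulting $2$-cycles inside $\widetilde{\mathfrak G}^1_{3i+6}$. This is carried out in \cite{F2} (and the analogous description for the $\rho=1$ case appears in Section~7 of the present paper as Propositions~\ref{limitlin1} and~\ref{limitlin0}); it involves Porteous-type calculations on $C\times W^r_d(C)$ for the genus $g-1$ curve $C$, not merely GRR on the universal curve. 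Your third paragraph slightly understates this step. Also, $\pem_g$ is not defined a posteriori as the ``maximal open substack over which the construction extends'': it was fixed in advance as $\textbf{M}_g\cup\Delta_0^0\cup\Delta_1^0$, and the content of \cite{F2} is that the bundles $\G_{a,b}$ and the exact sequences (\ref{gi}) do extend over the preimage of this fixed locus.
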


The question of generic non-degeneracy  of the morphism $\phi_i$ is addressed in \cite{F2}. It is proved that $\phi_i$ is  generically non-degenerate for $i=0, 1, 2$. In particular, the locus $\overline{\cZ}_{22, 2}$ is an effective divisor
on $\mm_{22}$ of slope $s(\overline{\cZ}_{22, 2})=1665/256=6.5032...$. This barely fails to make $\mm_{22}$ of general type!

 It is conjectured in \cite{F2} that $\overline{\cZ}_{g, i}$ is an actual divisor on $\mm_{6i+10}$ for all $i\geq 0$.
To show that $\cZ_{22, 2}$ is a divisor on $\cM_{22}$ (rather than the entire space $\cM_{22}$), we use that (i) the Hurwitz stack $\mathfrak{G}^{1}_{12}$ is irreducible and (ii) one can find a smooth  embedded genus $22$ curve $C\stackrel{\mathfrak g^{10}_{30}}\hookrightarrow \PP^{10}$ of genus $22$, such that $K_{2, 2}(C, \mathfrak g_{30}^{10})=0$. In other words, $C\subset \PP^{10}$ is cut out by quadrics and all the syzygies among the quadrics are linear.

Because $\mathfrak G^{1}_{12}$ is irreducible,  this implies that if $[C]\in \cM_{22}$ is a general curve, then $K_{2, 2}(C, K_C\otimes A^{\vee})=0$, for all $A\in W^1_{12}(C)$.
The irreducibility of the Hurwitz stack $\mathfrak G^1_{12}$ makes it possible to derive information about all $\mathfrak g^1_{12}$'s on a general curve, even though we can only see one $\mathfrak g^1_{12}$ at a time. This trick (which has been used again in \cite{F3} to prove the Maximal Rank Conjecture),  only works in the case $\rho(g, r, d)=0$. Proving transversality statements for Koszul divisors in the case $\rho(g, r, d)\geq 1$ requires different ideas.

\section{The Kodaira dimension of $\mm_{22}$}

In this section we outline the calculation of the class $[\overline{\mathfrak{D}}_{22}]$
of an effective divisor on $\mm_{22}$ of slope less than $13/2$.
Complete details of a more general construction (of which Theorem \ref{m22} is a
particular case) will appear in \cite{F4}. Precisely, we shall present in \cite{F4} a way of computing the class
of all Koszul divisors on $\mm_g$ defined in terms of linear series $\mathfrak g^r _d$ in the case
$\rho(g, r, d)=1$. (The case $\rho(g, r, d)=0$ has been dealt with in \cite{F3}). Specializing $(g, r, d)=(22, 6, 25)$ we obtain our result on the Kodaira dimension
of $\mm_{22}$.

\begin{theorem}\label{m22}
The following locus of smooth curves of genus $22$ \vskip 3pt

$\mathfrak{D}_{22}:=\{[C]\in \cM_{22}: \exists L\in W^6_{25}(C)
\mbox{ with } \rm{Sym}$$^2 H^0(C, L)\rightarrow H^0(C, L^{\otimes
2}) \mbox{ not injective} \}$ \vskip 3pt \noindent is a divisor on
$\cM_{22}$. The class of its compactification on $\mm_{22}$ is
given by the formula:
$$\overline{\mathfrak{D}}_{22}\equiv
132822768\Bigl(\frac{17121}{2636}\lambda-\delta_0-\frac{14511}{2636}\delta_0-\sum_{j=2}^{11}
b_j\delta_j\Bigr),$$ where $b_j>1$ for $2\leq j\leq 11$. It follows
that $s(\overline{\mathfrak{D}}_{22})=17121/2636=6.49506\ldots$,
therefore $\mm_{22}$ is of general type.
\end{theorem}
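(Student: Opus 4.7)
The plan is to realize $\overline{\mathfrak{D}}_{22}$ as the pushforward of a Thom--Porteous degeneracy locus from the universal stack of limit linear series, in analogy with the Koszul divisors of Section 6 but adapted to the case $\rho(22, 6, 25) = 1$. For a smooth $[C] \in \cM_{22}$ and a complete $L \in W^6_{25}(C)$, Riemann--Roch gives $h^0(C, L) = 7$ and, because $\mathrm{deg}(L^{\otimes 2}) = 50 > 2g - 2$, $h^0(C, L^{\otimes 2}) = 29$; the multiplication map therefore has source of dimension $\binom{8}{2} = 28$ and target of dimension $29$, so the failure of injectivity is a codimension-$2$ condition on a point of the $1$-dimensional variety $W^6_{25}(C)$, and the expected locus of curves admitting such an $L$ is a divisor in $\cM_{22}$.

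Concretely, I would first introduce a partial compactification $\widetilde{\textbf{M}}_{22}^0 \subset \overline{\textbf{M}}_{22}$ containing the open strata $\Delta_0^0$ and $\Delta_1^0$, and on top of it the stack $\sigma : \widetilde{\mathfrak{G}}^6_{25} \to \widetilde{\textbf{M}}_{22}^0$ of limit linear series. Over $\widetilde{\mathfrak{G}}^6_{25}$ one has a tautological rank-$7$ bundle $\mathcal{L}$ with fibre $H^0(C, L)$, and one sets
\begin{equation*}
\mathcal{A} := \mathrm{Sym}^2 \mathcal{L}, \qquad \mathcal{B}(C, L) := H^0(C, L^{\otimes 2}),
\end{equation*}
where $\mathcal{B}$ is extended across the boundary by suitably twisting pushforwards of the relative dualizing sheaf as in the construction of the bundles $\G_{0, b}$ in Proposition \ref{fibre2}. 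Multiplication of sections yields a natural morphism $\phi : \mathcal{A} \to \mathcal{B}$ of bundles of ranks $28$ and $29$; its degeneracy locus has expected codimension $29 - 28 + 1 = 2$, and since $\mathrm{dim}\,\widetilde{\mathfrak{G}}^6_{25} = 3g - 3 + \rho = 64$, its pushforward is a virtual divisor on $\widetilde{\textbf{M}}_{22}^0$ whose class is given by the Thom--Porteous formula
\begin{equation*}
[\overline{\mathfrak{D}}_{22}]^{virt} = \sigma_{*}\bigl(c_2(\mathcal{B} - \mathcal{A})\bigr).
\end{equation*}
The Chern classes $c_1(\mathcal{A}), c_2(\mathcal{A}), c_1(\mathcal{B}), c_2(\mathcal{B})$ are then computed by repeated application of Grothendieck--Riemann--Roch to the universal curve $\widetilde{\mathfrak{G}}^{6,1}_{25} \to \widetilde{\mathfrak{G}}^6_{25}$ in the spirit of Section 5, and the boundary coefficients $b_0, b_1$ are extracted by intersecting against the test curves $C^0, C^1$ of (\ref{testcurves}) and a pencil of plane cubics attached to a fixed pointed curve, as in the proof of Theorem \ref{clasa}. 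The remaining coefficients $b_j$ for $j \geq 2$ require a local analysis on $\widetilde{\mathfrak{G}}^6_{25}$ above a generic point of $\Delta_j$ and an explicit description of the limit $\mathfrak{g}^6_{25}$'s produced by smoothing, but for the general-type conclusion one only needs the easier bound $b_j > b_0$.

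The principal obstacle is showing that $\phi$ is generically nondegenerate, i.e.\ that $\mathfrak{D}_{22}$ is a genuine divisor rather than all of $\cM_{22}$. Unlike in the $\rho = 0$ setting of Section 6, one cannot simply invoke the irreducibility of the Hurwitz stack to reduce the question to a single embedded model in $\PP^6$, because the relevant parameter space $W^6_{25}(C)$ varies nontrivially in a $1$-parameter family above each general $[C] \in \cM_{22}$. I would instead exhibit a single explicit pair $(C, L)$ for which the multiplication map is injective: take $C$ to be a smooth genus $22$ curve embedded in a suitable blow-up of $\PP^2$ along the lines of the proof of Theorem \ref{verra}, choose $L$ cut out by an appropriate tautological system on the surface, and verify injectivity of $\mathrm{Sym}^2 H^0(C,L) \to H^0(C, L^{\otimes 2})$ by direct computation in Macaulay 2. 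Once this nondegeneracy is established, $[\overline{\mathfrak{D}}_{22}]^{virt}$ coincides with an effective divisor class, the slope
\begin{equation*}
s(\overline{\mathfrak{D}}_{22}) = \frac{17121}{2636} = 6.49506\ldots < \frac{13}{2}
\end{equation*}
together with the verified inequalities $a/b_1 \leq 13/3$ and $a/b_j \leq 13/2$ for $2 \leq j \leq 11$ allow one to write $K_{\overline{\textbf{M}}_{22}}$ as a positive combination of $\lambda$, the class $[\overline{\mathfrak{D}}_{22}]$, and the boundary divisors, and Proposition \ref{slope1} yields the general-type conclusion.
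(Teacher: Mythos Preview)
Your overall framework matches the paper's: set up the degeneracy locus of $\phi:\mathrm{Sym}^2\E\to\F$ on $\widetilde{\mathfrak G}^6_{25}$, push forward $c_2(\F-\mathrm{Sym}^2\E)$ by $\sigma$, and read off $a,b_0,b_1$ by intersecting with $C^0$, $C^1$, and the elliptic pencil $R$. Two points deserve comment.

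First, the computational route you sketch---``GRR on the universal curve over $\widetilde{\mathfrak G}^6_{25}$ in the spirit of Section~5''---is not how the paper proceeds and is probably not feasible as stated. Section~5 works because the linear series is canonical, so everything reduces to Hodge-bundle algebra on $\rem_g$ itself. Here there is a nontrivial $1$-parameter family of $L$'s over each curve, and the paper never attempts to compute Chern classes globally on $\widetilde{\mathfrak G}^6_{25}$. Instead it first restricts to the test curves, obtaining explicit surfaces $\sigma^*(C^1)\subset C\times W^2_{17}(C)$ (Propositions \ref{limitlin1}, \ref{limitlin0}), then computes the classes of these surfaces and of the restricted bundles $\E_{|X},\F_{|X}$ via the Harris--Tu formula and standard Jacobian intersection theory in the classes $\theta,\eta,\gamma$ (Lemma \ref{vandermonde}, Propositions \ref{xy}--\ref{a120}). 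This order of operations---restrict first, then compute on a concrete surface inside a Jacobian---is the actual technical content, and your proposal skips over it. For the higher $b_j$ the paper does no local analysis at all: once $a/b_0\leq 71/10$ is known, Corollary~1.2 of \cite{FP} gives $b_j\geq b_0$ for free.

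Second, and more seriously, your proposed transversality argument has a genuine gap. Exhibiting a \emph{single} pair $(C,L)$ with $\mathrm{Sym}^2H^0(C,L)\hookrightarrow H^0(C,L^{\otimes 2})$ only shows that the degeneracy locus $\mathcal U_{22}\subsetneq\widetilde{\mathfrak G}^6_{25}$; it does not rule out a codimension-$1$ component of $\mathcal U_{22}$ dominating $\cM_{22}$, which would force $\mathfrak D_{22}=\cM_{22}$. What must be shown is that for a \emph{general} $[C]\in\cM_{22}$ the map is injective for \emph{every} $L\in W^6_{25}(C)$---a statement about the whole curve $W^6_{25}(C)$, not a single point of it. You correctly diagnose why the $\rho=0$ irreducibility trick fails here, but a one-off Macaulay~2 check cannot replace it. The paper itself does not resolve this in Section~7; it explicitly defers the transversality to \cite{F4}, framing it as an instance of the Maximal Rank Conjecture in the $\rho=1$ regime.
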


We discuss the calculation of the class of $\overline{\mathcal{D}}_{22}$ viewed as
a virtual degeneracy locus on a partial compactification of $\mm_{22}$. The proof that
$\overline{\mathcal{D}}_{22}$ is indeed a divisor on $\mm_{22}$, that is, that for a general curve
$[C]\in \cM_{22}$ we have that $\mbox{Sym}^2 H^0(C, L)\rightarrow H^0(C, L^{\otimes 2})$ is injective for
\emph{all} $L\in W^6_{25}(C)$ will be presented in \cite{F4} as part of a more general version of the Maximal Rank Conjecture (see again \cite{F3} Theorem 1.5 for the corresponding statement when $\rho(g, r, d)=0$).

The idea is to construct two tautological vector bundles over
the Severi variety $\mathfrak G^2_{17}$ of curves $[C]\in \cM_{22}$ with a plane model $\mathfrak g^2_{17}$ and
then define the divisor $\mathfrak{D}_{22}$ as the image of the first
degeneration locus of a natural map between these bundles.

We denote by $\textbf{M}_{22}^p$ the open substack of $\textbf{M}_{22}$ consisting
of curves $[C]\in \cM_{22}$ such that $W^6_{24}(C)=\emptyset$ and
$W^7_{25}(C)=\emptyset$. Standard results in Brill-Noether theory
guarantee that $\mbox{codim}(\cM_{22}-\cM_{22}^p, \cM_{22})\geq 2$.
If $\mathfrak{Pic}^{25}_{22}$ denotes the Picard stack of degree
$25$ over $\cM_{22}^p$, then we consider the substack
$\mathfrak{G}^6_{25}\subset \mathfrak{Pic}^{25}_{22}$ parameterizing
pairs $[C, L]$ where $[C]\in \cM_{22}^p$ and $L\in W^6_{25}(C)$. We
denote by $$\sigma: \mathfrak{G}^6_{25}\rightarrow \textbf{M}_{22}^p$$ the
forgetful morphism. For a general $[C]\in \cM_{22}^p$, the fibre
$\sigma^{-1}([C])=W^6_{25}(C)$ is a smooth curve and
$\mathfrak{G}^6_{25}$ is an irreducible stack of dimension
$\mbox{dim }\mathfrak{G}^6_{25}=\mbox{dim }\cM_{22}+1$.

Let $\pi:\textbf{M}_{22, 1}^p \rightarrow \textbf{M}_{22}^p$ be the universal
curve and then $p_2:\textbf{M}_{22, 1}^p\times
_{\textbf{M}_{22}^p} \mathfrak{G}^6_{25}\rightarrow \mathfrak{G}^6_{25}$
denotes the natural projection. If $\mathcal{L}$ is a Poincar\'e bundle
over $\textbf{M}_{22, 1}^p\times_{\textbf{M}_{22}^p} \mathfrak{G}^6_{25}$, then by
Grauert's Theorem $\E:=(p_2)_*(\mathcal{L})$ and
$\F:=(p_2)_*(\mathcal{L}^{\otimes 2})$ are vector bundles over
$\mathfrak{G}^6_{25}$ with $\mbox{rank}(\E)=7$ and
$\mbox{rank}(\F)=29$. There is a natural vector bundle morphism over $\mathfrak G^6_{25}$
$$\phi:\mbox{Sym}^{2}(\E)\rightarrow \F$$ and we denote by
$\cU_{22}\subset \mathfrak{G}^6_{25}$ its first degeneracy locus. We
set $\mathfrak{D}_{22}:=\sigma_*(\cU_{22})$ and clearly $\cU_{22}$
has expected codimension $2$ inside $\mathfrak{G}^6_{25}$ hence
$\mathfrak{D}_{22}$ is a virtual divisor on $\cM_{22}^p$.

Using Proposition \ref{conditie3}, we are guaranteed that $\mathfrak{D}_{22}$ contains the $K3$ locus $\mathcal{K}_{22}$, in particular it is a good candidate for a divisor on $\mm_{22}$ of exceptionally small slope.
We shall extend the vector bundles $\E$ and $\F$ over a partial
compactification of $\mathfrak{G}^6_{25}$. We denote by
$\Delta_1^p\subset \Delta_1^0\subset \mm_{g}$ the locus of curves
$[C\cup_y E]$, where $E$ is an arbitrary elliptic curve, $[C]\in
\cM_{g-1}$ is a Brill-Noether general curve and $y\in
C$ is an arbitrary point. We also denote by $\Delta_0^p\subset
\Delta_0^0\subset \mm_{g}$ the locus consisting of curves $[C_{yq}]\in \Delta_0^0$, where $[C, q]\in \cM_{g-1, 1}$ is Brill-Noether general and $y\in
C$ is arbitrary, as well as their degenerations $[C\cup_q E_{\infty}]$ where
$E_{\infty}$ is a rational nodal curve (that is, $j(E_{\infty})=\infty$).
Once we set
$$\ttem_g:=\tem_g \cup \Delta_0^p\cup \Delta_1^p\subset \pem_g,$$
we can extend the map $\sigma$ to a proper morphism
$\sigma:\widetilde{\mathfrak{G}}^6_{25}\rightarrow
\ttem_{22}$ from the stack
$\widetilde{\mathfrak{G}}^6_{25}$ of limit linear
series $\mathfrak g^6_{25}$ over the partial compactification  $\ttem_{22}$ of $\textbf{M}_{22}$.

Like in to \cite{F2}, \cite{F3} or in Section 5 of this paper, we intersect the (virtual) divisor
$\overline{\mathfrak{D}}_{22}$ with the test curves $C^0\subset \Delta_0^p$ and $C^1\subset \Delta_1^p$
obtained from  a general pointed curve $[C, q]\in \cM_{21, 1}$
and a general elliptic curve $[E, y]\in \cM_{1, 1}$. We explicitly describe the pull-back $2$-cycles under $\sigma$ of the test curves $C^0$ and $C^1$:

\begin{proposition}\label{limitlin1}
Fix general curves $[C]\in \cM_{21}$ and $[E, y]\in \cM_{1, 1}$ and
consider the associated test curve $C^1\subset \Delta_1\subset
\mm_{22}$. Then we have the following equality of $2$-cycles in
$\widetilde{\mathfrak{G}}_{25}^6$:
$$\sigma^*(C^1)=X+ X_1\times X_2+ \Gamma_0\times  Z_0+ n_1\cdot Z_1+n_2\cdot Z_2+n_3\cdot Z_3,$$
where
$$X:=\{(y, L)\in C\times W^6_{25}(C): h^0(C, L\otimes
\OO_C(-2y))=6\},$$ $$ X_1:=\{(y, L)\in C\times W^6_{25}(C):
a^{L}(y)=(0, 2, 3, 4, 5, 6, 8)\}, $$ $$ X_2:=\{l_E\in G^6_8(E):
a_1^{l_E}(y)\geq 2, a_6^{l_E}(y)=8\}\cong
\PP\Bigl(\frac{H^0(\OO_E(8y))}{H^0(\OO_E(6y))}\Bigr)$$
$$\Gamma_0:=\{(y, A\otimes \OO_C(y)): y\in C, A\in W^6_{24}(C)\},\mbox{
} Z_0=G^6_7(E)\cong E,$$ $$ Z_1:=\{l_E\in G^6_9(E): a_1^{l_E}(y)\geq
3, a_6^{l_E}(y)=9\}\cong
\PP\Bigl(\frac{H^0(\OO_E(9y))}{H^0(\OO_E(6y))}\Bigr),$$
$$Z_2:=\{l_E\in G^6_8(E):a_2^{l_E}(y)\geq 3, a_6^{l_E}(y)=8\}\cong
\PP\Bigl(\frac{H^0(\OO_E(8y))}{H^0(\OO_E(5y))}\Bigr),$$
$$Z_3:=\{l_E\in G^6_8(E): a^{l_E}(y)\geq (0, 2, 3, 4, 5, 6, 7)\} \cong \bigcup_{z\in E} \PP\Bigl(\frac{H^0(\OO_E(7y+z))}
{H^0(\OO_E(5y+z))}\Bigr), $$ where the constants $n_1, n_2$
 and $n_3$  are explicitly known positive integers.
\end{proposition}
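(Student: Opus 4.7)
The plan is to enumerate all the components of $\sigma^{-1}(C^1)$ by means of Eisenbud--Harris limit linear series theory on compact-type curves, and then to determine the multiplicity with which each one appears in the flat pull-back $\sigma^*(C^1)$. A limit $\mathfrak g^6_{25}$ on $C\cup_y E$ consists of a pair of aspects $l_C\in G^6_{25}(C)$ and $l_E\in G^6_{25}(E)$ satisfying the compatibility relation $a_i^{l_C}(y)+a_{6-i}^{l_E}(y)\geq 25$ for $0\leq i\leq 6$. Since $[C]\in \cM_{21}$ is Brill--Noether general one has that $W^6_{25}(C)$ is reduced of dimension $7$, that $W^6_{24}(C)$ is finite, and that $W^6_d(C)=\emptyset$ for $d\leq 23$; together with the genericity of $[E,y]\in \cM_{1,1}$ this will cut down the list of admissible pairs $(\alpha^{l_C}(y),\alpha^{l_E}(y))$ of ramification sequences to a finite one.

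The first step would be to write down, for each candidate ramification sequence $\alpha^{l_C}(y)$ on the $C$-side, the complementary constraint imposed on $\alpha^{l_E}(y)$, and then to compute both the dimension of the locus in $C\times G^6_{25}(C)$ carrying this ramification and the dimension of the corresponding stratum in $G^6_{25}(E)$ at the fixed marked point $y\in E$. Using that $E$ has genus one and that $h^0(E,\OO_E(ky))=k$ for $k\geq 1$, the $E$-aspects of prescribed vanishing are either unique or form explicit projective bundles obtained as $\PP\bigl(H^0(\OO_E(ay))/H^0(\OO_E(by))\bigr)$; this produces precisely the factors $X_2, Z_0, Z_1, Z_2, Z_3$ appearing in the statement. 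On the $C$-side the locus $X$ corresponds to the generic ramification $\alpha^{l_C}(y)=(0,1,0,0,0,0,0)$, i.e. $a^{l_C}(y)=(0,2,3,4,5,6,7)$, which is equivalent to $h^0(C,L(-2y))=6$; the stratum $X_1$ corresponds to the next admissible sequence $\alpha^{l_C}(y)=(0,1,1,1,1,1,2)$; and the locus $\Gamma_0$ corresponds to $l_C$ having a base point at $y$, so that $l_C=A\otimes \OO_C(y)$ with $A\in W^6_{24}(C)$. The remaining candidate sequences either violate Brill--Noether (because they would force a $\mathfrak g^6_d$ with $d\leq 23$ on $C$) or produce strata of dimension $<2$ and are thus discarded.

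The second step is a matching of dimensions: I would verify that each of the components listed in the proposition has the correct dimension $2$ as a cycle in $\widetilde{\mathfrak G}^6_{25}$, so that their sum accounts for the full flat pull-back. For $X$ and $X_1\times X_2$ the fibre over a general $y\in C$ is visibly $1$-dimensional; for $\Gamma_0\times Z_0$ the parameter $y$ varies on $C$ while the $E$-aspect is a complete $\mathfrak g^6_7\cong W^6_7(E)\cong E$; for $Z_1,Z_2,Z_3$ the $C$-aspect is rigid (forced to be a line bundle of the form $\OO_C(ky+D)$ with $D$ a specific divisor) and all the moduli are on the $E$-side. A uniform check via the adjusted Brill--Noether formula $\rho(22,6,25)-w^{l_C}(y)-w^{l_E}(y)$ eliminates all further strata.

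The last step, and the true source of technical difficulty, is the computation of the multiplicities $n_1,n_2,n_3$. On the main component $X$ and on $X_1\times X_2$, $\Gamma_0\times Z_0$ the morphism $\sigma$ is generically reduced along the stratum and the multiplicity is $1$; but for the components $Z_1,Z_2,Z_3$, whose aspects $l_C$ acquire many base points or cuspidal behaviour at $y$, the local model for the smoothing of a limit series in the Eisenbud--Harris stack $\widetilde{\mathfrak G}^6_{25}\rightarrow \ttem_{22}$ has non-reduced fibre. The plan is to produce an explicit versal smoothing of $(C\cup_y E, l_C, l_E)$ along the nodal direction and read off the order to which the two aspect conditions fail to be transverse; this order is $n_j$. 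This step is analogous to the multiplicity calculations carried out in \cite{F2}, \cite{F3}: one deforms the node with parameter $t$, interprets $l_C$ and $l_E$ as rank-$7$ subbundles in a flat family of rank-$25$ push-forwards, and computes the vanishing order of the determinant of the matching matrix along the corresponding stratum. I expect this local model computation to be the main obstacle, since it requires tracking precisely how the higher-order ramification on the $E$-side contributes to the smoothing equations.
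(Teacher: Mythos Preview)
Your overall strategy---enumerate limit $\mathfrak g^6_{25}$'s on $C\cup_y E$ via the compatibility inequalities, stratify by the ramification sequence of the $C$-aspect at $y$, and retain only the $2$-dimensional pieces---is exactly the one the paper has in mind (the paper does not spell out a proof here, but the Remark following the proposition and the parallel argument for $C^0$ make the intended method clear). Your identification of the strata $X$, $X_1\times X_2$ and $\Gamma_0\times Z_0$ is correct.

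Where your proposal goes wrong is in the last paragraph, in your interpretation of the integers $n_1,n_2,n_3$. You treat them as scheme-theoretic multiplicities coming from a non-reduced fibre of the Eisenbud--Harris stack over the boundary, to be extracted by a versal-smoothing computation. That is not what they are. Look at the Remark immediately after the proposition: $n_1$ is the \emph{number of pairs} $(y,L)\in C\times W^6_{25}(C)$ with $a^L(y)=(0,2,3,4,5,6,9)$; likewise $n_2$ counts pairs with $a^L(y)=(0,2,3,4,5,7,8)$, and $n_3$ counts points $y\in C$ at which some $A\in W^6_{24}(C)$ is ramified. In each case the $C$-aspect is not a single rigid object but a \emph{finite set} of $n_j$ points, and each such point contributes one reduced copy of the $E$-aspect surface $Z_j$. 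So $n_j\cdot Z_j$ means ``$n_j$ disjoint copies of $Z_j$'', not ``$Z_j$ with multiplicity $n_j$''. The determination of $n_j$ is a Schubert-type enumerative problem on $C\times W^6_{25}(C)$ (solvable by Porteous/Harris--Tu), not a local deformation analysis; and in fact for the application in Theorem~\ref{d1} the precise values of $n_j$ are irrelevant, since $\E$ and $\F$ restrict trivially to each $Z_j$.

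So your ``main obstacle'' is a phantom: drop the versal-smoothing plan and replace it by the observation that the fibre of $\sigma$ over each such stratum is reduced, together with the straightforward count of $C$-aspects with the prescribed vanishing sequence.
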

\begin{remark} The constants $n_i, 1\leq i\leq 3$ have the following enumerative interpretation. First  $n_1$ is the number of linear series $L\in W^6_{25}(C)$ such that
there exists an unspecified point $y\in C$ with $a^L(y)=(0, 2,3, 4,
5, 6, 9)$. Similarly, $n_2$ is the number of those $L\in
W^6_{25}(C)$ for which there exists $y\in C$ with $a^{L}(y)=(0, 2,
3, 4, 5, 7, 8)$. Finally $n_3$ is the number of points $y\in C$ such
that there exists $L\in W^6_{24}(C)$ which is ramified at $y$. If $n_0$ is the number of $\mathfrak g^6_{24}$'s on $C$, then $\Gamma_{0}$ consists of $n_0$ disjoint copies of the curve $C$.
\end{remark}
Before describing $\sigma^*(C^0)$, we set some more notation. For a general pointed curve $[C, q]\in \cM_{21, 1}$ we denote by $Y$ the
surface
$$Y:=\{(y, L)\in C\times W^6_{25}(C): h^0(C, L\otimes
\OO_C(-y-q))=6\}$$ and by $\pi_1:Y \rightarrow C$ the first
projection. Inside $Y$ we consider two curves corresponding to
$\mathfrak g^6_{25}$'s with a base point at $q$:
$$\Gamma_1:=\{(y, A\otimes \OO_C(y)): y\in C, A\in W^6_{24}(C)\}
\ \mbox{  and}$$
$$\Gamma_2:=\{(y, A\otimes \OO_C(q)): y\in C, A\in
W^6_{24}(C)\}$$ intersecting transversally in $n_0=\#\bigl(W^6_{24}(C)\bigr)$
points. Note that since $[C]\in \cM_{21}$ is Brill-Noether general,
$W^6_{24}(C)$ is a reduced $0$-dimensional scheme consisting of
$n_0$ very ample (in particular, base point free) $\mathfrak
g^6_{24}$'s. We denote by $Y'$ the blow-up of $Y$ at these $n_0$
points and at the points $(q, B)\in Y$ where $B\in W^6_{25}(C)$ is a
linear series with the property that $h^0(C, B\otimes
\OO_C(-8q))\geq 1$. We denote by $E_A, E_B\subset Y'$ the
exceptional divisors corresponding to $(q, A\otimes \OO_C(q))$ and
$(q, B)$ respectively, by $\epsilon: Y'\rightarrow Y$ the projection
and by $\widetilde{\Gamma}_1, \widetilde{\Gamma}_2\subset Y'$ the
strict transforms of $\Gamma_1$ and $\Gamma_2$ respectively.

\begin{proposition}\label{limitlin0}
Fix a general curve $[C, q]\in \cM_{21, 1}$ and consider the
associated test curve $C^0\subset \Delta_0\subset \mm_{22}$. Then we
have the following equality of $2$-cycles in
$\widetilde{\mathfrak{G}}_{25}^6$:
$$\sigma^*(C^0)=Y'/\widetilde{\Gamma}_1\cong \widetilde{\Gamma}_2,$$
that is, $\sigma^*(C^0)$ can be naturally identified with the
surface obtained from $Y'$ by identifying the disjoint curves
$\widetilde{\Gamma}_1$ and $\widetilde{\Gamma}_2$ over each pair
$(y, A)\in C\times W^6_{24}(C)$.
\end{proposition}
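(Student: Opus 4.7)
The plan is to describe $\sigma^{-1}(C^0)$ by stratifying it according to the type of the underlying rank-$1$ torsion-free sheaf on the irreducible nodal curve $C_{yq}$, distinguishing locally-free sheaves (which will fill out the open part of $Y$) from non-locally-free ones at the node (which produce the two curves $\Gamma_1$ and $\Gamma_2$), and then identifying the blow-ups and gluings needed to assemble these pieces into an honest surface.

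First I would treat the locally free case. For a generic $y\in C\setminus\{q\}$, a line bundle $M\in \mathrm{Pic}^{25}(C_{yq})$ pulls back along the normalization $\nu\colon C\to C_{yq}$ to a line bundle $L$ of degree $25$ on $C$, together with a gluing $\phi\colon L_q\xrightarrow{\cong} L_y$ determined up to $\GG_m$. Sections of $M$ correspond to sections $s\in H^0(C,L)$ such that $s(y)=\phi(s(q))$, so
$$h^0(C_{yq},M)\;=\;\dim\ker\bigl(H^0(C,L)\to (L_y\oplus L_q)/\mathrm{graph}(\phi)\bigr).$$
For a $\mathfrak g^6_{25}$ one needs $h^0(M)\geq 7$. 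Since $[C]\in\cM_{22}^p$ satisfies $W^7_{25}(C)=\emptyset$, this forces $h^0(L)=7$ and $h^0(L(-y-q))=6$, which is exactly the defining condition of $Y$; moreover $\phi$ is then uniquely determined by the requirement that the $1$-dimensional image of $H^0(L)\to L_y\oplus L_q$ lie in $\mathrm{graph}(\phi)$. This yields a bijection between the open part of $Y$ and the generic locally-free fibre of $\sigma$ over $C^0$.

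Next I would handle non-locally-free torsion-free sheaves $M$ of rank $1$ and Euler characteristic matching degree $25$. Any such $M$ equals $\nu_*(A)$ for a line bundle $A$ of degree $24$ on $C$ with $H^0(M)=H^0(C,A)$, so $h^0(M)\geq 7$ forces $A\in W^6_{24}(C)$. On the semistable model $C\cup_q E_\infty$, the resulting limit $\mathfrak g^6_{25}$ admits two natural line-bundle descriptions on $C$ depending on which of the two branches at the node absorbs the twist: the $C$-aspect is $A\otimes\OO_C(y)$ in one labelling and $A\otimes\OO_C(q)$ in the other. These two descriptions produce precisely the curves $\Gamma_1=\{(y,A\otimes\OO_C(y))\}$ and $\Gamma_2=\{(y,A\otimes\OO_C(q))\}$ inside $Y$, and since both parameterize the same torsion-free sheaf on $C_{yq}$, they must be identified in $\sigma^{-1}(C^0)$. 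This forces the quotient $\widetilde{\Gamma}_1\cong\widetilde{\Gamma}_2$ appearing in the statement.

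Finally, the map from $Y$ to $\sigma^{-1}(C^0)$ is only birational; one must blow up the points where the above dictionary degenerates. These are of two kinds: the $n_0$ points $(q,A\otimes\OO_C(q))\in \Gamma_1\cap\Gamma_2$, where the blow-up is needed simply to separate the two branches $\widetilde{\Gamma}_1$ and $\widetilde{\Gamma}_2$ so that the identification $\widetilde{\Gamma}_1\sim\widetilde{\Gamma}_2$ makes sense as a pushout; and the points $(q,B)$ with $B\in W^6_{25}(C)$ satisfying $h^0(B(-8q))\geq 1$, where the condition $h^0(L(-y-q))=6$ collapses and the locus of admissible gluings becomes positive dimensional on the degenerate model $C\cup_q E_\infty$. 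At these points, the exceptional divisor $E_B$ must be inserted to parameterize the additional family of $E_\infty$-aspects that make the Plücker compatibility inequality an equality. The main obstacle is the precise local analysis at the points $(q,B)$: one must work out the scheme structure on the fibre of $\sigma^{-1}(C^0)\to C^0$ over $[C\cup_q E_\infty]$ using the refined limit linear series formalism of \cite{EH1}, adapted to the nodal rational tail $E_\infty$, and verify that the exceptional divisor $E_B$ appears with multiplicity one and contributes a rational curve to $\sigma^*(C^0)$. Once this local computation is completed, properness of $\sigma$ together with the bijection established on the generic stratum identifies $Y'/(\widetilde{\Gamma}_1\sim\widetilde{\Gamma}_2)$ with $\sigma^*(C^0)$ as schemes.
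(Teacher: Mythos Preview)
Your approach is essentially the same as the paper's: stratify $\overline{W}^6_{25}(C_{yq})$ into locally free and non-locally-free sheaves, identify the former with the open part of $Y$ via $\nu^*$, identify each $\nu_*(A)$ with the pair of points $(y,A(y))$ and $(y,A(q))$ in $Y$ (giving the gluing $\widetilde\Gamma_1\sim\widetilde\Gamma_2$), and treat the fibre over $y=q$ via limit linear series on $C\cup_q E_\infty$ to account for the exceptional divisors $E_A$ and $E_B$.

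One small slip: in your second paragraph you invoke ``the semistable model $C\cup_q E_\infty$'' while discussing generic $y\neq q$. What you presumably mean is the quasi-stable model of $C_{yq}$ obtained by inserting a rational bridge at the node $y\sim q$; the curve $C\cup_q E_\infty$ is specifically the $y=q$ degeneration. The paper makes the same point more directly by observing that $Y\cap\pi_1^{-1}(y)$ is the partial normalization of $\overline{W}^6_{25}(C_{yq})$ at the $n_0$ points $\nu_*(A)$. Your explicit computation with the gluing datum $\phi$ in the locally free case is a nice addition the paper omits. As for the blow-up at $(q,B)$, the paper identifies $E_B$ concretely with $\PP\bigl(H^0(E_\infty(8q))/H^0(E_\infty(6q))\bigr)$, arising from the $E_\infty$-aspects compatible with $a^{l_C}(q)=(0,2,3,4,5,6,8)$; this is the local computation you flag as the main obstacle, and neither you nor the paper carries it out in full detail.
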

\begin{proof}
We fix a point $y\in C-\{q\}$, denote by $[C_{yq}:=C/ y\sim q]\in
\Delta_0^p\subset \mm_{22}$ and by $\nu:C\rightarrow C_{yq}$ the normalization map. We
describe the variety $\overline{W}^6_{25}(C_{yq})\subset
\overline{\mbox{Pic}}^{25}(C_{yq})$ of torsion-free sheaves $L$ on
the $1$-nodal curve $C_{yq}$, with $\mbox{deg}(L)=25$ and $h^0(C_{yq}, L)\geq 7$.

If $L\in
W^6_{25}(C_{yq})\subset \overline{W}^6_{25}(C_{yq})$, that is, $L$ is a locally free sheaf, then $L$ is
completely determined by $\nu^*(L)\in W^6_{25}(C)$ which has the property that
$h^0(C, \nu^*L\otimes \OO_C(-y-q))=6$. However, the line bundles of
type $A\otimes \OO_C(y)$ or $A\otimes \OO_C(q)$ with $A\in
W^6_{24}(C)$, do not appear in this association even though they
have this property. In fact, they correspond to the situation when
$L\in \overline{W}_{25}^6(C_{yq})$ is not locally free, in which case
necessarily one has that $L=\nu_*(A)$, for some $A\in W^6_{24}(C)$. Thus $Y\cap
\pi_1^{-1}(y)$ is the partial normalization of
$\overline{W}_{25}^6(C_{yq})$ at the $n_0$ points of the form
$\nu_*(A)$ with $A\in W^6_{24}(C)$. A special analysis is required
when $y=q$, that is, when $C_y^0$ degenerates to $C\cup _q
E_{\infty}$, where $E_{\infty}$ is a rational nodal cubic. If
$\{l_C, l_{E_{\infty}}\}\in \sigma^{-1}([C\cup_{q} E_{\infty}])$,
then an argument along the lines of Theorem \ref{limitlin1} shows
that $\rho(l_C, q)\geq 0$ and $\rho(l_{E_{\infty}}, q)\leq 1$. Then
either $l_C$ has a base point at $q$ and then the underlying line
bundle of $l_C$ is of type $A\otimes \OO_C(q)$ while
$l_{E_{\infty}}(-18q)\in \overline{W}_7^6(E_{\infty})$, or else,
$a^{l_C}(q)=(0, 2, 3, 4, 5, 6, 8)$ and then $l_{E_{\infty}}(-17q)\in
\PP\bigl(H^0(E_{\infty}(8q))/H^0(E_{\infty}(6q))\bigr)\cong E_B$,
where $B\in W^6_{25}(C)$ is the underlying line bundle of $l_C$.
\end{proof}

We extend the vector bundles $\E$ and $\F$ over the
stack  $\widetilde{\mathfrak G}^6_{25}$ of limit linear series. The proof of the following result proceeds along the lines of the proof of
Proposition 3.9 in \cite{F2}:
\begin{proposition}
There exist two vector bundles $\E$ and $\F$ defined over
$\widetilde{\mathfrak G}^6_{25}$ with $\rm{rank}$$(\E)=7$ and
$\mathrm{rank}(\F)=29$ together with a vector bundle morphism $\phi:\mathrm{Sym}^2(\E)\rightarrow \F$,
such that the following statements hold:
\begin{itemize}
\item For $(C, L)\in \mathfrak{G}^6_{25}$, with $[C]\in \cM_{22}^p$, we have that
$\E(L)=H^0(C, L)$ and $\F(L)=H^0(C, L^{\otimes 2}).$
\item For $t=(C\cup_y E, l_C, l_E)\in \sigma^{-1}(\Delta_1^p)$,
where $g(C)=21, g(E)=1$ and $l_C=|L_C|$ is such that  $L_C\in
W^6_{25}(C)$ has a cusp at $y\in C$, then $\E(t)=H^0(C, L_C)$ and
$$\F(t)=H^0(C, L_C^{\otimes 2}(-2y))\oplus \mathbb C\cdot u^2,$$ where
$u\in H^0(C, L_C)$ is any section such that $\rm{ord}$$_y(u)=0$.
If $L_C$ has a base point at $y$, then $$\E(t)=H^0(C, L_C)=H^0(C, L_C\otimes \OO_C(-y))$$ and the image of a natural
map $\F(t)\rightarrow H^0(C, L_C^{\otimes 2})$ is the subspace $H^0(C, L_C^{\otimes 2}\otimes \OO_C(-2y))$.
\item Fix $t=(C_{yq}:=C/y\sim q, L)\in \sigma^{-1}(\Delta_0^p)$, with $q,
y\in C$ and $L\in \overline{W}^6_{25}(C_{yq})$  such that $h^0(C,
\nu^*L\otimes \OO_C(-y-q))=6$, where $\nu:C\rightarrow C_{yq}$ is the
normalization map.

In the case when $L$ is locally free we have that
$$\E(t)=H^0(C, \nu^*L)\ \mbox{ and }$$
$$ \F(t)=H^0(C, \nu^*L^{\otimes
2}\otimes \OO_C(-y-q))\oplus \mathbb C\cdot u^2,$$ where $u\in
H^0(C, \nu^*L)$ is any section not vanishing at $y$ and $q$. In the
case when $L$ is not locally free, that is,  $L\in
\overline{W}_{25}^6(C_{yq})-W_{25}^6(C_{yq})$, then $L=\nu_*(A)$,
where $A\in W^6_{24}(C)$ and the image of the natural map
$\F(t)\rightarrow H^0(C, \nu^*L^{\otimes 2})$ is the subspace
$H^0(C, A^{\otimes 2})$.
\end{itemize}
\end{proposition}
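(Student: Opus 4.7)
The plan is to build $\E$, $\F$ and the morphism $\phi$ following the template of \cite{F2}, Proposition 3.9, by pushing forward suitably twisted universal sheaves along the relative Picard scheme of limit linear series. Over the open locus $\mathfrak{G}^6_{25} \subset \widetilde{\mathfrak G}^6_{25}$ of smooth curves the definitions $\E := (p_2)_*\mathcal{L}$ and $\F := (p_2)_*\mathcal{L}^{\otimes 2}$ are already in place, and both are locally free of the expected ranks $7$ and $29$ by Grauert's theorem together with the defining conditions of $\tem_{22}$ (which rule out unexpected jumps in $h^0$). The multiplication map $\phi\colon \mathrm{Sym}^2(\E)\to \F$ is the obvious one. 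Thus the real content is to extend $\E$, $\F$ and $\phi$ across $\Delta_1^p$ and $\Delta_0^p$ and to identify the fibres as described.

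For the extension across $\Delta_1^p$, first I would fix a one-parameter smoothing $f\colon \mathcal{X}\to (T,0)$ of $[C\cup_y E]$. By the uniqueness of limit linear series of type $\mathfrak{g}^6_{25}$ on such a curve, the $C$-aspect $L_C$ and the $E$-aspect are determined, and the ramification inequality $\rho(l_C,y)+\rho(l_E,y)\le 0$ forces $L_C$ to have either a cusp or a base point at $y$. I would extend the Poincar\'e bundle $\mathcal{L}$ on $\mathcal{X}$ so that its restriction to $C$ equals $L_C$; then $\E := (p_2)_*\mathcal{L}$ has fibre $H^0(C, L_C)$ by base change. For $\F$ the naive pushforward $(p_2)_*\mathcal{L}^{\otimes 2}$ contains sections on $C\cup E$ agreeing at the node; after the standard twist by $-2E$ (which kills the elliptic contribution except for squares of sections nonvanishing at $y$), the fibre becomes exactly $H^0(C, L_C^{\otimes 2}(-2y))\oplus \mathbb{C}\cdot u^2$. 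The base-point case is parallel, with $L_C$ replaced by $L_C\otimes \OO_C(-y)$ and the fibre of $\F$ forced into $H^0(C, L_C^{\otimes 2}(-2y))$ by vanishing at $y$.

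For the extension across $\Delta_0^p$, let $\nu\colon C\to C_{yq}$ be the normalization and fix $L\in \overline{W}^6_{25}(C_{yq})$ with $h^0(C,\nu^*L(-y-q))=6$. On the locally free locus, sections of $L$ correspond to sections of $\nu^*L$ whose values at $y$ and $q$ agree, and sections of $L^{\otimes 2}$ similarly. Twisting $\mathcal{L}^{\otimes 2}$ along the universal family by $-y-q$ pulled back through $\nu$ yields $H^0(C,\nu^*L^{\otimes 2}(-y-q))\oplus \mathbb{C}\cdot u^2$ on the fibre, exactly the required description. At non-locally-free points one has $L=\nu_*(A)$ with $A\in W^6_{24}(C)$; using $H^i(C_{yq}, \nu_*A^{\otimes j}) = H^i(C, A^{\otimes j})$, the fibre of $\E$ is $H^0(C,A)$ and the image of the natural map $\F(t)\to H^0(C,\nu^*L^{\otimes 2})$ is precisely $H^0(C, A^{\otimes 2})$. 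The degeneration $C\cup_q E_\infty$ is handled by the limit linear series analysis already carried out in Proposition \ref{limitlin0}, which dictates the unique extension.

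The main obstacle will be showing that these pointwise constructions glue into honest vector bundles on $\widetilde{\mathfrak G}^6_{25}$ of constant ranks $7$ and $29$. This is a base-change question: one must check that the pushforward sheaves are locally free across the non-locally-free locus of $\overline{W}^6_{25}(C_{yq})$ and across the exceptional divisors $E_A$ and $E_B$ of $Y'$. Locally freeness reduces to the vanishing $R^1(p_2)_*(\mathcal{L}^{\otimes 2}) = 0$ for the relevant twist, which follows from the Brill-Noether genericity built into $\tem_{22}^p$ together with Riemann-Roch on the normalized curve. Once these vanishings and base-change statements are verified, the morphism $\phi$ automatically extends as a bundle map, and the fibre identifications above record the full content of the proposition.
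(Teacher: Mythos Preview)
Your proposal is correct and follows essentially the same approach as the paper: the paper itself gives no proof beyond the remark that the argument ``proceeds along the lines of the proof of Proposition 3.9 in \cite{F2}'', and your sketch expands precisely that template (twisting the universal sheaf to kill the tail contribution, base-change to identify fibres, Grauert/vanishing for local freeness). If anything you have supplied more detail than the paper does.
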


We determine the cohomology classes of the surfaces $X$ and $Y$
introduced in Propositions \ref{limitlin1} and \ref{limitlin0} respectively.
Our result are expressible in terms of standard cohomology classes
on Jacobians (cf. \cite{ACGH}, \cite{F5}), which we now recall.
If $[C]\in \cM_g$ is  a curve satisfying the Brill-Noether theorem,  we denote
by $\P$ a Poincar\'e bundle on $C\times \mbox{Pic}^d(C)$ and by
$$\pi_1:C\times \mbox{Pic}^d(C)\rightarrow C\ \mbox{ and } \ \pi_2:C\times
\mbox{Pic}^d(C)\rightarrow \mbox{Pic}^d(C)$$ the projections. We
define the cohomology class $\eta=\pi_1^*([point])\in H^2(C\times
\mbox{Pic}^d(C))$, and if $\delta_1,\ldots, \delta_{2g}\in H^1(C,
\mathbb Z)\cong H^1(\mbox{Pic}^d(C), \mathbb Z)$ is a symplectic
basis, then we set
$$\gamma:=-\sum_{\alpha=1}^g
\Bigl(\pi_1^*(\delta_{\alpha})\pi_2^*(\delta_{g+\alpha})-\pi_1^*(\delta_{g+\alpha})\pi_2^*(\delta_
{\alpha})\Bigr).$$ We have the formula $c_1(\P)=d\eta+\gamma,$
corresponding to the Hodge decomposition of $c_1(\P)$. We also
record that $\gamma^3=\gamma \eta=0$, $\eta^2=0$ and
$\gamma^2=-2\eta \pi_2^*(\theta)$. On $W^r_d(C)$  we have the
tautological rank $r+1$ vector bundle
$\mathcal{M}:=(\pi_2)_{*}(\mathcal{P}_{| C\times W^r_d(C)})$. The
Chern numbers of $\mathcal{M}$ can be computed using the Harris-Tu formula.
By repeatedly applying it, we get all intersection numbers on
$W^r_d(C)$ which we need:
\begin{lemma}\label{vandermonde}
If $[C]\in \cM_{21}$ is Brill-Noether general and
$c_i:=c_i(\mathcal{\cM}^{\vee})$ are the Chern classes of the dual
of the tautological bundle on $W^2_{17}(C)$, we have the following
identities in $H^*(W^2_{17}(C), \mathbb Z)$:
$$ [W^2_{17}(C)]=\frac{\theta^{18}}{73156608000}.$$
$$ x_1\cdot \xi=\frac{\theta^{19}\cdot \xi}{219469824000},$$
$$ \ x_2\cdot \xi=x_3
\cdot \xi=0, \mbox{ for any } \xi\in H^4(\rm{Pic}^{21}(C)).$$
$$ x_1x_2\cdot \xi=\frac{\theta^{20}}{1755758592000}\cdot \xi, $$
$$\ x_1x_3\cdot \xi=x_2x_3\cdot \xi=0,
\mbox{ for any } \xi\in H^2(\rm{Pic}^{21}(C)),$$
$$ x_1^2 \cdot \xi=\frac{\theta^{20}}{1097349120000}\cdot \xi,$$
$$  x_2^2 \cdot \xi=-x_1x_2 \cdot \xi, \ x_3^2\cdot
\xi=0, \mbox{ for any } \xi \in H^2(\rm{Pic}^{21}(C)),$$
$$ x_1^3=\frac{\theta^{21}}{7242504192000}, \ x_2^3=-\frac{t^{21}}{6584094720000},$$
$$\ x_3^3=x_1x_2x_3=\frac{\theta^{21}}
{36870930432000},$$
$$ x_1^2x_2=-x_2^3,\  x_1x_2^2=x_1^2x_3=x_2x_3^2=0,\ x_1x_3^2=x_2^2 x_3=-x_1x_2x_3. $$
\end{lemma}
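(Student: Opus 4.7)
The plan is to express each intersection number on $W^2_{17}(C)$ as a top-degree integral on the Jacobian $\mathrm{Pic}^{17}(C)$, and then evaluate using $\int_{\mathrm{Pic}^{21}(C)}\theta^{21}=21!$. The key tool is the Kempf--Laksov refinement of the Thom--Porteous formula (see \cite{ACGH}, Ch.\ VII). After fixing an effective divisor $E$ of large degree $e$ on $C$, the locus $W^2_{17}(C)\subset \mathrm{Pic}^{17}(C)$ arises as the third degeneracy locus of the evaluation morphism of bundles
$$u\colon \mathcal{P}_E := (\pi_2)_*\bigl(\mathcal{P}\otimes \pi_1^*\OO_C(E)\bigr)\longrightarrow \mathcal{Q}_E := (\pi_2)_*\bigl(\mathcal{P}\otimes \pi_1^*\OO_E(E)\bigr),$$
of ranks $e-3$ and $e$ respectively; on the locus where $\dim\ker u = 3$ the kernel is canonically $\mathcal{M}$. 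Since $c(\mathcal{Q}_E)=1$ and $c(\mathcal{P}_E^\vee)=e^{\theta}$ on $\mathrm{Pic}^{17}(C)$ (by Grothendieck--Riemann--Roch applied to $\pi_2$), the Kempf--Laksov formula rewrites each monomial $x_1^{a} x_2^{b} x_3^{c}$ pushed from $W^2_{17}(C)$ to $\mathrm{Pic}^{17}(C)$ as an explicit Schur polynomial in $\theta$.

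First I would derive the fundamental class formula. This is the specialisation of the classical identity
$$[W^r_d(C)] = \prod_{i=0}^{r}\frac{i!}{(g-d+r+i)!}\,\theta^{(r+1)(g-d+r)}$$
to $(g,r,d) = (21,2,17)$, where the prefactor equals $2/(6!\,7!\,8!) = 1/73156608000$. For the remaining identities I would apply the more general Porteous formula: for any partition $\lambda\subseteq 3\times 6$,
$$(\iota_W)_*\bigl(\Delta_\lambda(c(\mathcal{M}^\vee)) \cap [W^2_{17}(C)]\bigr) = \Delta_{\mu(\lambda)}\bigl(c(\mathcal{P}_E - \mathcal{Q}_E)\bigr) \in H^*(\mathrm{Pic}^{17}(C)),$$
where $\mu(\lambda)$ is obtained by stacking $\lambda$ on the Porteous $(3\times 6)$-rectangle and $\iota_W$ denotes the inclusion. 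By Jacobi--Trudi, each monomial $x_1^a x_2^b x_3^c$ is a linear combination of the $\Delta_\lambda$'s. The vanishing statements in the lemma ($x_2\,\xi = x_3\,\xi = 0$ for $\xi\in H^4$, $x_1 x_3\,\xi = x_2 x_3\,\xi = 0$ for $\xi\in H^2$, and the identities $x_1^2 x_2 = -x_2^3$, $x_1 x_2^2 = 0$, etc.) correspond to those $\mu(\lambda)$ whose associated Schur polynomial either exceeds dimension $21$ on $\mathrm{Pic}^{17}(C)$ or vanishes identically; the non-vanishing numbers follow by extracting the coefficient of $\theta^{21}$ in each Schur expansion of $e^{\theta}$ and pairing with $\xi$.

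The main obstacle will not be conceptual but arithmetic: each of the long list of claimed fractions demands a careful Jacobi--Trudi expansion of a Schur polynomial in the Chern classes of $\mathcal{P}_E - \mathcal{Q}_E$, followed by extraction of the $\theta^{21}$-coefficient. A single slip in the multinomial bookkeeping propagates silently through the table. The safest strategy is to tabulate the required Schur expansions in a short computer algebra routine (in the spirit of the Macaulay\,2 verifications of \cite{F2}) and confirm each entry of the lemma against this table. Conceptually all of the work lies in the single reduction from $W^2_{17}(C)$ to $\mathrm{Pic}^{17}(C)$ via Kempf--Laksov; the rest is numerical.
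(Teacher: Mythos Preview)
Your proposal is correct and is essentially the approach the paper intends: the text preceding the lemma says only that ``the Chern numbers of $\mathcal{M}$ can be computed using the Harris--Tu formula,'' and the Harris--Tu formula is precisely the push-forward identity for Chern classes of the kernel bundle in a degeneracy locus, i.e.\ the Kempf--Laksov/Porteous machinery you describe (with the Schur polynomials in $c_t(\mathcal{P}_E-\mathcal{Q}_E)=e^{\theta}$). Your verification of the prefactor $2/(6!\,7!\,8!)=1/73156608000$ for the fundamental class matches, and the remaining entries are, as you say, a bookkeeping exercise in Jacobi--Trudi expansions evaluated against $\int\theta^{21}=21!$.
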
The next calculation is a particular case
of  \cite{F5} Proposition 2.7:
\begin{proposition}\label{xy}
Let $[C]\in \cM_{21}$ be a Brill-Noether general curve and $q\in C$
a general point. If $\cM$ denotes the tautological rank $3$ vector
bundle over $W^2_{17}(C)$ and $c_i:=c_i(\cM^{\vee})$, then one has
the following relations:
\begin{enumerate}
\item
$[X]=\pi_2^*(c_2)-6\eta \theta+(74\eta+2\gamma) \pi_2^*(c_1) \in
H^4(C\times W^2_{17}(C))$.
\item
$[Y]=\pi_2^*(c_2)-2\eta \theta+ (16\eta+\gamma) \pi_2^*(c_1) \in
H^4(C\times W^2_{17}(C))$.
\end{enumerate}
\end{proposition}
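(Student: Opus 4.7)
The plan is to realize $X$ and $Y$ as corank--$1$ degeneracy loci of natural tautological morphisms on $C\times W^2_{17}(C)$ and then to apply Porteous' formula, closely following the method of \cite{F5}, Proposition 2.7. The first step is to translate the defining conditions to the $W^2_{17}(C)$--side via the residuation isomorphism $L\mapsto A:=K_C\otimes L^{\vee}$. Serre duality and Riemann--Roch give
$$h^0(C,L(-2y))=h^1(C,A(2y))=h^0(C,A(2y))+2 \quad\text{and}\quad h^0(C,L(-y-q))=h^0(C,A(y+q))+2,$$
so $X$ corresponds to the locus where $h^0(C,A(2y))\geq 4$ (equivalently where the $1$--jet evaluation of $|A|$ is non--surjective at $y$), and $Y$ to the locus where $h^0(C,A(y+q))\geq 4$ (equivalently where the two--point evaluation of $|A|$ at $\{y,q\}$ is non--surjective).

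Next I would set up the Porteous computation. Let $\cM$ be the rank--$3$ tautological bundle on $W^2_{17}(C)$, extended in the usual way (as in Lemma~\ref{vandermonde}) to a virtual rank--$3$ class on $\mathrm{Pic}^{17}(C)$; pulling back to $C\times\mathrm{Pic}^{17}(C)$ one obtains the evaluation morphisms
$$\phi_Y\colon \pi_2^*\cM\longrightarrow \P\,\oplus\,\pi_2^*\bigl(\P|_{\{q\}\times \mathrm{Pic}^{17}(C)}\bigr)\quad\text{and}\quad\phi_X\colon \pi_2^*\cM\longrightarrow J^1_{\pi_1}(\P),$$
whose corank--$1$ loci are respectively $Y$ and $X$; here $J^1_{\pi_1}(\P)$ denotes the first relative jet bundle along the $C$--fibre, fitting into an exact sequence $0\to \P\otimes\pi_1^*\omega_C\to J^1_{\pi_1}(\P)\to \P\to 0$. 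In both cases a rank--$3$ bundle maps to a rank--$2$ bundle, the expected codimension of the corank--$1$ locus is $2$, and Porteous' formula identifies the class of that locus with
$$c_2\bigl(F-\pi_2^*\cM\bigr)=c_2(F)-c_1(F)\cdot\pi_2^*c_1+\pi_2^*c_1^2-\pi_2^*c_2,$$
where $F$ is the respective target and $c_i:=c_i(\cM^{\vee})$.

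The remainder is a bookkeeping expansion using $c_1(\P)=17\eta+\gamma$ and the relations $\eta^2=\eta\gamma=0$, $\gamma^2=-2\eta\pi_2^*(\theta)$. For $F=J^1_{\pi_1}(\P)$ one has $c_1(F)=2c_1(\P)+(2g-2)\eta=76\eta+2\gamma$ and $c_2(F)=c_1(\P)\bigl(c_1(\P)+42\eta\bigr)$; for $F=\P\oplus\pi_2^*\kappa_q$, with $\kappa_q:=c_1\bigl(\P|_{\{q\}\times\mathrm{Pic}^{17}(C)}\bigr)$ after a Poincar\'e normalization, one has $c_1(F)=17\eta+\gamma+\pi_2^*\kappa_q$ and $c_2(F)=(17\eta+\gamma)\cdot \pi_2^*\kappa_q$. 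Substituting into the Porteous expression above and collapsing via the three relations, each class reduces to a linear combination of $\pi_2^*c_2$, $\eta\pi_2^*\theta$, $\eta\pi_2^*c_1$ and $\gamma\pi_2^*c_1$; tracking the numerical coefficients yields exactly the stated formulas, the discrepancy between $X$ and $Y$ being the extra jet contribution $42\eta$ (which enlarges the $\eta\theta$ coefficient from $-2$ to $-6$) together with the coalescence of the two evaluation points in the $X$--case (which effectively doubles the $\gamma\pi_2^*c_1$ coefficient from $1$ to $2$).

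The principal technical hurdle will be the rigorous handling of $\cM$ as a rank--$3$ bundle: since $\rho(22,2,17)=1$, the Brill--Noether locus $W^2_{17}(C)$ is only $1$--dimensional, so that $c_2(\cM)$ strictly speaking vanishes when interpreted on $W^2_{17}(C)$ itself. As in the proof of Lemma~\ref{vandermonde} and in \cite{F5}, Proposition~2.7, one therefore works on an auxiliary Grassmann--bundle desingularization $\widetilde{W}^2_{17}(C)\to W^2_{17}(C)$ sitting inside a Grassmann bundle over an open part of $\mathrm{Pic}^{17}(C)$, performs the Chern class computation there, and pushes the resulting classes forward to $C\times\mathrm{Pic}^{17}(C)$; the Fulton--MacPherson / Harris--Tu formalism exploited in Lemma~\ref{vandermonde} automatically tracks all the relevant intersection numbers. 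Since \cite{F5}, Proposition~2.7 carries out essentially this scheme in a neighbouring numerical situation, I would adapt that argument to the present case $(g,r,d)=(22,2,17)$ rather than rederive the framework.
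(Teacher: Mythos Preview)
Your overall strategy is exactly the paper's: realize $X$ (resp.\ $Y$) as the corank--$1$ degeneracy locus of the jet evaluation $\pi_2^*\cM\to J^1_{\pi_1}(\P)$ (resp.\ the two--point evaluation at $y,q$), then apply Thom--Porteous. The paper dualizes and writes $\zeta\colon J_1(\P)^{\vee}\to\pi_2^*(\cM)^{\vee}$ (and similarly $\chi\colon\cB^{\vee}\to\pi_2^*(\cM)^{\vee}$ for $Y$), but this is the same computation.

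The substantive error is that you have taken $g(C)=22$; the proposition says $[C]\in\cM_{21}$. The curve $C$ is the genus--$21$ component of the test curves $C^0,C^1\subset\mm_{22}$, so the ambient moduli space is $\mm_{22}$ but $C$ itself has genus $21$. This single slip propagates throughout:
\begin{itemize}
\item Riemann--Roch on $\mathrm{Pic}^{17}(C)$ gives $h^1=h^0+3$, so the $X$--condition becomes $h^0(A(2y))\geq 3$, not $\geq 4$ (and your ``$+2$'' is wrong under either genus).
\item $\deg K_C=40$, hence $c_1\bigl(J^1_{\pi_1}(\P)\bigr)=2(17\eta+\gamma)+40\eta=74\eta+2\gamma$ and $c_2\bigl(J^1_{\pi_1}(\P)\bigr)=(17\eta+\gamma)(57\eta+\gamma)$. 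Your values $76\eta+2\gamma$ and $c_1(\P)\bigl(c_1(\P)+42\eta\bigr)$ come from $2g-2=42$ and would not reproduce the stated coefficients.
\item $\rho(21,2,17)=21-3\cdot 6=3$, so $W^2_{17}(C)$ is a smooth \emph{threefold} (the paper says this explicitly), $C\times W^2_{17}(C)$ is $4$--dimensional, and $\pi_2^*c_2$ is a genuine codimension--$2$ class there. Your entire ``principal technical hurdle'' rests on the miscalculation $\rho(22,2,17)=1$ and simply vanishes: no Grassmann--bundle desingularization is needed, and the paper works directly on $C\times W^2_{17}(C)$.
\end{itemize}
There is also a bookkeeping slip in your Porteous expression: for a map from a rank--$3$ bundle to a rank--$2$ bundle the corank--$1$ class is $\Delta_{(1,1)}\bigl(c(F-\pi_2^*\cM)\bigr)=c_1^2-c_2$ of the difference, not $c_2$; equivalently, dualize as the paper does and take $c_2\bigl(\pi_2^*\cM^{\vee}-F^{\vee}\bigr)$. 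Once you set $g=21$ and use the correct Porteous determinant, the expansion is a two--line computation and gives the stated formulas.
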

\begin{proof}
By Riemann-Roch, if $(y, L)\in X$, then the line bundle
$M:=K_C\otimes L^{\vee}\otimes \OO_C(2y)\in W^2_{17}(C)$ has a cusp
at $y$. We realize $X$ as the degeneracy locus of a vector
bundle map over $C\times W^2_{17}(C)$. For each pair $(y, M)\in C\times
W^2_{17}(C),$ there is a natural map $$H^0(C, M\otimes
\OO_{2y})^{\vee}\rightarrow H^0(C, M)^{\vee}$$ which globalizes to a
vector bundle morphism $\zeta: J_1(\mathcal{P})^{\vee} \rightarrow
\pi_2^*(\cM)^{\vee}$ over $C\times W^2_{17}(C)$ (Note that
$W^2_{17}(C)$ is a smooth $3$-fold). Then we have the identification
$X=Z_1(\zeta)$ and the Thom-Porteous formula gives that
$[X]=c_2\bigl(\pi^*_2(\cM)- J_1(\mathcal{P}^{\vee})\bigr).$ From the
usual exact sequence over $C\times \mbox{Pic}^{17}(C)$
$$
0\longrightarrow \pi_1^*(K_C)\otimes \mathcal{P} \longrightarrow
J_1(\mathcal{P}) \longrightarrow \mathcal{P} \longrightarrow 0, $$
we can compute the total Chern class of the jet bundle
$$c_t(J_1(\mathcal{P})^{\vee})^{-1}=\Bigl(\sum_{j\geq
0}(17\eta+\gamma)^j\Bigr)\cdot \Bigl(\sum_{j\geq
0}(57\eta+\gamma)^j\Bigr)=1-6\eta \theta +74\eta +2\gamma, $$ which
quickly leads to the formula for $[X]$. To compute $[Y]$ we proceed
in a similar way. We denote by $p_1, p_2:C\times C\times
\mbox{Pic}^{17}(C)\rightarrow C\times \mbox{Pic}^{17}(C)$ the two
projections, by $\Delta\subset C\times C\times \mbox{Pic}^{17}(C)$
the diagonal  and we set $\Gamma_q:=\{q\}\times \mbox{Pic}^{17}(C)$.
We introduce the rank $2$ vector bundle
$\cB:=(p_1)_*\bigl(p_2^*(\mathcal{P})\otimes
\OO_{\Delta+p_2^*(\Gamma_q)}\bigr)$ defined over $C\times
W^2_{17}(C)$ and we note that there is a bundle morphism $\chi:
\cB^{\vee}\rightarrow (\pi_2)^*(\cM)^{\vee}$ such that
$Y=Z_1(\chi)$. Since we also have that
$$c_t(\cB^{\vee})^{-1}=\bigl(1+(17\eta+\gamma)+(17\eta+\gamma)^2+\cdots\bigr)(1-\eta),$$
we immediately obtained the desired expression for $[Y]$.
\end{proof}
The next results are simple applications of Grothendieck-Riemann-Roch for
the projection morphism $p_2:C\times C\times
\mathrm{Pic}^{17}(C)\rightarrow C \times \mathrm{Pic}^{17}(C)$:

\begin{proposition}\label{a121}
Let $[C]\in \cM_{21}$ and denote by $p_1, p_2:C\times C\times
\rm{Pic}$$^{17}(C)\rightarrow C \times \rm{Pic}$$^{17}(C)$ the
natural projections. We denote by $\cA_2$ the vector bundle on
$C\times \rm{Pic}$$^{17}(C)$ with fibre at each point $\cA_2(y,
M)=H^0(C, K_C^{\otimes 2}\otimes M^{\otimes (-2)}\otimes
\OO_C(2y))$. We have the following formulas:
$$ c_1(\cA_2)=-4\theta-4\gamma-28\eta \ \mbox{ and }
c_2(\cA_2)=8\theta^2+104 \eta \theta+16 \gamma \theta.$$
\end{proposition}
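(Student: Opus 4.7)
The plan is to realize $\cA_2$ as a push-forward of a line bundle under $p_2$ and then apply Grothendieck-Riemann-Roch. Fix the convention $p_2(y_1,y_2,M)=(y_2,M)$, so that the fibre of $p_2$ over $(y,M)\in C\times \mathrm{Pic}^{17}(C)$ is a copy of $C$ parametrised by the first coordinate $y_1$. Denote by $\tau_1,\tau_2,\tau_3$ the projections of the triple product onto its factors and by $\tau_{ij}$ those onto pairs of factors, and consider the line bundle
$$
\L:=\tau_1^*(K_C)^{\otimes 2}\otimes \tau_{13}^*(\P)^{\otimes(-2)}\otimes \OO(2\Delta_{12}),
$$
where $\Delta_{12}\subset C\times C\times \mathrm{Pic}^{17}(C)$ is the pull-back of the diagonal from the first two factors. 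A routine check on fibres shows that $\cA_2=(p_2)_*(\L)$. Since the restriction of $\L$ to every fibre of $p_2$ has degree $2(2g-2)-34+2=48>2g-2$, the higher direct image $R^1(p_2)_*(\L)$ vanishes and Grothendieck-Riemann-Roch yields
$$
\mathrm{ch}(\cA_2)=(p_2)_*\Bigl(\mathrm{ch}(\L)\cdot \mathrm{td}(T_{p_2})\Bigr).
$$

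Writing $\eta_i:=\tau_i^*([\mathrm{pt}])$, $\theta_3:=\tau_3^*(\theta)$ and $\gamma_{13}:=\tau_{13}^*(\gamma)$, and recalling that $c_1(\P)=17\eta+\gamma$, one has
$$
c_1(\L)=46\,\eta_1-2\gamma_{13}+2[\Delta_{12}],\qquad \mathrm{td}(T_{p_2})=1-20\,\eta_1,
$$
the second equality being a consequence of $T_{p_2}=\tau_1^*(T_C)$ and $\eta_1^2=0$. The computation now rests on the following intersection identities on the triple product: $\eta_1\cdot \gamma_{13}=0$ (from $\eta\gamma=0$), $\gamma_{13}^{\,2}=-2\eta_1\theta_3$ (from $\gamma^2=-2\eta\theta$), and the self-intersection $[\Delta_{12}]^2=(2-2g)\,\eta_1[\Delta_{12}]=-40\,\eta_1[\Delta_{12}]$ (since the normal bundle of $\Delta_{12}$ in the triple product is $\tau_1^*(T_C)|_{\Delta_{12}}$). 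On push-forward one uses $(p_2)_*(\eta_1)=1$, the projection formula $(p_2)_*(\eta_1\theta_3)=\theta$, together with the identities $(p_2)_*(\eta_1[\Delta_{12}])=\eta$ and $(p_2)_*(\gamma_{13}[\Delta_{12}])=\gamma$, which come from the canonical identification $\Delta_{12}\cong C\times \mathrm{Pic}^{17}(C)$ and the fact that $\tau_{13}$ restricted to $\Delta_{12}$ is this identification.

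The remaining work is bookkeeping. Expanding $\mathrm{ch}(\L)\cdot \mathrm{td}(T_{p_2})$ and reading off the degree-$2$ part yields
$$
-4\,\eta_1\theta_3-28\,\eta_1[\Delta_{12}]-4\,\gamma_{13}[\Delta_{12}],
$$
which pushes down to $c_1(\cA_2)=-4\theta-4\gamma-28\eta$, matching the first claim. The degree-$3$ component of $\mathrm{ch}(\L)\cdot \mathrm{td}(T_{p_2})$ is computed analogously via the multinomial expansion of $c_1(\L)^3$, using in addition the vanishing $[\Delta_{12}]^3=1600\,\eta_1^2\,[\Delta_{12}]=0$; its push-forward gives $\mathrm{ch}_2(\cA_2)$, and then $\mathrm{ch}_2(\cA_2)=\tfrac{1}{2}\bigl(c_1(\cA_2)^2-2c_2(\cA_2)\bigr)$ recovers the formula for $c_2(\cA_2)$. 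The only substantive obstacle is keeping sign conventions consistent throughout, in particular for $[\Delta_{12}]^2$ and for the interactions of $\gamma_{13}$ with $[\Delta_{12}]$ on the triple product; once these are pinned down, both formulas drop out mechanically.
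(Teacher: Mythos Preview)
Your proposal is correct and follows exactly the approach indicated in the paper, which states only that the result is a ``simple application of Grothendieck-Riemann-Roch for the projection morphism $p_2:C\times C\times \mathrm{Pic}^{17}(C)\rightarrow C\times \mathrm{Pic}^{17}(C)$'' without further detail. Your setup of the line bundle $\L$, the identification of the relative Todd class, the self-intersection formula $[\Delta_{12}]^2=-40\,\eta_1[\Delta_{12}]$, and the push-forward identities are all correct, and your expansion of the degree-$2$ part of $\mathrm{ch}(\L)\cdot\mathrm{td}(T_{p_2})$ checks out line by line.
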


\begin{proposition}\label{a120}
Let $[C, q]\in \cM_{21, 1}$ be a general pointed curve an we denote
by $\cB_2$ the vector bundle on $C\times \rm{Pic}$$^{17}(C)$ having
fibre $\cB_2(y, M)=H^0\bigl(C, K_C^{\otimes 2}\otimes M^{\otimes
(-2)}\otimes \OO_C(y+q)\bigr)$ at each point $(y, M)\in C\times
\rm{Pic}$$^{17}(C)$. Then we have that:
$$c_1(\cB_2)=-4\theta+7\eta-2\gamma \ \mbox{  and }
c_2(\cB_2)=8\theta^2-28\eta \theta+8 \theta \gamma.$$
\end{proposition}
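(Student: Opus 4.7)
The plan is to realize $\cB_2$ as a derived push-forward on $C \times C \times \mathrm{Pic}^{17}(C)$ and apply Grothendieck--Riemann--Roch, in complete parallel with the proof of Proposition \ref{a121}. Let $\P$ denote a Poincar\'e line bundle on $C \times \mathrm{Pic}^{17}(C)$, write $\Delta \subset C \times C \times \mathrm{Pic}^{17}(C)$ for the diagonal of the two $C$-factors, and let $p_1, p_2$ be the projections forgetting the first and the second $C$-factor respectively. Setting
$$\F := p_2^*\bigl(K_C^{\otimes 2} \otimes \P^{\otimes (-2)}\bigr) \otimes \OO\bigl(\Delta + p_2^*(\Gamma_q)\bigr),$$
the restriction of $\F$ to the fibre of $p_1$ over $(y, M)$ is the degree-$48$ line bundle $K_C^{\otimes 2} \otimes M^{\otimes (-2)} \otimes \OO_C(y+q)$; since $48 > 2g-2 = 40$ one has $R^1(p_1)_*\F = 0$, and therefore $\cB_2 = (p_1)_* \F$ by base change.

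The next step is to compute
$$\mathrm{ch}(\cB_2) = (p_1)_* \bigl( e^{c_1(\F)} \cdot \mathrm{Td}(T_{p_1}) \bigr),$$
using $c_1(T_{p_1}) = -\pi_1^*(K_C) = -40 \eta_1$, where $\pi_1$ is the projection onto the first $C$-factor. Writing $\eta_1, \gamma_1, \theta_3$ for the evident pull-backs to the triple product, the four tensor factors of $\F$ contribute $80\eta_1$, $-34\eta_1 - 2\gamma_1$, $\eta_1$, and $[\Delta]$ respectively, giving
$$c_1(\F) = 47\eta_1 - 2\gamma_1 + [\Delta].$$
Expanding $e^{c_1(\F)} \cdot \mathrm{Td}(T_{p_1})$ in graded pieces and pushing down by $p_1$ then yields $c_1(\cB_2)$ from the degree-two piece and $\mathrm{ch}_2(\cB_2)$ from the degree-three piece, after which $c_2(\cB_2)$ follows from the identity $c_2(\cB_2) = \tfrac{1}{2} c_1(\cB_2)^2 - \mathrm{ch}_2(\cB_2)$.

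The remaining work amounts to intersection-theoretic bookkeeping using the standard relations $\eta_1^2 = 0$, $\eta_1 \gamma_1 = 0$, and $\gamma_1^2 = -2\eta_1 \theta_3$, the self-intersection $[\Delta]^2 = -(2g-2)\eta_1 \eta_2 = -40 \eta_1 \eta_2$ coming from the adjunction $c_1(N_{\Delta/(C \times C)}) = c_1(T_C)|_\Delta$, and the push-forward identities $(p_1)_*(\eta_1) = 1$, $(p_1)_*(\eta_1 \eta_2) = \eta$, $(p_1)_*(\eta_1 \theta_3 [\Delta]) = \eta \theta$, $(p_1)_*(\gamma_1 [\Delta]) = \gamma$, and $(p_1)_*(\eta_1 \eta_2 [\Delta]) = 0$ (the last because $\eta^2 = 0$ on $C \times \mathrm{Pic}^{17}(C)$). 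The only substantive deviation from the computation of $c_1(\cA_2)$ and $c_2(\cA_2)$ is the mixed contribution from $[\Delta] \cdot p_2^*(\Gamma_q)$, supported transversally on $\{(q,q)\} \times \mathrm{Pic}^{17}(C)$, which accounts for the shift in numerical coefficients between the two propositions. The principal obstacle is disciplined bookkeeping across the three layers of the GRR expansion; no new conceptual ingredient is required beyond what is already used for $\cA_2$.
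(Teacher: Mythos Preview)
Your proposal is correct and follows exactly the approach the paper indicates: the paper states that this proposition (together with Proposition \ref{a121}) is a ``simple application of Grothendieck--Riemann--Roch for the projection morphism $p_2:C\times C\times \mathrm{Pic}^{17}(C)\rightarrow C \times \mathrm{Pic}^{17}(C)$'', and you have carried this out in detail. The only difference is notational (you push forward along $p_1$ rather than $p_2$), and you have in fact supplied more of the computation than the paper does.
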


As a first step towards computing $[\overline{\mathfrak{D}}_{22}]$
we determine the $\delta_1$ coefficient in its expression:
\begin{theorem}\label{d1}
Let $[C]\in \cM_{21}$ be Brill-Noether general and denote by
$C^1\subset \Delta_1$ the associated test curve. Then
$\sigma^*(C^1)\cdot c_2(\F- \rm{Sym}$$^2(\E))=4847375988$. It
follows that the coefficient of $\delta_1$ in the expansion of
$\overline{\mathfrak{D}}_{22}$ is equal to $b_1=731180268$.
\end{theorem}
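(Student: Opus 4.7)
The plan is to decompose the intersection $\sigma^*(C^1) \cdot c_2(\F - \mathrm{Sym}^2 \E)$ according to Proposition \ref{limitlin1}, compute the contribution of each of the pieces $X,\ X_1 \times X_2,\ \Gamma_0 \times Z_0,\ n_1 Z_1,\ n_2 Z_2,\ n_3 Z_3$ separately, sum them to obtain $4847375988$, and then read off $b_1$ from the intersection numbers of $C^1$ with the generators of $\mathrm{Pic}(\rem_{22})$. The first preparatory step is to extend $\E$ and $\F$ globally over $\widetilde{\mathfrak{G}}^6_{25}$ in a way that realises the fibrewise descriptions of Proposition 7.5 by honest exact sequences: over the cuspidal locus in $\sigma^{-1}(\Delta_1^p)$ the bundle $\F$ fits into
\begin{equation*}
0 \longrightarrow \F' \longrightarrow \F \longrightarrow \OO(u^2) \longrightarrow 0,
\end{equation*}
with $\F'$ having fibres $H^0(C, L_C^{\otimes 2}(-2y))$, and analogous sequences hold on each remaining component. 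Each such $\F'$ is constructed as a pushforward of a twist of $\mathcal{L}^{\otimes 2}$ from the universal curve, in the spirit of $\cA_2$ and $\cB_2$ in Propositions \ref{a121}, \ref{a120}.

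For the dominant contribution, the embedding $X \hookrightarrow C \times W^2_{17}(C)$ via the residual map $(y, L) \mapsto (y, K_C \otimes L^\vee \otimes \OO_C(2y))$ lets one write $c_1(\E), c_2(\E), c_1(\F), c_2(\F)$ on $X$ as polynomials in $\eta, \gamma, \theta$ and in the Chern classes $c_1, c_2, c_3$ of the dual tautological bundle $\cM^\vee$. The Whitney formula
\begin{equation*}
c_2(\F - \mathrm{Sym}^2 \E) = c_2(\F) - c_1(\F) c_1(\mathrm{Sym}^2 \E) + c_1(\mathrm{Sym}^2 \E)^2 - c_2(\mathrm{Sym}^2 \E),
\end{equation*}
combined with the splitting-principle identities $c_1(\mathrm{Sym}^2 \E) = 8 c_1(\E)$ and $c_2(\mathrm{Sym}^2 \E) = 27 c_1(\E)^2 + 9 c_2(\E)$ specific to $\mathrm{rank}(\E) = 7$, then reduces the contribution of $X$ to an intersection number on $W^2_{17}(C)$, evaluated using the class of $[X]$ from Proposition \ref{xy} and the Vandermonde-type table of Lemma \ref{vandermonde}.

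The remaining components are simpler: each of $X_1 \times X_2$, $\Gamma_0 \times Z_0$, and the $Z_i$ is built out of a Jacobian-curve factor and a projective-bundle factor attached to the limit linear series on the elliptic tail, and on each piece $c_2(\F - \mathrm{Sym}^2 \E)$ simplifies to a degree-two polynomial whose integral is finite and elementary. The multiplicities $n_0 = \# W^6_{24}(C)$ and $n_1, n_2, n_3$ are evaluated as excess-ramification Porteous counts on $W^6_{25}(C)$ using the Harris-Tu / Eisenbud-Harris framework from \cite{F5}. Once the sum of all six contributions is shown to equal $4847375988$, the projection formula $\sigma^*(C^1) \cdot c_2(\F - \mathrm{Sym}^2 \E) = C^1 \cdot [\overline{\mathfrak{D}}_{22}]$ (weighted by $\mathrm{deg}(\sigma|_{\mathcal{U}_{22}})$) combined with the test-curve table $C^1 \cdot \lambda = 0,\ C^1 \cdot \delta_j = 0$ for $j \neq 1,\ C^1 \cdot \delta_1 = -2g + 4 = -40$ determines the boundary coefficient as $b_1 = 731180268$.

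The main obstacle will be the bookkeeping of the boundary extensions of $\F$ across each stratum: a wrong choice of twist by the exceptional divisors analogous to $E_A, E_B$ in Proposition \ref{limitlin0} would shift each contribution by a multiple of the diagonal class and corrupt the final number. This is the step that parallels the boundary analysis of \cite{F2} Section 3 for $\rho = 0$ Koszul bundles; here, because $\rho(22, 6, 25) = 1$, the test cycle is two-dimensional and one is forced to integrate over surfaces rather than curves, which is precisely why the classes of $X$ and $Y$ in Proposition \ref{xy} and the Chern-class formulas of Propositions \ref{a121}, \ref{a120} are the technical heart of the computation.
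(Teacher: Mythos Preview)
Your overall architecture matches the paper's: decompose $\sigma^*(C^1)$ via Proposition \ref{limitlin1}, compute $c_2(\F-\mathrm{Sym}^2\E)$ on each piece, then read off $b_1$ from the test-curve intersection numbers. However, you miss the key structural simplification that makes the computation feasible. Because $\E$ and $\F$ are defined (Proposition 7.5) by retaining only the genus-$21$ aspect of each limit $\mathfrak g^6_{25}$ and discarding the aspect on the elliptic tail, the restrictions $\E_{|Z_i}$ and $\F_{|Z_i}$ are \emph{trivial} for $i=1,2,3$, so those three surfaces contribute zero; likewise on $X_1\times X_2$ and on $\Gamma_0\times Z_0$ the bundles are pulled back from a cycle of dimension $\le 1$, so $c_2$ integrates to zero for dimensional reasons. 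Hence only $X$ contributes, and the constants $n_1,n_2,n_3$ never enter. Your plan to evaluate the $n_i$ as Porteous counts and then integrate ``finite and elementary'' nonzero classes over $Z_i$, $X_1\times X_2$, $\Gamma_0\times Z_0$ is unnecessary and signals that you have not yet seen why those pieces vanish.

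On the surviving piece $X$ your sketch is in the right direction but underspecified. The paper does not simply write $c_i(\E), c_i(\F)$ as polynomials in $\eta,\gamma,\theta,c_j$; it introduces the tautological line bundle $U=\mathrm{Ker}\bigl(J_1(\P)^\vee\to\pi_2^*(\cM)^\vee\bigr)$ on $X$, with fibre $H^0(L)/H^0(L(-2y))$, and establishes the two exact sequences
\[
0\to \cA_{2|X}\to \F_{|X}\to U^{\otimes 2}\to 0,\qquad
0\to \pi_2^*\bigl(R^1\pi_{2*}\P\bigr)^\vee_{|X}\to \E_{|X}\to U\to 0.
\]
These, together with the Harris--Tu formulas for $c_1(U)$ and $c_1(U)^2$ as excess classes of the Porteous locus $X$, are what actually reduce $c_2(\F-\mathrm{Sym}^2\E)\cdot[X]$ to the monomials in $\eta,\theta,\pi_2^*(c_j)$ that Lemma \ref{vandermonde} evaluates. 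Your exact sequence with $\F'$ and $\OO(u^2)$ is a gesture towards the first of these, but the role of $U$ (and its appearance in both $\E$ and $\F$) is the missing ingredient. Finally, there is no weighting by $\deg(\sigma_{|\mathcal U_{22}})$: since $[\overline{\mathfrak D}_{22}]=\sigma_*c_2(\F-\mathrm{Sym}^2\E)$ by construction, the projection formula gives $C^1\cdot[\overline{\mathfrak D}_{22}]=\sigma^*(C^1)\cdot c_2(\F-\mathrm{Sym}^2\E)$ directly.
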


\begin{proof} We intersect the degeneracy locus of the map
$\mbox{Sym}^2(\E)\rightarrow \F$ with the surface $\sigma^*(C^1)$
and use that the vector bundles $\E$ and $\F$ were defined by
retaining the sections of the genus $21$ aspect of each limit linear
series and dropping the information coming from the elliptic curve.
It follows that $Z_i\cdot c_2(\F-\mbox{Sym}^2(\E))=0$ for $1\leq
i\leq 3$ (since $\F$ and $\mbox{Sym}^2(\E))$ are both trivial along the
surfaces $Z_i$), and $[X_1\times X_2]\cdot
c_2(\F-\mbox{Sym}^2(\E))=0$ (because $c_2(\F-\mbox{Sym}^2(\E))_{|
X_1\times X_2}$ is in fact the pull-back of a codimension $2$ class
from the $1$-dimensional cycle $X_1$, therefore the intersection
number is $0$ for dimensional reasons). We are left with estimating
the contribution coming from $X$ and write that
$$\sigma^*(C^1)\cdot
c_2(\F-\mbox{Sym}^2(\E))=c_2(\F_{|X})-c_1(\F_{|X})c_1(\mbox{Sym}^2
\E_{| X})+c_1^2(\mbox{Sym}^2 \E_{| X})-c_2(\mbox{Sym}^2 \E_{|X}).$$
We are going to compute separately each term in the right-hand-side of this
expression.

The surface $X$ appears as the first degeneracy locus of a vector
bundle morphism $\zeta: J_1(\P)^{\vee}\rightarrow
\pi_2^*(\cM)^{\vee}$ which globalizes the maps
$$H^0(C, M\otimes
\OO_{2y})^{\vee}\rightarrow H^0(C, M)^{\vee}$$
for all $(y, M)\in C\times W^2_{17}(C)$. We denote by $U:=\mbox{Ker}(\zeta)$. In
other words, $U$ is a line bundle on $X$ with  fibre $$U(y,
M)=\frac{H^1(C, M\otimes \OO_C(-2y))^{\vee}}{H^1(C,
M)^{\vee}}=\frac{H^0(C, L)}{H^0(C, L\otimes\OO_C(-2y))}$$ over a
point $(y, M)\in X$. The Chern class of $U$ can be computed from the
Harris-Tu formula:
$$c_1(U)\cdot \xi_{| X}=-c_3(\pi_2^*(\cM)^{\vee}-J_1(\P)^{\vee})\cdot \xi_{|X}=-(\pi_2^*(c_3)-6\eta \theta \pi_2^*(c_1)
+(74\eta+2\gamma)\pi_2^*(c_2))\cdot \xi_{|X},$$ for any class
$\xi\in H^2(C\times W^2_{17}(C))$, and
$$c_1^2(U)=c_4(\pi_2^*(\cM)^{\vee}-J_1(\P)^{\vee})=\pi_2^*(c_3)(74\eta+2\gamma)-6\pi_2^*(c_2)\eta \theta.$$
If $\cA_3$ denotes the rank $30$ vector bundle on $X$ having fibres
$$\cA_3(y, M)=H^0(C, L^{\otimes 2})=H^0(C, K_C^{\otimes 2}\otimes
M^{\otimes (-2)}\otimes \OO_C(4y)),$$ then there is an injective
bundle morphism $U^{\otimes 2}\hookrightarrow \cA_3/\cA_2$ and we
consider the quotient sheaf $$\G:=\frac{\cA_3/\cA_2}{U^{\otimes
2}}$$ We note that since the morphism $U^{\otimes 2}\rightarrow
\cA_3/\cA_2$ vanishes along the curve $\Gamma_0$ corresponding to
pairs $(y, M)$ where $M$ has a base point, $\G$ has torsion along
$\Gamma_0$. A straightforward local analysis now shows that
$\F_{|X}$ can be identified as a subsheaf of $\cA_3$ with the kernel
of the map $\cA_3\rightarrow \G$. Therefore, there is an exact
sequence of vector bundles on $X$
$$0\longrightarrow \cA_{2 |X}\longrightarrow \F_{| X}\longrightarrow
U^{\otimes 2}\longrightarrow 0,$$ which over a generic point of $X$
corresponds to the decomposition
$$\F(y, M)=H^0(C, L^{\otimes 2}\otimes \OO_C(-2y))\oplus \mathbb
C\cdot u^2,$$ where $u \in H^0(C, L)$ is such that
$\mbox{ord}_y(u)=1$ (The analysis above, shows that the sequence stays exact over $\Gamma_0$ as well).  Hence
$$c_1(\F_{| X})=c_1(\cA_{2 |X})+2c_1(U)$$
and $c_2(\F_{| X})=c_2(\cA_{2 |X})+2c_1(\cA_{2 |X}) c_1(U)$. Furthermore, we note that the
vector bundle $\pi_2^*\bigl(R^1\pi_{2
*}(\P)\bigr)_{| X}^{\vee}$ is a subbundle of $\E_{|X}$ and we have
an exact sequence
$$0\longrightarrow \pi_2^*\bigl(R^1\pi_{2
*}(\P)\bigr)_{| X}^{\vee}\longrightarrow \E_{|X}\longrightarrow U\longrightarrow
0$$ from which we find that
$c_1(\E_{|X})=-\theta+\pi_2^*(c_1)+c_1(U)$. Similarly, we have that
\begin{equation}\label{c2l}
c_2(\E_{|X})=\frac{\theta^2}{2}+\pi_2^*(c_2)-\theta
\pi_2^*(c_1)-c_1(U)\pi_2^*(c_1)-\theta c_1(U).
\end{equation}
It is elementary to check that $c_1(\mbox{Sym}^2 \E_{|
X})=8 \ c_1(\E_{|X})$ and that $$c_2(\mbox{Sym}^2
\E_{|X})=27\ c_1^2(\E_{|X})+9\ c_2(\E_{|X}),$$ therefore we obtain that
$$
\sigma^*(C^1)\cdot c_2(\F-\mbox{Sym}^2(\E))=c_2(\cA_{2
|X})+c_1(\cA_{2 |X})c_1(U^{\otimes 2})-$$ $$-8c_1(\cA_{2
|X})c_1(\E_{|X})-8c_1(\E_{| X})c_1(U^{\otimes 2})+ 37c_1^2(\E_{|
X})-9c_2(\E_{|X})=
$$
$$=\Bigl(-120\ \eta \theta+\frac{17}{2} \theta^2-16\ \theta
\gamma-9\ \pi_2^*(c_2)+(224\ \eta+32\ \gamma-33\ \theta) \pi_2^*(c_1) +37
\pi_2^*(c_1^2)\Bigr)\cdot [X]+$$ $$+(168\ \eta+24\ \gamma-25
\ \theta+49\ \pi_2^*(c_1))\cdot c_1(U)+21c_1^2(U)=$$
$$=1754\ \eta \theta \pi_2^*(c_2)+1386\ \eta \pi_2^*(c_3)-2498\ \eta
\theta \pi_2^*(c_1^2)+741\ \eta \theta^2 \pi_2^*(c_1)-4068\ \eta
\pi_2^*(c_1)\pi_2^*(c_2)-$$ $$-51\ \eta \theta^3+2738\ \eta
\pi_2^*(c_1^3),$$ where the last expression lives inside
$H^4(C\times W^2_{17}(C))$. Using \cite{F5} Propositions 2.6, each term in this sum
is evaluated and we find that $$\sigma^*(C^1)\cdot
c_2(\F-\mbox{Sym}^2(\E))=691 \ \theta^{21}/1207084032000,$$ which
implies the stated formula for $b_1$.
\end{proof}

\begin{theorem}\label{elltail}
Let $[C, q]\in \cM_{21, 1}$ be a suitably general pointed curve and
$L\in W^6_{25}(C)$ a linear series with a cusp at $q$. Then the
multiplication map $$\mathrm{Sym}^2 H^0(C, L)\rightarrow H^0(C,
L^{\otimes 2})$$ is injective. It follows that we have the relation
$a-12b_0+b_1=0$.
\end{theorem}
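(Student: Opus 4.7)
The plan is to derive the identity $a - 12 b_0 + b_1 = 0$ from the injectivity statement via a test-curve calculation on $\mm_{22}$, and to establish the injectivity by semicontinuity after producing one explicit example. Let $R \subset \Delta_1 \subset \mm_{22}$ denote the pencil obtained by attaching a Lefschetz pencil of plane cubics at the fixed marked point $q$ of $[C, q]$. Since $R \cdot \lambda = 1$, $R \cdot \delta_0 = 12$, $R \cdot \delta_1 = -1$ and $R \cdot \delta_j = 0$ for $j \geq 2$, one has
$$R \cdot \overline{\mathfrak{D}}_{22} = a - 12 b_0 + b_1,$$
so it suffices to show this intersection vanishes. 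Pulling back along $\sigma: \widetilde{\mathfrak{G}}^6_{25} \to \ttem_{22}$, every refined limit $\mathfrak{g}^6_{25}$ on a curve $[C \cup_q E_t]$ of the pencil contributing dimensionally to $\sigma^*(R)$ consists of a $C$-aspect $l_C = |L|$ with $L \in W^6_{25}(C)$ having vanishing sequence $(0, 2, 3, 4, 5, 6, 7)$ at $q$---i.e., $L$ cuspidal at $q$---together with an $E_t$-aspect uniquely determined by the complementary vanishing sequence $(18, 19, 20, 21, 22, 23, 25)$ at the node. Other profiles either yield a negative adjusted $\rho(21,6,25) - \sum \alpha_i$, force $E$-side ramification exceeding $\rho(1, 6, 25) = 127$, or contribute in strictly lower dimension. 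The cuspidal $L$'s trace out a curve $B \subset W^6_{25}(C)$ of dimension $\rho(21, 6, 25) - 6 = 1$, and $\sigma^*(R)$ fibers generically over $B$ with fibers equal to the pencil.

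By the construction of $\E$ and $\F$ on $\widetilde{\mathfrak{G}}^6_{25}$ along $\Delta_1^p$ (and along $\Delta_0^p$ at the twelve nodal fibers), their fibers depend only on the $C$-aspect $L$ and on the fixed point $q$---not on the elliptic component. Hence $\E_{|\sigma^*(R)}$ and $\F_{|\sigma^*(R)}$ are pullbacks from the curve $B$, and every polynomial of degree $\geq 2$ in their Chern classes vanishes on $\sigma^*(R)$. In particular, $c_2(\F - \mathrm{Sym}^2 \E) \cdot \sigma^*(R) = 0$, and provided $\sigma^*(R) \not\subset Z_1(\phi)$, the virtual pushforward formula gives $R \cdot [\overline{\mathfrak{D}}_{22}] = 0$, which is the desired identity.

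The substantive content is therefore the injectivity itself, which ensures that $\sigma^*(R)$ is not swallowed by $Z_1(\phi)$. The source of the multiplication map has dimension $\binom{8}{2} = 28$ while the target has dimension $h^0(L^{\otimes 2}) = 2\cdot 25 + 1 - 21 = 30$ by Riemann-Roch ($L^{\otimes 2}$ being non-special), so a $2$-dimensional cokernel with trivial kernel is the generic expectation. I would prove injectivity by semicontinuity on the irreducible universal family of cuspidal triples $(C, q, L)$---a Schubert stratum in the relative Picard scheme over the Brill-Noether locus---so that it suffices to exhibit a single example. The main obstacle will be this explicit verification; the most direct route I see is to construct a smooth curve $C \stackrel{|L|}\hookrightarrow \PP^6$ of degree $25$ and genus $21$ in an appropriate component of its Hilbert scheme via a Macaulay2 computation in the spirit of the constructions in Section 2, locate a point $q \in C$ realizing $h^0(C, L(-2q)) = 6$, and verify by computer that $C$ is not contained in any quadric hypersurface in $\PP^6$. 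Granted this, semicontinuity propagates injectivity to the generic cuspidal triple, and the test-curve calculation above completes the proof of the stated identity.
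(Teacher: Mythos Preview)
Your approach to the relation $a - 12b_0 + b_1 = 0$ is essentially identical to the paper's: you use the same test pencil $R$ of plane cubics glued at $q$, the same intersection numbers $R\cdot\lambda=1$, $R\cdot\delta_0=12$, $R\cdot\delta_1=-1$, and the same key observation that $\E$ and $\F$ restricted to $\sigma^*(R)$ depend only on the $C$-aspect $(C,q,L)$ and not on the elliptic tail, so that they are pulled back from the curve $B$ of cuspidal $\mathfrak g^6_{25}$'s on the fixed pair $(C,q)$ and hence $c_2(\F - \mathrm{Sym}^2\E)$ vanishes on $\sigma^*(R)$. Your exposition is considerably more detailed than the paper's one-paragraph sketch, in particular your analysis of the admissible vanishing profiles on the limit linear series side, but the substance is the same.

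One clarification: the proviso ``provided $\sigma^*(R) \not\subset Z_1(\phi)$'' is not actually needed for the relation. The coefficients $a, b_0, b_1$ are those of the \emph{virtual} class $\sigma_* c_2(\F - \mathrm{Sym}^2\E)$, and the projection-formula identity
\[
R\cdot\sigma_* c_2(\F-\mathrm{Sym}^2\E) \;=\; \sigma^*(R)\cdot c_2(\F-\mathrm{Sym}^2\E)
\]
holds unconditionally as an equality of Chern numbers. So the vanishing you established already yields $a - 12b_0 + b_1 = 0$, independently of whether the generic cuspidal $L$ has injective multiplication map. This is precisely how the paper argues.

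Regarding the injectivity statement itself: the paper's proof as written does \emph{not} address it---the paragraph given proves only the relation, and the transversality questions (both this one and the stronger statement that $\mathfrak{D}_{22}$ is an honest divisor on $\cM_{22}$) are explicitly deferred to the forthcoming paper \cite{F4}. Your proposed strategy of semicontinuity on the irreducible parameter space of cuspidal triples plus an explicit Macaulay2 verification is a reasonable path, and is in the spirit of the computer-aided verifications in Section~2 and in \cite{F2}, \cite{F3}; it is additional content beyond what the paper supplies here.
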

\begin{proof} We consider the pencil $R\subset \mm_g$ obtained
by attaching to $C$ at the point $q$ a pencil of plane cubics. It is
well-known that $R\cdot \lambda=1, R\cdot \delta_0=12$ and $R\cdot
\delta_1=-1$, thus the relation $a-12b_0+b_1=0$ would be immediate
once we show that $R\cdot c_2(\F-\mathrm{Sym}^2(\E))=0$.
This follows because of the way the vector bundles $\E$ and $\F$ are defined over the boundary divisor
$\Delta_1^0$ of $\widetilde{\cM}_{22}$, by retaining the aspect of the limit linear series of the component
of genus $21$ and dropping the aspect of the elliptic component.
\end{proof}

\begin{theorem}\label{d0}
Let $[C, q]\in \cM_{21, 1}$ be a Brill-Noether general pointed curve
and denote by $C^0\subset \Delta_0$ the associated test curve. Then
$\sigma^*(C^0)\cdot c_2(\F-\mathrm{Sym}^2(\E))=42b_0-b_1=4847375988$.
It follows that $b_0=132822768$.
\end{theorem}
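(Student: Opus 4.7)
The plan is to mirror the strategy of Theorem \ref{d1}, applying it to the surface $\sigma^*(C^0)$ described in Proposition \ref{limitlin0} instead of to $\sigma^*(C^1)$. To begin, one checks via the push-pull formula that
$$\sigma^*(C^0)\cdot c_2(\F-\mathrm{Sym}^2\E)=C^0\cdot [\overline{\mathfrak{D}}_{22}]=-b_0\,(C^0\cdot\delta_0)-b_1\,(C^0\cdot\delta_1)=42b_0-b_1,$$
so once the left-hand side is evaluated to $4847375988$, the value $b_0=132822768$ follows immediately from Theorem \ref{d1}. Since $\sigma^*(C^0)\cong Y'/(\widetilde{\Gamma}_1\!\sim\!\widetilde{\Gamma}_2)$, the intersection number can be computed on the normalization $Y'$ (the classes $c_i(\F),c_i(\E)$ are pulled back through $\epsilon:Y'\to Y$ and then along $Y\hookrightarrow C\times\mathrm{Pic}^{17}(C)$), with local corrections arising only along $\widetilde{\Gamma}_1,\widetilde{\Gamma}_2$ and the exceptional divisors $E_A,E_B$.

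Next, I would set up exact sequences on $Y$ exactly parallel to those used for $X$ in Theorem \ref{d1}. The tautological line bundle $U$ on $Y$ with generic fibre $H^0(C,L)/H^0(C,L(-y-q))$ fits into
$$0\longrightarrow \pi_2^*\bigl(R^1\pi_{2*}\P\bigr)_{|Y}^{\vee}\longrightarrow \E_{|Y}\longrightarrow U\longrightarrow 0,$$
and the description of $\F$ in the partial compactification yields
$$0\longrightarrow \cB_{2|Y}\longrightarrow \F_{|Y}\longrightarrow U^{\otimes 2}\longrightarrow 0,$$
with $\cB_2$ the bundle whose Chern classes are given by Proposition \ref{a120}. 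Realizing $Y$ as the degeneracy locus of the bundle map $\chi:\cB^\vee\to\pi_2^*(\cM)^\vee$ introduced in the proof of Proposition \ref{xy}, one obtains $c_1(U)\cdot\xi=-c_3(\pi_2^*\cM^\vee-\cB^\vee)\cdot\xi$ (for $\xi\in H^2$) and $c_1^2(U)=c_4(\pi_2^*\cM^\vee-\cB^\vee)$ via Harris-Tu, so every class needed is expressible in the ring generated by $\eta,\gamma,\theta$ and $\pi_2^*c_i$.

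The bulk of the work is then the same Chern-polynomial bookkeeping as in Theorem \ref{d1}. Using the Whitney formulas $c_1(\mathrm{Sym}^2\E)=8\,c_1(\E)$ and $c_2(\mathrm{Sym}^2\E)=27\,c_1^2(\E)+9\,c_2(\E)$, one expands
$$c_2(\F-\mathrm{Sym}^2\E)_{|Y}=c_2(\cB_{2|Y})+2c_1(\cB_{2|Y})c_1(U)-8c_1(\cB_{2|Y})c_1(\E_{|Y})-8c_1(\E_{|Y})c_1(U^{\otimes 2})+37c_1^2(\E_{|Y})-9c_2(\E_{|Y}),$$
substitutes $c_1(\E_{|Y})=-\theta+\pi_2^*c_1+c_1(U)$ and the analogue of formula \eqref{c2l} for $c_2(\E_{|Y})$, and integrates against $[Y]\in H^4(C\times W^2_{17}(C))$ supplied by Proposition \ref{xy}. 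All resulting monomials in $\theta,\eta,\gamma$ and the Chern classes of $\cM$ are evaluated by the table of numbers in Lemma \ref{vandermonde} together with Proposition \ref{a120}; the numerical outcome should be $4847375988$.

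The principal obstacle, as in the corresponding step of Theorem \ref{d1}, is to verify that the switch from $Y$ to $Y'/(\widetilde{\Gamma}_1\!\sim\!\widetilde{\Gamma}_2)$ does not change the answer. Along $\Gamma_1,\Gamma_2\subset Y$ the linear series $L$ ceases to be locally free and the fibre of $\F$ is replaced by the image $H^0(A^{\otimes 2})\subset H^0(\nu^*L^{\otimes 2})$ with $L=\nu_*(A)$; at the cuspidal points $(q,B)\in Y$ a similar degeneration of the filtration occurs, which is why $Y$ must be blown up at these loci. Each such contribution is local and analogous to the $Z_i$-terms in the proof of Theorem \ref{d1}: a careful analysis of limit linear series on $C\cup_q E_\infty$ and on the non-locally-free torsion-free sheaves $\nu_*(A)$ shows that the vector bundles $\epsilon^*\E$ and $\epsilon^*\F$ are (after twisting by the exceptional divisors) pull-backs of bundles from the base, so that the Chern-class contribution from $E_A$, $E_B$, and the glued curves vanishes. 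Reconciling this vanishing with the Chern-polynomial computation above is the delicate bookkeeping needed to close the proof.
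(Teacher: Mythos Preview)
Your proposal is correct and follows essentially the same route as the paper: the same two exact sequences on $Y$ (the paper calls your line bundle $U$ by $V$), the same Harris--Tu input for $c_1(V)$ and $c_1^2(V)$, the same use of Proposition~\ref{a120} for $c_i(\cB_{2|Y})$, and the same Chern-polynomial expansion leading to a sum of monomials in $\eta,\theta,\gamma,\pi_2^*c_i$ evaluated via Lemma~\ref{vandermonde}.

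The one place where the paper is more efficient is your ``principal obstacle.'' The paper observes at the outset that $\E_{|\sigma^*(C^0)}$ and $\F_{|\sigma^*(C^0)}$ are pull-backs of vector bundles already defined on $Y$ (not merely on $Y'$). Once this is said, there is nothing to check: $\epsilon:Y'\to Y$ is birational, and gluing $\widetilde{\Gamma}_1$ to $\widetilde{\Gamma}_2$ does not affect top-degree intersection numbers of classes pulled back from $Y$, so the entire calculation is carried out on $Y$ with no local corrections at $E_A$, $E_B$, or along the $\widetilde{\Gamma}_i$. Your final paragraph is therefore unnecessary --- the delicate bookkeeping you anticipate simply does not arise.
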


\begin{proof} This time we look at the virtual degeneracy locus
of the morphism $\mathrm{Sym}^2(\E)\rightarrow \F$ along the surface
$\sigma^*(C^0)$. The first thing to note is that the vector bundles
$\E_{| \sigma^*(C^0)}$ and $\F_{| \sigma^*(C^0)}$ are both
pull-backs of vector bundles on $Y$. For convenience we denote this
vector bundles also by $\E$ and $\F$, hence to use the notation of
Proposition \ref{limitlin0}, $\E_{| \sigma^*(C^0)})=\epsilon^*(\E_{|
Y})$ and $\F_{| \sigma^*(C^0)}=\epsilon^*(\F_{| Y})$. We find that
$$\sigma^*(C^0)\cdot c_2(\F-\mbox{Sym}^2(\E))=c_2(\F_{|Y})-c_1(\F_{| Y})\cdot
c_1(\E_{| Y})+c_1^2(\E_{| Y})-c_2(\E_{|Y})$$ and like in the proof
of Theorem \ref{d1}, we are going to compute each term in this
expression. We denote by $V:=\mbox{Ker}(\chi)$, where $\chi:
\cB^{\vee}\rightarrow \pi_2^*(\cM)^{\vee}$ is the bundle
 morphism on $C\times W^2_{17}(C)$ whose degeneracy locus is $Y$
  and which globalizes all the maps $H^0(C, \OO_{y+q}(M))^{\vee}\rightarrow H^0(C, M)^{\vee}$.
  Thus the kernel bundle $V$ is a line bundle on $Y$ with  fibre
$$V(y, M)=\frac{H^0(C, L)}{H^0(C, L\otimes \OO_C(-y-q))},$$
over each point $(y, M)\in Y$, and where $L:=K_C\otimes
M^{\vee}\otimes \OO_C(y+q)\in W^6_{25}(C)$. By using again the
Harris-Tu Theorem, we find the following formulas for the Chern
numbers of $V$:
$$c_1(V)\cdot \xi_{|Y}=-(c_3(\pi_2^*(\cM)^{\vee}-\cB^{\vee})\cdot \xi_{| Y})=(\pi_2^*(c_3)+\pi_2^*(c_2)(16\eta+\gamma)-2\pi_2^*(c_1)\eta \theta)\cdot \xi_{| Y},$$
for any class $\xi\in H^2(C\times W^2_{17}(C))$, and
$$c_1^2(V)=c_4(\pi_2^*(\cM)^{\vee}-\cB^{\vee})=\pi_2^*(c_3)(16\eta+\gamma)-2\pi_2^*(c_2)\eta \theta.$$
Recall that we have introduced in Proposition \ref{a120} the rank $28$ vector bundle $\cB_2$ over
$C\times W^2_{17}(C)$ with fibre $\cB_2(y, M)=H^0(C, L^{\otimes
2}\otimes \OO_C(-y-q))$. We claim that one has an exact sequence of
bundles over $Y$
\begin{equation} \label{exseq} 0\longrightarrow
\cB_{2 |Y}\longrightarrow \F_{| Y}\longrightarrow V^{\otimes
2}\longrightarrow 0. \end{equation}
 If $\cB_3$ is the rank $30$
vector bundle on $Y$ with fibres
$$\cB_3(y, M)=H^0(C, L^{\otimes 2})=H^0\bigl(C, K_C^{\otimes 2}\otimes M^{\otimes (-2)}\otimes
\OO_C(2y+2q)\bigr),$$  we have an injective morphism of sheaves
$V^{\otimes 2} \hookrightarrow \cB_3/\cB_2$ locally given by
$$v^{\otimes 2}\mapsto v^2 \mbox{ mod } H^0(C, L^{\otimes 2}\otimes
\OO_C(-y-q)),$$ where $v\in H^0(C, L)$ is any section not vanishing
at $q$ and $y$. Then $\F_{|Y}$ is canonically identified with the
kernel of the projection morphism
$$\cB_3\rightarrow \frac{\cB_3/\cB_2}{V^{\otimes 2}}$$
and the exact sequence (\ref{exseq}) now becomes clear. Therefore
$c_1(\F_{| Y})=c_1(\cB_{2 |Y})+2c_1(V)$ and $c_2(\F_{|Y})=c_2(\cB_{2
|Y})+2c_1(\cB_{2|Y}) c_1(V)$. Reasoning along the lines of Theorem \ref{d1}, we also have
an exact sequence
$$0\longrightarrow \pi_2^*\bigl(R^1\pi_{2 *}(\P)\bigr)^{\vee}_{| Y}\longrightarrow \E_{|Y}\longrightarrow V\longrightarrow 0$$
and from this we obtain that
$$c_1(\E_{| Y})=-\theta+\pi_2^*(c_1)+c_1(V)$$
and
$$ c_2(\E_{| Y})=
\frac{\theta^2}{2}+\pi_2^*(c_2)-\theta \pi_2^*(c_1)-\theta c_1(V)+c_1(V) \pi_2^*(c_1).$$ All in all,
we can write the following expression for the total intersection
number:
$$\sigma^*(C^0)\cdot c_2(\F-\mbox{Sym}^2(\E))=c_2(\cB_{2| Y})+c_1(\cB_{2|Y})c_1(V^{\otimes 2})-$$
$$-
8c_1(\cB_{2 |Y})c_1(\E_{|Y})-8c_1(\E_{| Y})c_1(V^{\otimes 2})+37 c_1^2(\E_{|Y})-9c_2(\E_{| Y})=$$
$$=\Bigl(\frac{17}{2}\theta^2+28\eta \theta-8\theta \gamma-9\pi_2^*(c_2)+(16\gamma-33\theta-56\eta)\pi_2^*(c_1)+37\pi_2^*(c_1^2)\Bigr)\cdot [Y]+$$
$$+(49\ \pi_2^*(c_1)-25\ \theta-42\ \eta+12\ \gamma)c_1(V)+21 c_1^2(V)=$$
$$=428 \ \eta \theta \pi_2^*(c_2)-536\  \eta \theta \pi_2^*(c_1^2)+168
\  \eta \theta^2 \pi_2^*(c_1)-984 \ \eta \pi_2^*(c_1)
\pi_2^*(c_2)+$$ $$+378 \eta \pi_2^*(c_3)-17 \ \eta \theta^3 + 592
\eta \pi_2^*(c_1^3),$$ and using once more \cite{F5} Proposition 2.6,
we get that $$42 b_0-b_1=509 \theta^{21}/5364817920000.$$ Since we
already know the value of $b_1$ and $a-12b_0+b_1=0$, this allows us
to  calculate $a$ and $b_0$.
\end{proof}
\noindent
\emph{End of the proof of Theorem \ref{m22}}. We write
$\overline{\mathfrak{D}}_{22}\equiv a \lambda-\sum_{j=0}^{11} b_j
\delta_j$. Since $$\frac{a}{b_0}=\frac{17121}{2636}\leq \frac{71}{10},$$ we are in a position
to apply Corollary 1.2 from \cite{FP} which gives the inequalities
$b_j\geq b_0$ for $1\leq j\leq 11$, hence
$s(\overline{\mathfrak{D}}_{22})=a/b_0<13/2$.\hfill $\Box$

\end{document}